\documentclass[a4paper,centertags,oneside,12pt]{amsart}
\usepackage{hyperref}
\usepackage{mathrsfs}
\usepackage{appendix}
\usepackage{amssymb}
\usepackage{fancyhdr}
\usepackage{charter}
\usepackage{typearea}
\usepackage{pdfsync}
\usepackage{amsmath,amstext,amsthm,amscd}
\usepackage{mathrsfs}
\usepackage[a4paper,top=3cm,bottom=3cm,left=2cm,right=2cm]{geometry}

\usepackage{enumitem}
\setlist[1]{itemsep=5pt}
\newcommand{\comment}[1]{}
\usepackage{color}
\makeatletter
      \def\@setcopyright{}
      \def\serieslogo@{}
      \makeatother

\newcommand{\Complex}{\mathbb C}
\newcommand{\Real}{\mathbb R}
\newcommand{\N}{\mathbb N}
\newcommand{\Z}{\mathbb Z}

\newcommand{\CP}{\mathbb C\mathbb P}
\newcommand{\ddbar}{\overline\partial}
\newcommand{\pr}{\partial}
\newcommand{\ol}{\overline}
\newcommand{\Td}{\widetilde}
\newcommand{\norm}[1]{\left\Vert#1\right\Vert}
\newcommand{\abs}[1]{\left\vert#1\right\vert}

\newcommand{\set}[1]{\left\{#1\right\}}
\newcommand{\To}{\rightarrow}

\DeclareMathOperator{\supp}{supp}
\DeclareMathOperator{\dist}{dist}

\newcommand{\mU}{\mathcal{U}}
\newcommand{\mV}{\mathcal{V}}
 \def\cC{\mathscr{C}}

\theoremstyle{plain}
\newtheorem{thm}{Theorem}[section]

\newtheorem{lem}[thm]{Lemma}
\newtheorem{prop}[thm]{Proposition}

\theoremstyle{definition}
\newtheorem{defn}[thm]{Definition}

\theoremstyle{remark}
\newtheorem{rem}[thm]{Remark}

\numberwithin{equation}{section}




\begin{document}
\title[Szeg\H{o} kernel asymptotics and Kodaira embedding theorems]
{Szeg\H{o} kernel asymptotics and Kodaira embedding theorems of Levi-flat
CR manifolds}
\author{Chin-Yu Hsiao}
\address{Institute of Mathematics \\ Academia Sinica and National Center for
Theoretical Sciences\\ 6F,  Astronomy-Mathematics Building,
No.1, Sec.4, Roosevelt Road, Taipei 10617, Taiwan}

  \email{chsiao@math.sinica.edu.tw}

\author{George Marinescu}
\address{Univerisit\"at zu K\"oln, Mathematisches institut,
Weyertal 86-90, 50931 K\"oln, Germany 
\newline\mbox{\quad}\,Institute of Mathematics `Simion Stoilow', 
Romanian Academy, Bucharest, Romania}
\email{gmarines@math.uni-koeln.de}

\pagestyle{fancy}
\lhead{\itshape{Chin-Yu Hsiao \& George Marinescu}}
\rhead{\itshape{Szeg\H{o} kernel asymptotics and Kodaira embedding theorems }}
\cfoot{\thepage}

\begin{abstract}
Let $X$ be an orientable compact Levi-flat CR manifold 
and let $L$ be a positive CR complex line bundle over $X$. 
We prove that certain microlocal conjugations of the associated Szeg\H{o} kernel admits 
an asymptotic expansion with respect to high powers of $L$. 
As an application, we give a Szeg\H{o} kernel proof of the Kodaira type embedding 
theorem on Levi-flat CR manifolds due to Ohsawa and Sibony.
\end{abstract}

\maketitle \tableofcontents

\section{Introduction and statement of the main results} \label{s-intro}

The problem of global embedding CR manifolds is prominent in areas such as complex analysis, 
partial differential equations and differential geometry. 
A general result is the CR embedding of strictly pseudoconvex compact CR manifolds of dimension
greater than five,
due to Boutet de Monvel \cite{BdM1:74b}.

For CR manifolds which are not strictly pseudoconvex, the idea of embedding CR manifolds by means of CR sections of 
tensor powers $L^k$ of a positive CR line bundle $L\To X$ was considered in 
\cite{Hs15,Hs14,HsM:12,Mar96,OS00}. This was of course inspired by Kodaira's embedding theorem.

One way to attack this problem is to produce CR sections by projecting appropriate
smooth sections to the space of CR sections. So it is crucial to understand the large $k$ behaviour 
of the Szeg\H{o} projection $\Pi_k$, i.\,e.\ the orthogonal projection on space $H^0_b(X,L^k)$ of CR sections,
and of its distributional kernel, the Szeg\H{o} kernel. 
To study the Szeg\H{o} projection it is convenient to link it to a parametrix of the $\ol\partial_b$-Laplacian
on $(0,1)$-forms (called Kohn Laplacian). This is also the method used in \cite{BdM1:74b}, where the
parametrix turns out to be a pseudodifferential operator of order $1/2$. 

In~\cite{HsM:12}, we established analogues of the holomorphic Morse inequalities of Demailly \cite{De:85,MM07}
for CR manifolds and we deduced that the space $H^0_b(X,L^k)$ is large under 
the assumption that the curvature of the line bundle is adapted to the Levi form. 
In~\cite{Hs15}, the first author introduced 
a microlocal cut-off function technique and could remove 
the assumptions linking the curvatures of the line bundle and the Levi form 
under rigidity conditions on $X$ and the line bundle. 
Moreover, in~\cite{Hs14}, the first author established partial Szeg\H{o} 
kernel asymptotic expansions and Kodaira embedding theorems on CR manifolds 
with transversal CR $S^1$-action, see also \cite{HLM16}. 

All these developments need the assumptions that either the curvature of the line bundle is 
adapted to the Levi form or rigidity conditions on $X$ and the line bundle. 
The difficulty of this kind of problem comes from the presence of positive 
eigenvalues of the curvature of the line bundle and negative eigenvalues 
of the Levi form of $X$. Thus, it is very interesting to consider 
Levi-flat CR manifolds. In this case, the eigenvalues of the Levi form are zero and we will show that
it is possible to remove the assumptions linking the curvatures of the line bundle and the Levi form
or the rigidity conditions on $X$ and the line bundle. 

Levi-flat CR manifolds are foliated
by complex manifolds and there is a subtle interplay between the
function theory on the leaves and the dynamics of the foliation.
Levi-flat CR submanifolds in projective manifolds play an important role
in classical complex analysis \cite{Gr:58,Gra:63,Na:63,Nm:99} linked to the Levi problem,
foliations and dynamical systems
\cite{Ad:14,Barret:90,Bru08,Cal:07,CLNS:88,CanGon:15,
DeDu:16,In92,LN99,Oh:06a,Oh:07,Oh12,OS00,Si00}.
They admit Lefschetz pencil structures of degree $k$, for any $k$ 
large enough, cf.\ \cite{MarTor:11}.
The topology and  dynamics of Levi-flat hypersurfaces in complex surfaces of general 
type was thoroughly explored in \cite{DeDu:16}, 
where it is shown that
all possible Thurston geometries except the spherical one can occur.
In this context it is important to have a general criterion for the projective
Levi-flat manifolds, analogue to the Kodaira embedding theorem for K\"ahler manifolds.
This is provided by Ohsawa-Sibony theorem \cite{OS00}, see Theorem \ref{t-embleviflat}.
A related result is the projective embedding of compact laminations 
\cite{Der08}, \cite[p.\,401--402]{Gro:99}.
In the program of classifying Levi-flat CR manifolds one is sometimes
led to non-existence results.
There are no compact Levi-flat real hypersurfaces
in a Stein manifold, due to the maximum principle. 
On the other hand, the non-existence of smooth Levi-flat hypersurfaces in 
complex projective spaces $\CP^n$ attracted a lot of attention, 
cf.\ \cite{LN99,Si00}.
The non-existence has been settled for $n\geq3$ but a famous still open conjecture 
is whether this is true for $n=2$.

Viewing Levi-flat CR manifolds as families of complex manifolds,
we can expect analogy with classical results from complex geometry
such as Kodaira embedding theorem.
The natural function theoretical objects on a CR manifold
are CR functions or CR sections of a bundle. 
Actually, Ohsawa and Sibony~\cite{OS00}, cf.\ also \cite{Oh12}, constructed a CR 
projective embedding of class $\cC^\kappa$ 
for any  $\kappa\in\mathbb N$
of a Levi-flat CR manifold by using $\ol\partial$-estimates. 
A natural question is whether we can improve the regularity to $\kappa=\infty$. 
Adachi~\cite{Ad:14} showed that the answer is no, in general. 
The analytic difficulty of this problem comes from the fact that 
the Kohn Laplacian is not hypoelliptic on Levi flat manifolds.
Hypoellipticity and subelliptic estimates are used on CR manifolds with non-degenerate 
Levi form in order to find parametrices of the Kohn Laplacian and establish 
the Hodge decomposition, e.\,g.\ \cite{BdM1:74b,CS:01,HsM:14a,Koh64}.

In this paper, we establish a semiclassical Hodge decomposition for
the the Kohn Laplacian acting on powers $L^k$ as $k\to\infty$ and 
we show that the composition $\Pi_k\mathcal{A}_k$ of $\Pi_k$
with an appropriate pseudodifferential operator $\mathcal{A}_k$
is a semiclassical Fourier integral operator, admitting an 
asymptotic expansion in $k$ (see Theorem~\ref{t-main}).
From this result, we can understand the large 
$k$ behaviour of the Szeg\H{o} projection and produce many global CR functions. 
As an application, we give a Szeg\H{o} kernel proof of Ohsawa and Sibony's 
Kodaira type embedding theorem on Levi-flat CR manifolds.

We now formulate the main results. 
Let $(X,T^{1,0}X)$ be an orientable compact Levi-flat CR manifold of dimension 
$2n-1$, $n\geqslant2$.
We fix a Hermitian metric $\langle\,\cdot\,|\,\cdot\,\rangle$ on 
$TX\otimes_\Real\Complex=:\Complex TX$ such that 
$T^{1,0}X$ is orthogonal to $T^{0,1}X$. The Hermitian metric 
$\langle\,\cdot\,|\,\cdot\,\rangle$ on $TX\otimes_\Real\Complex$ induces a Hermitian metric 
$\langle\,\cdot\,|\,\cdot\,\rangle$ on the bundle $\Lambda^{0,q}(T^*X)$ of $(0,q)$ forms of $X$. 
We denote by $dv_X$ the volume form on $X$ induced by $\langle\,\cdot\,|\,\cdot\,\rangle$. 
Let $(L,h)$ be a CR complex line bundle over $X$, where
the Hermitian fiber metric on $L$ is denoted by $h$. We will denote by
$R^L$ the curvature of $L$ (see Definition~\ref{d-suIII-I}). 
We say that $L$ is positive if $R^L_x$ is positive definite at every $x\in X$. 
Let 
\begin{equation}\label{eigrl}
\lambda_1(x)\leq\ldots\leq\lambda_{n-1}(x),
\end{equation}
be the eigenvalues of $R^L_x$ with respect to 
$\langle\,\cdot\,|\,\cdot\,\rangle$
and set 
\begin{equation}\label{detrl}
\det R^L_x:=\lambda_1(x)\ldots\lambda_{n-1}(x).
\end{equation}
For $k>0$, let $(L^k,h^{k})$ be the $k$-th tensor power of the line bundle $(L,h)$. In this paper, we assume that $k\gg1$. 
For $u,v\in\Lambda^{0,q}_x(T^*X)\otimes L^k_x$ we denote by $\langle\,u\,|\,v\,\rangle_{h^k}$ the induced
pointwise scalar product induced by  $\langle\,\cdot\,|\,\cdot\,\rangle$ and $h^k$.
We then get natural a global $L^2$ inner product $(\,\cdot\,|\,\cdot\,)_{k}$
on $\Omega^{0,q}(X, L^k)$, $(\,\alpha\,|\,\beta\,)_{k}:=\int_X \big\langle\,\alpha\,|\,\beta\,\big\rangle_{h^k}\, dv_X$.
Similarly, we have an $L^2$ inner product $(\,\cdot\,|\,\cdot\,)$ on $\Omega^{0,q}(X)$.
 We denote by $L^2_{(0,q)}(X,L^k)$ and $L^2_{(0,q)}(X)$ the completions of 
 $\Omega^{0,q}(X,L^k)$ and $\Omega^{0,q}(X)$ with respect to 
 $(\,\cdot\,|\,\cdot\,)_{k}$ and $(\,\cdot\,|\,\cdot\,)$, respectively. 
 For $q=0$, we write $L^2(X):=L^2_{(0,0)}(X)$, $L^2(X,L^k):=L^2_{(0,0)}(X,L^k)$. 

Let $\ddbar_{b,k}:\cC^\infty(X,L^k)\To\Omega^{0,1}(X,L^k)$ be the tangential Cauchy-Riemann 
operator cf.\ \eqref{e-suVI}. We extend 
$\ddbar_{b,k}$ to $L^2(X,L^k)$ by $\ddbar_{b,k}:{\rm Dom\,}\ddbar_{b,k}\subset L^2(X,L^k)\To L^2_{(0,1)}(X,L^k),\:\: 
u\longmapsto \ddbar_{b,k}u$, 
with ${\rm Dom\,}\ddbar_{b,k}:=\{u\in L^2(X, L^k);\, \ddbar_{b,k}u\in L^2_{(0,1)}(X, L^k)\}$, 
where $\ddbar_{b,k} u$ is defined in the sense of distributions. The \emph{Szeg\H{o} projection} 
\begin{equation}\label{sz-def}
\Pi_k:L^2(X,L^k)\To{\rm Ker\,}\ddbar_{b,k}
\end{equation}
is the orthogonal projection with respect to $(\,\cdot\,|\,\cdot\,)_{k}$\,. 

The Szeg\H{o} projection $\Pi_k$ is not a smoothing operator. 
Nevertheless, our first result shows that it 
enjoys the following regularity property.
\begin{thm}\label{t-mainb}
Let $X$ be an orientable compact Levi-flat CR manifold and let $(L,h)$ be a positive CR line bundle
on $X$. Then for every $\ell\in\mathbb N_0$ there exists $N_\ell>0$ 
such that  for every $k\geq N_\ell$\,, 
$\Pi_k(\cC^\infty(X,L^k))$ is an infinite dimensional subspace of $\cC^\ell(X,L^k)$
and the induced projection
\(\Pi_k: \cC^\infty(X,L^k)\To \cC^\ell(X,L^k)\)
is continuous.  
\end{thm}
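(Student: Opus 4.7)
The plan is to combine Theorem~\ref{t-main} with the semiclassical Hodge decomposition for the Kohn Laplacian $\Box_{b,k}$ established earlier in the paper: positivity of $R^L$ produces a spectral gap at zero for $\Box_{b,k}$ once $k$ is large, and hence a bounded Green-type operator $N_k$ satisfying $I=\Pi_k+N_k\Box_{b,k}$ on smooth sections. Using the pseudodifferential cut-off $\mathcal{A}_k$ from Theorem~\ref{t-main}, for $u\in\cC^\infty(X,L^k)$ I would decompose
\[
\Pi_k u=\Pi_k\mathcal{A}_k u+\Pi_k(I-\mathcal{A}_k)u
\]
and bound each summand separately in $\cC^\ell$.

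For the first summand, Theorem~\ref{t-main} represents $\Pi_k\circ\mathcal{A}_k$ as a semiclassical Fourier integral operator with real phase and amplitude admitting a full asymptotic expansion in $k^{-1}$. Differentiating the associated oscillatory integral up to $\ell$ times produces factors of size $k^{\ell}$ multiplying uniformly integrable amplitudes, giving $\Pi_k\mathcal{A}_k u\in\cC^\ell(X,L^k)$ with norm bounded by a fixed-order Sobolev norm of $u$ (with constant depending on $k$). This is essentially bookkeeping on the FIO representation. For the second summand, $\mathcal{A}_k$ is a microlocal cut-off concentrating on a conic neighbourhood of the characteristic variety
\[
\Sigma=\{(x,\xi)\in T^*X\setminus 0:\xi\perp T^{1,0}_xX\oplus T^{0,1}_xX\}
\]
of $\ddbar_b$, so $I-\mathcal{A}_k$ has microsupport in the region where $\Box_{b,k}$ is elliptic (the bundle twist by $L^k$ does not affect the principal symbol). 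A standard pseudodifferential parametrix exists there; combined with the Hodge identity $I=\Pi_k+N_k\Box_{b,k}$ this forces $\Pi_k\circ(I-\mathcal{A}_k)$ to be smoothing modulo $k^{-\infty}$ errors, hence in particular $\cC^\infty$-continuous.

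The principal obstacle is the global handling of $N_k$: on a Levi-flat manifold the Kohn Laplacian in degree one is not subelliptic, so the partial inverse does not gain regularity on its own. The compensating ingredient is that its input in the decomposition above is $\ddbar_{b,k}v$ with $v=(I-\mathcal{A}_k)u$ already microlocally smooth precisely where ellipticity fails, and that the uniform spectral-gap bound on $\|N_k\|_{L^2\to L^2}$ coming from positivity of $(L,h)$ allows the elliptic parametrix off $\Sigma$ to be composed with $N_k$ without losing control of the $k$-asymptotics, so that the inevitable gluing errors can be absorbed into the $k^{-\infty}$ remainder of the expansion provided by Theorem~\ref{t-main}.
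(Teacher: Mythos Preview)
Your proposal is circular. In the paper's logical architecture, Theorem~\ref{t-main} is proved in Section~\ref{s-gue140912} \emph{after} and \emph{using} Theorem~\ref{t-mainb}: the key step (Theorem~\ref{t-gue140912}) that identifies $\Pi_{k,s}$ with the local parametrix $\mathcal{S}_k$ modulo $O(k^{-\infty})$ in $\cC^\ell$ explicitly invokes the Sobolev mapping property $\Pi_k:\cC^\infty(X,L^k)\to H^{\ell+n}(X,L^k)$ and the estimate \eqref{e-gue141029VI}, both of which are the content of Theorem~\ref{t-gue141029I} and hence of Theorem~\ref{t-mainb}. So you cannot invoke the FIO description of $\Pi_k\mathcal{A}_k$ without already knowing the regularity you are trying to prove. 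What is available independently of Theorem~\ref{t-mainb} is only the local approximate projector $\mathcal{S}_k$ of Theorem~\ref{t-gue140826a}; passing from $\mathcal{S}_k$ to $\Pi_{k,s}$ is exactly where the regularity of $\Pi_k$ enters.

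Even setting circularity aside, your treatment of the second summand $\Pi_k(I-\mathcal{A}_k)u$ has a gap. Writing $\Pi_k=I-\ol\pr^{*}_{b,k}N^{(1)}_k\ddbar_{b,k}$, you need $\ol\pr^{*}_{b,k}N^{(1)}_k$ to send smooth $(0,1)$-forms into $H^m$; the $L^2$ spectral gap alone gives only $L^2\to L^2$ boundedness of $N^{(1)}_k$, and on a Levi-flat manifold there is no subellipticity to upgrade this. Your appeal to an ``elliptic parametrix off $\Sigma$'' does not close this, because composing an order $-2$ parametrix with an operator that is merely $L^2$-bounded does not produce a gain of derivatives. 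The paper fills this gap by importing the Ohsawa--Sibony Sobolev estimate (Proposition~\ref{p-gue141029}) and running the Kohn--Nirenberg elliptic regularization $\Box^{(1)}_{b,k}+\varepsilon T^*T$ to obtain $\|\ol\pr^{*}_{b,k}N^{(1)}_k u\|_{m,k}\le k^{M(m)}\|u\|_{m,k}$ (Theorem~\ref{t-gue141029}); Theorem~\ref{t-mainb} then follows from $\Pi_k=I-\ol\pr^{*}_{b,k}N^{(1)}_k\ddbar_{b,k}$ and Sobolev embedding, with no FIO machinery and no microlocal cut-off $\mathcal{A}_k$ involved at all.
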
 
The regularity statement of Theorem \ref{t-mainb}
is related to the regularity of the $\ddbar$-Neumann problem
on weakly pseudoconvex domains endowed with a positive line bundle \cite{Koh73,Ta:83}. In that case one has to take high enough powers to achieve
$\cC^\ell$-regularity, too.

Let us recall now that the Szeg\H{o} kernel $\Pi(x,y)$ of the boundary $X$ of a 
relatively compact strictly pseudoconvex domain $G$ 
is a Fourier integral operator with complex phase,
by a result of Boutet de Monvel-Sj\"ostrand~\cite{BS:76} (here we consider the
projection on the space of CR functions or CR sections of a fixed CR line bundle). 
In particular, $\Pi(x,y)$ is smooth outside the diagonal $x=y$ of $X\times X$
and there is a precise description of the singularity on the diagonal $x=y$,
where $\Pi(x,y)$ has a certain asymptotic expansion.
More precisely, let $G=\{\rho<0\}\Subset G'$ be a strictly pseudoconvex domain in
a $(n+1)$-dimensional complex manifold $G'$,
where $\rho\in\cC^\infty(G')$ is a defining function of $G$.
Then by taking
an almost-analytic extension 
$\varphi=\varphi(x,y):G'\times G'\to\mathbb{C}$ of $\rho$ with certain properties
\cite[(1.1)-(1.3)]{BS:76} we have
\begin{equation}\label{e:szego_spsc}
\Pi(x,y)=\int^{\infty}_{0}\!\! e^{i\varphi(x, y)t}s(x, y, t)dt+R(x,y),
\end{equation}
where $s(x, y, t)\in S^{n}(X\times X\times\mathbb{R}_+)$ and $R(x,y)$
is a smooth function.

For a Levi-flat CR manifold we do not have such a neat characterization of the singularities
of the Szeg\H{o} kernel $\Pi_k(x,y)$ for fixed $k$.
The smoothing properties of $\Pi_k$ are linked to the singularities of its kernel
$\Pi_k(x,y)$ and to its large $k$ behaviour. Although it is quite difficult to describe them
directly, we will show that $\Pi_k$ still admits an asymptotic expansion in weak sense
(that is, in Sobolev spaces, see Theorem \ref{t-gue150129} and Section 
\ref{SS:exSz} for an explicit example). 

Let $s$ be a local trivializing section of $L$ on an open set $D\subset X$.
We define the weight of the metric with respect to $s$ to be the function
$\phi\in \cC^\infty(D)$ satisfying $\abs{s}^2_{h}=e^{-2\phi}$. 
We have an isometry 
\begin{equation}\label{isom}
U_{k,s}:L^2(D)\to L^2(D,L^k),\:\: u\longmapsto ue^{k\phi}s^k,
\end{equation}
 with
inverse $U_{k,s}^{-1}:L^2(D,L^k)\to L^2(D)$, $\alpha\mapsto e^{-k\phi}s^{-k}\alpha$. 
The localization of $\Pi_k$ with respect to the trivializing section $s$ is given by 
\begin{equation}\label{e-gue141001}
\Pi_{k,s}:L^2_{\mathrm{comp}}(D)\To L^2(D),\:\:
\Pi_{k,s}= U_{k,s}^{-1}\Pi_kU_{k,s},
\end{equation}
where $L^2_{\mathrm{comp}}(D)$ is the subspace of elements of $L^2(D)$ 
with compact support in $D$. 
The second main result of this work shows that for $k\to\infty$, 
$\Pi_k$ is rapidly decreasing outside the diagonal,
and describes the singularities of  $\Pi_k$ semi-clasically
in terms of an oscillatory integral.

\begin{thm}\label{t-gue150129}
Let $X$ be an orientable compact Levi-flat CR manifold of dimension $2n-1$, $n\geq2$. 
Assume that there is a positive CR line bundle $L$ over $X$. 
Then for every $\ell\in\mathbb N_0$, 
there is $N_\ell>0$ such that for every $k\geq N_\ell$ we have:
\\[2pt]
(i)\:\:$\Td\chi\,\Pi_{k}\chi=O(k^{-\infty}):\cC^\infty(X,L^k)\To\cC^\ell(X,L^k)$, 
for all $\chi, \Td\chi\in\cC^\infty(X)$ with 
$\supp\chi\cap\supp\Td\chi=\emptyset$;
\\[2pt]
(ii)\:\:$\Pi_{k,s}-\mathcal{S}_k=O(k^{-\infty}):\cC^\infty_0(D)\To\cC^\ell(D)$, where  
$\mathcal{S}_k:\cC^\infty_0(D)\To \cC^\infty(D)$ is a continuous operator whose kernel satisfies
\begin{equation} \label{e-gue150129}
\mathcal{S}_k(x,y)-\int_{\mathbb{R}} e^{ik\psi(x,y,u)}s(x,y,u,k)du=
O(k^{-\infty}):H^s_{{\rm comp\,}}(D)\To H^s_{{\rm loc\,}}(D),\ \ \forall s\in\mathbb Z,
\end{equation}
where 
\begin{equation}  \label{e-gue150129I}
\begin{split}
&s(x,y,u,k)\sim\sum^\infty_{j=0}s_j(x,y,u)k^{n-j}\text{ in }S^{n}_{{\rm loc\,}}
(1;D\times D\times\Real), \\
&s_0(x,x,u)=\frac{1}{2}\pi^{-n}\abs{\det R^L_x},\ \ \ \forall x\in D,\ \ \forall u\in\Real,
\end{split}
\end{equation}
and the phase function 
$\psi\in \cC^\infty(D\times D\times\Real)$ satisfies ${\rm Im\,}\psi(x,y,u)\geq0$ and 
\begin{equation}\label{e-gue140915}
\begin{split}
&d_x\psi|_{(x,x,u)}=-2{\rm Im\,}\ddbar_b\phi(x)+u\omega_0(x),\ \ 
x\in D,\:u\in\Real,\\
&d_y\psi|_{(x,x,u)}=2{\rm Im\,}\ddbar_b\phi(x)-u\omega_0(x),\ \ x\in D,\:u\in\Real,\\
&\frac{\pr\psi}{\pr u}(x,x,u)=0\:\text{and $\psi(x,x,u)=0$},\\
&\mbox{if $x\neq y$ then $\frac{\pr\psi}{\pr u}(x,y,u)\neq0$ or $\psi(x,y,u)\neq0$},
\end{split}
\end{equation} 
and there exists $c>0$ such that
\begin{equation}\label{e-gue150129II}
\abs{d_y\psi(x,y,u)}\geq c\abs{u}, \ \ \forall u\in\Real,\ \ \forall (x,y)\in D\times D.
\end{equation}
Here 
$\omega_0\in \cC^\infty(X,T^*X)$ is 
the positive $1$-form of unit length orthogonal to 
$\Lambda^{1,0}(T^*X)$ and $\Lambda^{0,1}(T^*X)$, see Definition \ref{puf}.
\end{thm}
Theorem \ref{t-gue150129} shows that the (localized) Szeg\H{o} projector
is close in the semiclassical limit to an approximate Szeg\H{o} projector 
$\mathcal{S}_k$, which has an asymptotic expansion in Sobolev spaces,
given by the operator $S_k:\cC^\infty_0(D)\To\cC^\infty(D)$
with kernel 
\begin{equation}\label{e-gue160814}
S_k(x,y)=\int_{\mathbb{R}} e^{ik\psi(x,y,u)}s(x,y,u,k)du.
\end{equation}
Note that integrating by parts with respect to $y$ several times in \eqref{e-gue160814} and using 
\eqref{e-gue150129II}, we conclude  that $S_k$ is well-defined as a continuous operator 
$S_k:\cC^\infty_0(D)\To\cC^\infty(D)$. 

For fixed $u\in\Real$, the integrand in the formula \eqref{e-gue160814} of $S_k$
(hence also for $\mathcal{S}_k$ or $\Pi_k$) bears a resemblance to
the Bergman kernel $B_k$  of the $k$-th power of a positive line bundle $L$ on 
a complex manifold (cf.\ \cite{HsM:14,SZ02,Zelditch98}, see \eqref{e-gue160619}). 
Note that $B=\sum_{k\geq0}B_k$
is basically the Szeg\H{o} kernel
of the strictly pseudoconvex CR manifold given by the boundary 
of the unit disc bundle of $L^*$. The kernel of $B$ has the form \eqref{e:szego_spsc}
involving an integral $\int_0^\infty dt$ and the the $B_k$ are its Fourier
coefficients (see \cite{Zelditch98}). 
In our CR Levi-flat at setting, the
$\Pi_k$ most resemble $B_k$ in being semi-classical kernels (with a $k$ in the phase)
but also formally resemble $B$ in being integrals over an additional parameter
$u$.
But the integrals over the additional parameters in \eqref{e:szego_spsc}
and \eqref{e-gue160814} have completely
different origins.  
The integral $\int_{\Real} du$ in \eqref{e-gue160814}
arises due to the transversal direction to the leaves of the Levi foliation. 
This is a different kind of integral than that for $B$, which
arrises through summation over $k\geq0$.

For fixed $k$, $S_k$ is not a FIO since the phase function $\psi(x,y,u)$ is
not homogeneous of degree one with respect to $u$. To obtain a homogeneous FIO, 
we should have to sum $S_k$ in $k$. 
Moreover, the domain of integration in \eqref{e-gue160814}
is $\mathbb{R}$, unlike \eqref{e:szego_spsc}, where it is $\mathbb{R}_+$.
In Section \ref{SS:exSz} we show that the Szeg\H{o} projector $\Pi_k$ itself
is not a FIO, in contrast to the result of Boutet de Monvel-Sj\"ostrand \cite{BS:76}
for strictly pseudoconvex domains.
The proof of Theorem \ref{t-gue150129} is also different from 
\cite{BS:76} and is based on the heat
equation method of Menikoff-Sj\"ostrand~\cite{MS78}.
For the precise form of $\psi(x,y,u)$ see \eqref{e-gue140826Im} 
and \eqref{e-gue160629zII}. This can be compared
to the form \cite[Theorems 3.2, 3.4]{HsM:14a} of the phase function for the Szeg\H{o}
kernel on a non-degenerate CR manifold.

If $M$ is compact complex manifold of dimension $n$ endowed with a positive line bundle
$L\to M$ then the localization of the Bergman kernel $B_k$ corresponding to
$L^k$ has the form $B_{k,s}(z,w)=e^{ik\varphi(z,w)}b(z,w,k)$, where
$b(z,w,k)\sim\sum^\infty_{j=0}k^{n-1-j}b_j(z,w)$ in 
$S^{n-1}_{{\rm loc\,}}(1;D\times D)$, 
by the works of Zelditch~\cite{Zelditch98} 
and Shiffman-Zelditch~\cite{SZ02}, see also \cite{HsM:14}
(cf.\ Section \ref{SS:exSz}). We see thus that $S_k(x,y)$ is
an integrated version of the Bergman kernel on a complex manifold.  
This corresponds to the fact that the Levi-flat CR manifold is foliated by complex manifolds
and we have a transversal direction (where there are no elliptic estimates) 
in which we integrate.
Note that in the case of a strictly pseudoconvex CR manifold we always have
a `bad' direction for ellipticity. In our case of a Levi-flat manifold
endowed with a positive line bundle we have elliptic estimates in the
directions of the Levi-foliation and the `bad' direction is the transversal one. 
As a consequence, as shown by \eqref{e-gue150129}, 
$\mathcal{S}_k(x,y)$ and hence $\Pi_{k,s}(x,y)$, admits an asymptotic expansion 
$S_k(x,y)+O(k^{-\infty})$ only in Sobolev spaces 
(see also Theorem~\ref{t-gue150130} for the details). 
This is an important difference between the Levi-flat and 
the K\"ahler case.
 
The fact that we integrate over $\mathbb{R}$ in \eqref{e-gue150129}
prevents us from obtaining asymptotics in the $\cC^\ell$-topology for
the kernel of $\Pi_{k,s}$. However, by composing
with certain semiclassical pseudo-differential operators $\mathcal{A}_k$
we obtain asymptotics in the $\cC^\ell$-topology for
the kernels of $(\Pi_{k,s}-\mathcal{S}_k)\mathcal{A}_k$ and eventually
$\Pi_{k,s}\mathcal{A}_k$. The symbol of $\mathcal{A}_k$ is
supported in a large interval $(-M/2,M/2)$ in the fiber direction
and by taking $M$ large enough we recover increasingly more features of $\Pi_k$.
The freedom to choose these operators and the constant $M$ will be crucial for proving 
the embedding Theorem \ref{t-embleviflat} (e.\,g.\ in \eqref{e-gue140916}).

Let $\mathcal{A}_k$ be 
a properly supported semi-classical pseudodifferential operator on $D$ of order $0$ and 
classical symbol
(see Definition \ref{d-gue13628I})
\begin{equation}  \label{ak_s}
\begin{split}
&\alpha(x,\eta,k)\sim\sum^\infty_{j=0}k^{-j}\alpha_j(x,\eta)\:\:\text{in $S^0_{{\rm loc\,}}(1,T^*D)$},\\
\alpha(x,\eta,k)=0,\:&\alpha_j(x,\eta)=0, \:j=0,1,2,\ldots, \text{for $\abs{\eta}\geq\tfrac12 M$, 
for some $M>0$.}
\end{split}
\end{equation}
Note that $\mathcal{A}_k$ is smoothing for each $k$. 
A semi-classical pseudodifferential operator with these properties will be called \emph{good}.
\begin{thm}\label{t-main}
Let $X$ be an orientable compact Levi-flat CR manifold of dimension $2n-1$, $n\geq2$. 
Assume that there is a positive CR line bundle $L$ over $X$. 
Assume that $\mathcal{A}_k$ is a good semi-classical pseudodifferential operator
on $D$.
Then for every $\ell\in\mathbb N_0$, 
there is $N_\ell>0$ such that for every $k\geq N_\ell$\,, 
$(\Pi_{k,s}\mathcal{A}_k)(\cdot\,,\cdot)\in \cC^\ell(D\times D)$ and
\begin{equation} \label{e-gue13630IVabm}
(\Pi_{k,s}\mathcal{A}_k)(x,y)\equiv\int_{\mathbb{R}} e^{ik\psi(x,y,u)}a(x,y,u,k)du\mod O(k^{-\infty})\:\: 
\text{in $\cC^\ell(D\times D)$},
\end{equation}
where 
\begin{equation}  \label{e-gue13630Vabm}
\begin{split}
&a(x,y,u,k)\sim\sum^\infty_{j=0}a_j(x,y,u)k^{n-j}\text{ in }S^{n}_{{\rm loc\,}}
(1;D\times D\times(-M,M)), \\
&a(x,y,u,k), a_j(x,y,u)\in \cC^\infty_0(D\times D\times(-M,M)),\ \ j=0,1,2,\ldots,\\
&a_0(x,x,u)=\frac{1}{2}\pi^{-n}\abs{\det R^L_x}\alpha_0\big(x,u\omega_0(x)-2{\rm Im\,}\ddbar_b\phi(x)\big),
\ \ \ x\in D,\:|u|<M,
\end{split}
\end{equation}
and $\psi\in \cC^\infty(D\times D\times\Real)$ is as in Theorem~\ref{t-gue150129}.
\end{thm}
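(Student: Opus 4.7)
The plan is to reduce Theorem~\ref{t-main} to the composition of the oscillatory integral provided by Theorem~\ref{t-gue150129} with the semi-classical pseudodifferential operator $\mathcal{A}_k$, and then to apply complex stationary phase. By Theorem~\ref{t-gue150129}(ii), $\Pi_{k,s}\equiv\mathcal{S}_k\mod O(k^{-\infty})$ as operators $\cC^\infty_0(D)\to\cC^\ell(D)$, and since $\mathcal{A}_k$ is properly supported and continuous, we obtain
\[
\Pi_{k,s}\mathcal{A}_k \equiv \mathcal{S}_k\mathcal{A}_k \mod O(k^{-\infty})
\]
in $\cC^\ell(D\times D)$, so it suffices to study the right-hand side.

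Using a semi-classical representation of $\mathcal{A}_k$ whose symbol is compactly supported in $\eta$ by \eqref{ak_s}, together with \eqref{e-gue150129}, the kernel of $\mathcal{S}_k\mathcal{A}_k$ can be written as an iterated oscillatory integral
\[
(\mathcal{S}_k\mathcal{A}_k)(x,y)\equiv C\,k^{2n-1}\iiint e^{ik\Phi(x,y,z,u,\eta)} s(x,z,u,k)\alpha(z,\eta,k)\,dz\,du\,d\eta \mod O(k^{-\infty}),
\]
where $\Phi(x,y,z,u,\eta)=\psi(x,z,u)+\langle z-y,\eta\rangle$ and $C$ is a normalization constant. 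The compact support of $\alpha$ in $\eta$ combined with the lower bound \eqref{e-gue150129II} controls the integrand for large $|u|$ via integration by parts in $z$, so that $u$ is effectively localized to a bounded interval contained in $(-M,M)$.

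Next, I would apply the Melin--Sj\"ostrand complex stationary phase method in the variables $(z,\eta)$ with large parameter $k$. For fixed $(x,y,u)$ the critical equations $d_\eta\Phi=z-y=0$ and $d_z\Phi=d_z\psi(x,z,u)+\eta=0$ give the unique critical point $z_*=y$, $\eta_*=-d_z\psi(x,y,u)$, and the Hessian has block form $\begin{pmatrix}\psi''_{zz}&I\\ I&0\end{pmatrix}$, whose determinant is $\pm 1$, hence is nondegenerate. The value of the phase at the critical point is $\psi(x,y,u)$, so stationary phase yields
\[
(\mathcal{S}_k\mathcal{A}_k)(x,y)\equiv\int e^{ik\psi(x,y,u)} a(x,y,u,k)\,du \mod O(k^{-\infty})
\]
in $\cC^\ell(D\times D)$, with $a(x,y,u,k)\sim\sum_{j\geq0}a_j(x,y,u)k^{n-j}$ in $S^n_{{\rm loc}}(1;D\times D\times(-M,M))$, each $a_j$ expressible through the standard universal polynomials in the derivatives of $s$, $\alpha$ and $\psi$ evaluated at $(z_*,\eta_*)$. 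In particular the leading symbol is $a_0(x,y,u)=s_0(x,y,u)\,\alpha_0(y,-d_z\psi(x,y,u))$, since the Hessian contributes the factor $1$.

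Finally, to identify $a_0$ on the diagonal I restrict to $x=y$. By \eqref{e-gue140915}, $\eta_*=-d_z\psi|_{(x,x,u)}=u\omega_0(x)-2{\rm Im\,}\ddbar_b\phi(x)$, and combining this with $s_0(x,x,u)=\tfrac{1}{2}\pi^{-n}|\det R^L_x|$ from \eqref{e-gue150129I} recovers the formula for $a_0(x,x,u)$ in \eqref{e-gue13630Vabm}. The main technical obstacle is carrying out the complex stationary phase with the non-standard phase $\psi$ that depends on the extra parameter $u$, uniformly in $(x,y)$ and in the $\cC^\ell$ topology; this is where the precise geometric properties \eqref{e-gue140915}--\eqref{e-gue150129II} of $\psi$, together with the compact support condition on $\alpha$ in \eqref{ak_s}, are essential.
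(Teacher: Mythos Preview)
Your proposal is correct and follows essentially the same route as the paper: reduce $\Pi_{k,s}\mathcal{A}_k$ to $\mathcal{S}_k\mathcal{A}_k$ via Theorem~\ref{t-gue150129}(ii), then evaluate the composition by Melin--Sj\"ostrand complex stationary phase in $(z,\eta)$. One small point to tighten: the passage from the operator estimate $\Pi_{k,s}-\mathcal{S}_k=O(k^{-\infty}):\cC^\infty_0(D)\to\cC^\ell(D)$ to a $\cC^\ell(D\times D)$ kernel estimate for $(\Pi_{k,s}-\mathcal{S}_k)\mathcal{A}_k$ needs more than ``properly supported and continuous''; it uses that the compact $\eta$-support in \eqref{ak_s} makes $\mathcal{A}_k$ smoothing with $\mathcal{A}_k=O(k^{n(N,s)}):H^s_{\rm comp}(D)\to\cC^N_0(D)$ for suitable exponents, which the paper records explicitly before composing.

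There is one genuine methodological difference worth flagging. You obtain the leading coefficient $a_0(x,x,u)$ by reading it off directly from stationary phase as $s_0(x,x,u)\,\alpha_0(x,u\omega_0(x)-2{\rm Im}\,\ddbar_b\phi(x))$ and then inserting the value of $s_0(x,x,u)$ from \eqref{e-gue150129I}. In the paper's internal development this value of $s_0$ is not available a priori: the explicit constant $\tfrac{1}{2}\pi^{-n}|\det R^L_x|$ is obtained only through Theorem~\ref{t-gue140909}, via a self-adjointness argument comparing $(\widehat\chi\mathcal{A}^*_k\mathcal{S}^*_k\chi)(\chi\mathcal{S}_k\mathcal{A}_k\widehat\chi)$ with $\widehat\chi\mathcal{A}^*_k\mathcal{S}_k\mathcal{A}_k\widehat\chi$ and solving the resulting quadratic relation \eqref{e-gue140909VIII}. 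Since you are allowed to take Theorem~\ref{t-gue150129} as given, your shortcut is legitimate here and indeed more direct; just be aware that in the paper's logical order the leading coefficient in Theorem~\ref{t-gue150129} is itself a consequence of the computation you are bypassing.
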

For more results and references about the singularities of the Szeg\H{o} kernel and embedding
of CR manifolds we refer to \cite{HsM:14a}.

As an application of Theorem~\ref{t-mainb} and Theorem~\ref{t-main}, we 
show that by projecting appropriate sections through $\Pi_k$
we obtain CR sections which separate points and tangent vectors. 
Hence we give a Szeg\H{o} kernel proof 
of the following result due to Ohsawa and Sibony \cite{Oh12,OS00}. 
\begin{thm}\label{t-embleviflat}
Let $X$ be an orientable compact Levi-flat CR manifold of dimension $2n-1$, $n\geq2$. 
Assume that there is a positive CR line bundle $L$ over $X$. Then, for every $\ell\in\mathbb N$
there is a $M_\ell>0$ such that for every $k\geq M_\ell$\,, we can find $N_k$ CR sections 
$s_0, s_1,\ldots,s_{N_k}\in \cC^\ell(X,L^k)$, such that the map 
$X\ni x\mapsto \left[s_0(x),s_1(x),\ldots,s_{N_k}(x)\right]\in\Complex\mathbb P^{N_k}$ is an embedding. 
\end{thm}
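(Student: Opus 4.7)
The plan is to verify the three standard embedding conditions for the CR map $\Phi_k=[s_0:\cdots:s_{N_k}]\colon X\to\Complex\mathbb P^{N_k}$: that the sections have no common zero, that the map is injective, and that it is an immersion. Every candidate section will be constructed by applying $\Pi_k$ to a smooth compactly supported section of the form $g_k:=U_{k,s}(\mathcal{A}_kf)$ with $f\in\cC^\infty_c(D)$ and $\mathcal{A}_k$ as in \eqref{ak_s}; Theorem~\ref{t-mainb} then supplies $\cC^\ell$ regularity for $k\geq N_\ell$, and Theorems~\ref{t-gue150129}--\ref{t-main} supply the quantitative control on the local expressions $(\Pi_{k,s}\mathcal{A}_kf)(x)$. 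By compactness of $X$ it suffices to realize all three conditions locally at each $x_0\in X$ and then extract a finite subcover.

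\emph{Spanning.} Fix $x_0\in D$, let $(s,\phi)$ be a local trivialization, and choose $\mathcal{A}_k$ whose leading symbol $\alpha_0(x,\eta)$ is nonnegative, supported near $x_0$, and strictly positive at $(x_0,u_0\omega_0(x_0)-2{\rm Im\,}\ddbar_b\phi(x_0))$ for some $|u_0|<M$. Take a bump $f\in\cC^\infty_c(D)$ equal to $1$ near $x_0$. The oscillatory integral \eqref{e-gue13630IVabm} combined with the diagonal formula \eqref{e-gue13630Vabm}, the degeneracy $\psi(x,x,u)\equiv0$ from \eqref{e-gue140915}, and the transverse nondegeneracy \eqref{e-gue150129II} yields $(\Pi_{k,s}\mathcal{A}_kf)(x_0)\neq 0$ for $k$ large, with leading coefficient proportional to $\abs{\det R^L_{x_0}}>0$ by positivity of $L$. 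Undoing the trivialization produces a $\cC^\ell$ CR section $\sigma^{0}_{x_0,k}\in\cC^\ell(X,L^k)$ with $\sigma^{0}_{x_0,k}(x_0)\neq 0$.

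\emph{Injectivity and immersion.} For $y_0\neq x_0$, Theorem~\ref{t-gue150129}(i) applied with cutoffs $\Td\chi$ near $y_0$ and $\chi$ near ${\rm Supp\,}\mathcal{A}_kf$ gives $\sigma^{0}_{x_0,k}(y_0)=O(k^{-\infty})$, so $\sigma^{0}_{x_0,k}$ distinguishes $x_0$ from $y_0$ in projective space for $k$ large. To separate tangent vectors at $x_0$, introduce local CR-adapted coordinates $(z_1,\dots,z_{n-1},t)$ on $D$ with $(z_j)$ holomorphic along each Levi leaf and $dt(x_0)$ parallel to $\omega_0(x_0)$, and apply $\Pi_k\circ\mathcal{A}_k$ to inputs $(z_j-z_j(x_0))f$ and $(\bar z_j-\bar z_j(x_0))f$ for $j=1,\dots,n-1$, together with $f$ for a second family of symbols $\widetilde\alpha_0$ whose first $u$-moment $\int u\,\widetilde\alpha_0(x_0,\cdot)\,du$ is nonzero. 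Taylor expanding \eqref{e-gue13630IVabm} one order beyond the diagonal and reading off $d_x\psi|_{(x_0,x_0,u)}=-2{\rm Im\,}\ddbar_b\phi(x_0)+u\omega_0(x_0)$ identifies the $1$-jet at $x_0$ of each resulting CR section as an explicit linear functional of the input data, and by a dimension count a suitable $(2n-1)$-tuple of inputs yields a full-rank jet matrix.

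\emph{Conclusion and main obstacle.} Taking a finite subcover of $X$ and collecting all sections produced at each point yields $s_0,\dots,s_{N_k}\in\cC^\ell(X,L^k)$ satisfying the three embedding conditions, so $\Phi_k$ is an embedding for $k\geq M_\ell$, where $M_\ell$ is taken as the maximum of the finitely many thresholds $N_\ell$ involved. The hard step is the tangent separation: because \eqref{e-gue140915} forces $\psi(x,x,u)\equiv 0$, the phase of \eqref{e-gue150129} is totally degenerate on the diagonal and jet information cannot be obtained by ordinary stationary phase; one has to work with the first-order Taylor expansion of $\psi$ off the diagonal. The CR-direction components of $d_x\psi$ come from $-2{\rm Im\,}\ddbar_b\phi$ and are probed by multiplying the input by coordinate functions, but the Reeb-direction jet lives in the $u\omega_0$ term and is accessible only through the $u$-moments of $\alpha_0$; the freedom to choose the pseudodifferential operator $\mathcal{A}_k$ provided by Theorem~\ref{t-main} is precisely what makes this last degree of freedom available.
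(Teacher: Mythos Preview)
Your overall plan is sound and uses the same ingredients as the paper, but there is a genuine gap in the injectivity step. You propose to separate a pair $x_0\neq y_0$ either by local injectivity coming from the immersion (when $y_0$ is near $x_0$) or by the off-diagonal decay of Theorem~\ref{t-gue150129}(i) (when the supports of $\Td\chi$ and $\chi$ are disjoint). The problem is that these two regimes need not overlap for a single power $k$: the cutoff supports are fixed in advance, so the off-diagonal argument only applies once $d(x_0,y_0)$ exceeds some fixed $\epsilon>0$, whereas the radius on which the immersion is automatically injective is governed by second-order behaviour of the map and shrinks with $k$ (the constructed sections concentrate at scale $k^{-1/2}$ in the CR directions and $k^{-1}$ in the Reeb direction). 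Pairs at intermediate distance are covered by neither mechanism, and your appeal to ``the maximum of finitely many thresholds'' is circular: a cover fine enough to close the gap would itself have to refine as $k\to\infty$, so the number of thresholds is not bounded independently of $k$.

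The paper resolves this by using two different tensor powers. It first fixes $k$ large (and $0<\delta<\delta_0$) so that the map $\Phi_{k,\delta}$ built from the sections $u_{k,\delta,p},u^j_{k,\delta,p}$ of \eqref{e-gue140916IV}, \eqref{e-gue140916a} is an immersion, hence injective on balls of some radius $r_k>0$ depending on this now-fixed $k$ (Theorem~\ref{t-gue140917}). It then chooses a second, much larger power $m$ and constructs peak sections $u_{m,\delta,p}$ concentrated on sets $D_{p,m}$ of diameter $O((m\log m)^{-1})\ll r_k$; Lemma~\ref{l-gue140919} shows these separate any two points at distance $\geq r_k/4$. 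The product map $(\Phi_{k,\delta},\Psi_{m,\delta})$ into $\Complex\mathbb P^{(n+1)d_k-1}\times\Complex\mathbb P^{d_m-1}$ is then a $\cC^\ell$ CR embedding (Theorem~\ref{t-gue140919}), and composing with the Segre embedding produces an embedding by sections of $L^{k+m}$. A minor remark: in your immersion step the inputs $(\bar z_j-\bar z_j(x_0))f$ are superfluous, since the projected sections are CR and hence have vanishing $\pr_{\bar z_j}$-derivatives; only the holomorphic inputs and the $u$-moment trick for the Reeb direction are needed, exactly as in Lemmas~\ref{l-gue140916} and~\ref{l-gue140916I}.
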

Analytic proofs of the Kodaira embedding theorem for K\"ahler and symplectic manifolds,
based on the Bergman/Szeg\H{o} asymptotics,
were given in \cite{BU,MM07,SZ02,Zelditch98} (see \cite{Hs14,HLM16} for the 
Kodaira embedding of CR manifolds). 
Let us briefly describe the idea of the proof of Theorem~\ref{t-embleviflat}.  
Using the fact that $\Pi_{k,s}\mathcal{A}_k$ is a semi-classical FIO
and the freedom to choose $\mathcal{A}_k$,
we show in Lemma~\ref{l-gue140916II} that for $k$ large enogh, 
for every $\ell\in\mathbb N$ the $\cC^\ell$ CR sections of $L^k$
give local coordinates at all points of $X$. Hence we find
a $\cC^\ell$ CR immersion $\Phi_{k}:X\To\Complex\mathbb P^N$.
In contrast to the K\"ahler or symplectic case we do not show that
$\Phi_{k}$ is injective. Rather, we use the fact that $\Phi_{k}$ separates points in
the neighborhood of the diagonal in $X\times X$ and construct
(by using Theorems~\ref{t-gue150129} and~\ref{t-main})
another $\cC^\ell$ CR map $\Psi_{m}:X\To\Complex\mathbb P^{N'}$
given by sections of a high power $L^m$,
which separates points outside a certain distance of the diagonal.
Therefore, the map $(\Phi_{k},\Psi_{m}):X\To\Complex\mathbb P^{N}\times\Complex\mathbb P^{N'}$
is injective and hence a $\cC^\ell$ embedding, which composed with the Segre embedding 
\eqref{e:segre} yields an embedding $X$ to $\Complex\mathbb P^{(N+1)(N'+1)-1}$.

The paper is organized as follows. In Section \ref{s:prelim} we collect some notations, terminology, 
definitions and statements we use throughout. In Section~\ref{s-gue140824}, 
we give an explicit formula for the semi-classical Kohn Laplacian $\Box^{(q)}_{b,k}$ 
in local coordinates and we determine the characteristic manifold for $\Box^{(q)}_{b,k}$. 
In Section~ \ref{s-gue140824I} we exhibit a semi-classical Hodge decomposition for $\Box^{(q)}_{b,k}$. 
In Section~\ref{s-gue140909}, we establish the regularity of the Szeg\H{o} projection 
and we prove Theorem~\ref{t-mainb}. 
In Section~\ref{s-gue140912}, by using the semi-classical Hodge decomposition theorem 
established in Section~\ref{s-gue140824I} and the regularity for the Szeg\H{o} projection, 
we prove Theorem~\ref{t-gue150129} and Theorem~\ref{t-main}. In Section~\ref{s-gue140915}, 
we prove Theorem \ref{t-embleviflat}.

\section{Preliminaries}\label{s:prelim}

In this section we introduce useful notions from semi-classical
analysis and CR geometry. We then
present background and examples of Levi-flat CR manifolds.
Finally, we treat an explicit example of Szeg\H{o} kernel of a positive line bundle.
\subsection{Definitions and notations from semi-classical analysis} \label{s-ssnI}
We use the following notations: $\mathbb N=\set{1,2,\ldots}$, $\mathbb N_0=\mathbb N\cup\set{0}$, $\Real$ 
is the set of real numbers, $\ol\Real_+:=\set{x\in\Real;\, x\geq0}$.
For a multiindex $\alpha=(\alpha_1,\ldots,\alpha_n)\in\mathbb N_0^n$
we set $\abs{\alpha}=\alpha_1+\ldots+\alpha_n$. For $x=(x_1,\ldots,x_n)$ we write
\[
\begin{split}
&x^\alpha=x_1^{\alpha_1}\ldots x^{\alpha_n}_n,\quad
 \pr_{x_j}=\frac{\pr}{\pr x_j}\,,\quad
\pr^\alpha_x=\pr^{\alpha_1}_{x_1}\ldots\pr^{\alpha_n}_{x_n}=
\frac{\pr^{\abs{\alpha}}}{\pr x^\alpha}\,\cdot
\end{split}
\]
Let $z=(z_1,\ldots,z_n)$, $z_j=x_{2j-1}+ix_{2j}$, $j=1,\ldots,n$, be coordinates of $\Complex^n$.
We write
\[
\begin{split}
&z^\alpha=z_1^{\alpha_1}\ldots z^{\alpha_n}_n\,,\quad\ol z^\alpha=\ol z_1^{\alpha_1}\ldots\ol z^{\alpha_n}_n\,,\quad\pr_{z_j}=\frac{\pr}{\pr z_j}=
\frac{1}{2}\Big(\frac{\pr}{\pr x_{2j-1}}-i\frac{\pr}{\pr x_{2j}}\Big),\\
&\pr_{\ol z_j}=\frac{\pr}{\pr\ol z_j}=\frac{1}{2}\Big(\frac{\pr}{\pr x_{2j-1}}+i\frac{\pr}{\pr x_{2j}}\Big),\ \ \pr^\alpha_z=\pr^{\alpha_1}_{z_1}\ldots\pr^{\alpha_n}_{z_n}=\frac{\pr^{\abs{\alpha}}}{\pr z^\alpha}\,,\\&
\pr^\alpha_{\ol z}=\pr^{\alpha_1}_{\ol z_1}\ldots\pr^{\alpha_n}_{\ol z_n}=
\frac{\pr^{\abs{\alpha}}}{\pr\ol z^\alpha}\,\cdot
\end{split}
\]

Let $M$ be a $\cC^\infty$ orientable paracompact manifold. 
We let $TM$ and $T^*M$ denote the tangent bundle of $M$ and the cotangent 
bundle of $M$ respectively.
The complexified tangent bundle of $M$ and the complexified cotangent bundle of $M$ 
will be denoted by $\Complex TM$ or $TM\otimes_\Real\Complex$ and 
$\Complex T^*M$ or $T^*M\otimes_\Real\Complex$ respectively. 
We denote by $\langle\,\cdot\,,\cdot\,\rangle$ 
the pointwise duality between $TM$ and $T^*M$.
We extend $\langle\,\cdot\,,\cdot\,\rangle$ bilinearly to 
$TM\otimes_\Real\Complex\times T^*M\otimes_\Real\Complex $.

Let $E$ be a $\cC^\infty$ vector bundle over $M$. The fiber of $E$ at $x\in M$ will be denoted by $E_x$.
Let $F$ be another vector bundle over $M$. We write 
$F\boxtimes E^*$ to denote the vector bundle over $M\times M$ 
with fiber over $(x, y)\in M\times M$ 
consisting of the linear maps from $E_x$ to $F_y$.  

Let $Y\subset M$ be an open set and take any $L^2$ inner product on 
$\cC^\infty_0(Y,E)$. By using this $L^2$ inner product, in this paper, 
we will consider  a distribution section of $E$ over $Y$ is a continuous 
linear form on $\cC^\infty_0(Y,E)$. From now on, let $\mathscr D'(Y, E)$ 
denote the space of distribution sections of $E$ over $Y$ and let 
$\mathscr E'(Y, E)$ be the subspace of $\mathscr D'(Y, E)$ whose 
elements have compact support in $Y$.
For $m\in\Real$, we let $H^m(Y, E)$ denote the Sobolev space
of order $m$ of sections of $E$ over $Y$. 
Put $H^m_{\rm loc\,}(Y, E)=\big\{u\in\mathscr D'(Y, E);\, \varphi u\in H^m(Y, E),
      \, \forall\varphi\in \cC^\infty_0(Y)\big\}$, 
      $H^m_{\rm comp\,}(Y, E)=H^m_{\rm loc}(Y, E)\cap\mathscr E'(Y, E)$.

The Schwartz kernel theorem asserts that for any continuous linear operator
\[A:\cC^\infty_0(M, E)\To \mathscr D'(M,F)\]
 there exists a unique distribution 
$A(\cdot,\cdot)\in \mathscr{D}'(M\times M,F\boxtimes E^*)$ such that
$(Au,v)=(A(\cdot,\cdot),v\otimes u)$ for any $u\in\cC^\infty_0(M,E)$, 
$v\in\cC^\infty_0(M,F^*)$ (see \cite[Theorems\,5.2.1, 5.2.6]{Hor1:83}, 
\cite[Thorem\,B.2.7]{MM07}).
The distribution $A(\cdot,\cdot)$ is
called the Schwartz distribution kernel of $A$.
We say that $A$ is properly supported if the canonical projections on the two factors restricted to
$\supp A(\cdot,\cdot)\subset M\times M$ are proper. 
If $A(\cdot,\cdot)\in\cC^\infty(M\times M, F\boxtimes E^*)$, 
we say that $A$ is a \emph{smoothing operator} and we write $A\equiv0$.
Furthermore, $A$ is smoothing if and only if for all $N\geq0$ and $s\in\Real$,
$A: H^s_{\rm comp\,}(M, E)\To H^{s+N}_{\rm loc\,}(M, F)$
is continuous.

Let $W_1$, $W_2$ be open sets in $\Real^N$ and let $E$ and $F$ be complex 
Hermitian vector bundles over $W_1$ and $W_2$. 
Let $s, s'\in\Real$ and $n_0\in\mathbb\Real$.
For a $k$-dependent continuous function $F_k:H^s_{{\rm comp\,}}(W_1,E)\To H^{s'}_{{\rm loc\,}}(W_2,F)$
we write $F_k=O(k^{n_0}):H^s_{{\rm comp\,}}(W_1,E)\To H^{s'}_{{\rm loc\,}}(W_2,F)$, 
if for any $\chi_0\in \cC^\infty(W_2), \chi_1\in \cC^\infty_0(W_1)$, there is a positive constant $c>0$ independent of $k$, such that
$\norm{(\chi_0F_k\chi_1)u}_{s'}\leq ck^{n_0}\norm{u}_{s}$, $\forall u\in H^s_{{\rm loc\,}}(W_1,E)$,
where $\norm{\cdot}_s$ denotes the usual Sobolev norm of order $s$. We write
$F_k=O(k^{-\infty}):H^s_{{\rm comp\,}}(W_1,E)\To H^{s'}_{{\rm loc\,}}(W_2,F)$,
if $F_k=O(k^{-N}):H^s_{{\rm comp\,}}(W_1,E)\To H^{s'}_{{\rm loc\,}}(W_2,F)$, for every $N>0$. Similarly, let $\ell\in\mathbb N$, for a $k$-dependent continuous function $G_k:\cC^\infty_0(W_1,E)\To\cC^\ell(W_2,F)$
we write $G_k=O(k^{-\infty}):\cC^\infty_0(W_1,E)\To\cC^\ell(W_2,F)$,
if for any $\chi_0\in \cC^\infty(W_2), \chi_1\in \cC^\infty_0(W_1)$ and $N>0$, there are positive constants $c>0$ and $M\in\mathbb N_0$ independent of $k$, such that 
$\norm{(\chi_0G_k\chi_1)u}_{\cC^\ell(W_2,F)}\leq ck^{-N}\norm{u}_{\cC^M(W_1,E)}$, $\forall u\in\cC^\infty_0(W_1,E)$.

A $k$-dependent continuous operator
$A_k:\cC^\infty_0(W_1,E)\To\mathscr D'(W_2,F)$ is called $k$-negligible on $W_2\times W_1$
if for $k$ large enough $A_k$ is smoothing  and for any $K\Subset W_2\times W_1$, any 
multi-indices $\alpha$, $\beta$ and any $N\in\mathbb N$ there exists $C_{K,\alpha,\beta,N}>0$
such that 
\begin{equation}\label{e-gue13628III}
\abs{\pr^\alpha_x\pr^\beta_yA_k(x, y)}\leq C_{K,\alpha,\beta,N}k^{-N}\,,\:\: \text{on $K$}.
\end{equation}

Let $C_k:\cC^\infty_0(W_1,E)\To\mathscr D'(W_2,F)$
be another $k$-dependent continuous operator. We write $A_k\equiv C_k\mod O(k^{-\infty})$ (on $W_2\times W_1$) or 
$A_k(x,y)\equiv C_k(x,y)\mod O(k^{-\infty})$ (on $W_2\times W_1$)
if $A_k-C_k$ is $k$-negligible on $W_2\times W_1$. 

Similarly, for $\ell\in\mathbb N_0$, $A_k:\cC^\infty_0(W_1,E)\To\mathscr D'(W_2,F)$ 
is called $k$-negligible in the $\cC^\ell$ norm on $W_2\times W_1$ if 
$A_k(x, y)\in \cC^\ell(W_2\times W_1,E_y\boxtimes F_x)$ for $k$ large and 
\eqref{e-gue13628III} holds for multi-indices $\alpha$,
$\beta$ with $\abs{\alpha}+\abs{\beta}\leq\ell$.

Let $C_k:\cC^\infty_0(W_1,E)\To\mathscr D'(W_2,F)$
be another $k$-dependent continuous operator. We write $A_k\equiv C_k\mod O(k^{-\infty})$ 
in the $\cC^\ell$ norm (on $W_2\times W_1$) or 
$A_k(x,y)\equiv C_k(x,y)\mod O(k^{-\infty})$ in $\cC^\ell$ norm (on $W_2\times W_1$)
if $A_k-C_k$ is $k$-negligible in $\cC^\ell$ norm on $W_2\times W_1$.  

Let $B_k:L^2(X,L^k)\To L^2(X,L^k)$ be a continuous operator. Let $s$, $s_1$ be local trivializing sections of $L$ on open sets $D_0\Subset M$, $D_1\Subset M$ respectively, $\abs{s}^2_{h}=e^{-2\phi}$, $\abs{s_1}^2_{h}=e^{-2\phi_1}$. The localized operator (with respect to the trivializing sections $s$ and $s_1$) of $B_k$ is given by 
\begin{equation} \label{e-gue140824II}
\begin{split}
B_{k,s,s_1}:L^2(D_1)\cap \mathscr E'(D_1)\To L^2(D),\:\:
u\longmapsto e^{-k\phi}s^{-k}B_k(s^k_1e^{k\phi_1}u)=U^{-1}_{k,s}B_kU_{k,s_1},
\end{split}
\end{equation} 
and let $B_{k,s,s_1}(x,y)\in\mathscr D'(D\times D_1)$ be the distribution kernel of $B_{k,s,s_1}$.  We write
$B_k=O(k^{n_0}):H^s(X,L^k)\To H^{s'}(X,L^k)$, $n_0\in\mathbb\Real$, 
if for all local trivializing sections $s, s_1$ on $D$ and $D_1$ respectively, we have 
$B_{k,s,s_1}=O(k^{n_0}):H^s_{{\rm comp\,}}(D_1)\To H^{s'}_{{\rm loc\,}}(D)$. 
We write $B_k=O(k^{-\infty}):H^s(X,L^k)\To H^{s'}(X,L^k)$, $n_0\in\mathbb\Real$, 
if for all local trivializing sections $s, s_1$ on $D$ and $D_1$ respectively, we have 
$B_{k,s,s_1}=O(k^{-\infty}):H^s_{{\rm comp\,}}(D_1)\To H^{s'}_{{\rm loc\,}}(D)$. 
Fix $\ell\in\mathbb N$. We write $B_k=O(k^{-\infty}):\cC^\infty(X,L^k)\To\cC^\ell(X,L^k)$, 
if for all local trivializing sections $s, s_1$ on $D$ and $D_1$ respectively, we have 
$B_{k,s,s_1}=O(k^{-\infty}):\cC^\infty_0(D_1)\To\cC^\ell(D)$. 
We recall semi-classical symbol spaces (see Dimassi-Sj\"ostrand~\cite[Chapter 8]{DiSj99}):
\begin{defn} \label{d-gue140826}
Let $W$ be an open set in $\Real^N$. Let 
\begin{gather*}
S(1;W):=\Big\{a\in \cC^\infty(W)\,|\, \forall\alpha\in\mathbb N^N_0: 
\sup_{x\in W}\abs{\pr^\alpha a(x)}<\infty\Big\},\\
S^0_{{\rm loc\,}}(1;W):=\Big\{(a(\cdot,k))_{k\in\N}\,|\, \forall\alpha\in\mathbb N^N_0,
\forall \chi\in\cC^\infty_0(W)\,:\:\sup_{k\in\N}\sup_{x\in W}\abs{\pr^\alpha a(x,k)}<\infty\Big\}\,.
\end{gather*}
For $m\in\Real$ let
$S^m_{{\rm loc}}(1;W)=\Big\{(a(\cdot,k))_{k\in\N}\,|\,(k^{-m}a(\cdot,k))
\in S^0_{{\rm loc\,}}(1;W)\Big\}$. 
So $a(\cdot,k))\in S^m_{{\rm loc}}(1;W)$ if for every $\alpha\in\mathbb N^N_0$ 
and $\chi\in\cC^\infty_0(W)$, there
exists $C_\alpha>0$, such that $\abs{\pr^\alpha (\chi a(\cdot,k))}\leq C_\alpha k^{m}$ on $W$. 
 
Consider a sequence $a_j\in S^{m_j}_{{\rm loc\,}}(1;W)$, $j\in\N_0$, where $m_j\searrow-\infty$, 
and let $a\in S^{m_0}_{{\rm loc\,}}(1;W)$. We say that 
$a(\cdot,k)\sim
\sum\limits^\infty_{j=0}a_j(\cdot,k),\:\:\text{in $S^{m_0}_{{\rm loc\,}}(1;W)$}$, 
if for every
$\ell\in\N_0$ we have $a-\sum^{\ell}_{j=0}a_j\in S^{m_{\ell+1}}_{{\rm loc\,}}(1;W)$ .  
For a given sequence $a_j$ as above, we can always find such an asymptotic sum 
$a$, which is unique up to an element in 
$S^{-\infty}_{{\rm loc\,}}(1;W)=S^{-\infty}_{{\rm loc\,}}(1;W):=\cap _mS^m_{{\rm loc\,}}(1;W)$. 

We say that $a(\cdot,k)\in S^{m}_{{\rm loc\,}}(1;W)$ is a classical symbol on $W$ of order $m$ if 
\begin{equation} \label{e-gue13628I} 
a(\cdot,k)\sim\sum\limits^\infty_{j=0}k^{m-j}a_j\: \text{in $S^{m_0}_{{\rm loc\,}}(1;W)$},\ \ a_j(x)\in 
S_{{\rm loc\,}}(1),\ j=0,1\ldots.
\end{equation} 
The set of all classical symbols on $W$ of order $m_0$ is denoted by 
$S^{m_0}_{{\rm loc\,},{\rm cl\,}}(1;W)=S^{m_0}_{{\rm loc\,},{\rm cl\,}}(1;W)$. 
\end{defn}
\begin{defn} \label{d-gue13628I}
Let $W$ be an open set in $\Real^N$. A semi-classical pseudodifferential 
operator on $W$ of order $m$ and classical symbol is a $k$-dependent continuous operator 
$A_k:\cC^\infty_0(W)\To \cC^\infty(W)$ such that the distribution kernel $A_k(x,y)$ 
is given by the oscillatory integral
\begin{equation}\label{psk-def}
\begin{split}
A_k(x,y)&\equiv\frac{k^N}{(2\pi)^N}\int e^{ik\langle x-y,\eta\rangle}a(x,y,\eta,k)d\eta
\mod O(k^{-\infty}),\\
&a(x,y,\eta,k)\in S^m_{{\rm loc\,},{\rm cl\,}}(1;W\times W\times\Real^N).
\end{split}
\end{equation}
We shall identify $A_k$ with $A_k(x,y)$. It is clear that $A_k$ 
has a unique continuous extension $A_k:\mathscr E'(W)\To\mathscr D'(W)$. 
Moreover, it is well-known \cite{GrSj94} that there is a symbol
\begin{equation}\label{ps-symb}
\alpha(x,\eta,k)\in S^m_{{\rm loc\,},{\rm cl\,}}(1;W\times\Real^N)=
S^m_{{\rm loc\,},{\rm cl\,}}(1;T^*W)
\end{equation}
independ on $y$ 
such that 
\begin{equation}\label{ps-def}
A_k(x,y)\equiv\frac{k^N}{(2\pi)^N}\int e^{ik\langle x-y,\eta\rangle}\alpha(x,\eta,k)d\eta
\mod O(k^{-\infty}).
\end{equation}
\end{defn} 

\subsection{CR manifolds and bundles} \label{s-su}
A Cauchy-Riemann (CR) manifold (of hypersurface type)
is a pair $(X,T^{1,0}X)$ where $X$ is a smooth manifold of dimension $2n-1$, $n\geqslant2$, 
and $T^{1,0}X$ is a sub-bundle of the complexified tangent bundle 
$\mathbb{C}TX:=\mathbb{C}\otimes TX$, of rank $(n-1)$, such that $T^{1,0}X\cap  \ol{T^{1,0}X}=\set{0}$ and the set of 
smooth sections of $T^{1,0}X$ is closed under the Lie
bracket. We call $T^{1,0}X$ the
CR structure of $X$ and we denote $T^{0,1}X:=\ol{T^{1,0}X}$.

We say that $(X,T^{1,0}X)$ is a \emph{Levi-flat CR manifold} if the set of smooth sections of 
$T^{1,0}X\oplus T^{0,1}X$ is closed under the Lie bracket.
If $X$ is Levi-flat, there exists a smooth foliation of $X$, of real codimension one and whose leaves are 
complex manifolds: it is obtained by integrating the distribution $(T^{1,0}X\oplus T^{0,1}X)\cap TX$.

In this paper, we assume throughout that $X$ is an orientable Levi-flat manifold.

Fix a smooth Hermitian metric $\langle\,\cdot\,|\,\cdot\,\rangle$ on $TX\otimes_\Real\Complex$ so that $T^{1,0}X$
is orthogonal to $T^{0,1}X$ and $\langle\,u\,|\,v\,\rangle$ is real if $u$, $v$ are real tangent vectors. 
Then locally there is a real non-vanishing vector field $T$ of length one which is pointwise orthogonal to
$T^{1, 0}X\oplus T^{0, 1}X$. $T$ is unique up to the choice of sign. 
For $u\in TX\otimes_\Real\Complex$, we write $\abs{u}^2:=\langle\,u\,|\,u\,\rangle$. 
Denote by $\Lambda^{1,0}(T^*X)$ and $\Lambda^{0,1}(T^*X)$ the dual bundles of $T^{1,0}X$ and $T^{0,1}X$, respectively.
They can be identified with subbundles of the complexified cotangent bundle $T^*X\otimes_\Real\Complex$.

Define the vector bundle of $(0, q)$-forms by $\Lambda^{0,q}(T^*X):=\Lambda^{q}(\Lambda^{0,1}(T^*X))$. 
The Hermitian metric $\langle\,\cdot\,|\,\cdot\,\rangle$ on $TX\otimes_\Real\Complex$ induces,
by duality, a Hermitian metric on $TX\otimes_\Real\Complex$ and also on the bundles of $(0,q)$ forms 
$\Lambda^{0,q}(T^*X)$, $q=0,1,\ldots,n-1$. We shall also denote all these induced metrics by 
$\langle\,\cdot\,|\,\cdot\,\rangle$. Let $\Omega^{0,q}(D)$ denote the space of smooth sections of 
$\Lambda^{0,q}(T^*X)$ over $D$ and let $\Omega^{0,q}_0(D)$ be the subspace of
$\Omega^{0,q}(D)$ whose elements have compact support in $D$. Similarly, 
if $E$ is a vector bundle over $D$, then we let $\Omega^{0,q}(D, E)$
denote the space of smooth sections of $\Lambda^{0,q}(T^*X)\otimes E$ over $D$ and let 
$\Omega^{0,q}_0(D, E)$ be the subspace of $\Omega^{0,q}(D, E)$ whose elements have compact support in $D$. 

Locally we can choose an orthonormal frame $\omega_1,\ldots,\omega_{n-1}$
of the bundle $\Lambda^{1,0}(T^*X)$. Then $\ol\omega_1,\ldots,\ol\omega_{n-1}$
is an orthonormal frame of the bundle $\Lambda^{0,1}(T^*X)$. The real $(2n-2)$-form
$\omega=i^{n-1}\omega_1\wedge\ol\omega_1\wedge\ldots\wedge\omega_{n-1}\wedge\ol\omega_{n-1}$
is independent of the choice of the orthonormal frame. Thus $\omega$ is globally
defined. Locally there is a real $1$-form $\omega_0$ of length one which is orthogonal to
$\Lambda^{1,0}(T^*X)\oplus\Lambda^{0,1}(T^*X)$. The form $\omega_0$ is unique up to the choice of sign.
Since $X$ is orientable, there is a nowhere vanishing $(2n-1)$ form $Q$ on $X$.
Thus, $\omega_0$ can be specified uniquely by requiring that $\omega\wedge\omega_0=fQ$,
where $f$ is a positive function. Therefore $\omega_0$, so chosen, is globally defined.
\begin{defn}\label{puf}
We call $\omega_0$ the positive $1$-form of unit length orthogonal to 
$\Lambda^{1,0}(T^*X)$ and $\oplus\Lambda^{0,1}(T^*X)$.
\end{defn}
We choose a vector field $T$ so that
\begin{equation}\label{e-suI}
\abs{T}=1\,,\quad \langle\,T\,,\,\omega_0\,\rangle=-1\,.
\end{equation}
Therefore $T$ is uniquely determined. We call $T$ the uniquely determined global real vector field. We have the
pointwise orthogonal decompositions:
\begin{equation} \label{e-suII}
T^*X\otimes_\Real\Complex=\Lambda^{1,0}(T^*X)\oplus\Lambda^{0,1}(T^*X)\oplus\Complex\omega_0,\ \ 
TX\otimes_\Real\Complex=T^{1,0}X\oplus T^{0,1}X\oplus\Complex T.
\end{equation} 
Let $\ddbar_b:\Omega^{0,q}(X)\To\Omega^{0,q+1}(X)$
be the tangential Cauchy-Riemann operator. Let $U\subset X$ be an open set. We say that a function $u\in \cC^\infty(U)$ is Cauchy-Riemann (CR for short) (on $U$) if $\ddbar_b u=0$.
\begin{defn} \label{d-suIII}
Let $L$ be a complex line bundle over a CR manifold $X$. 
We say that $L$ is a Cauchy-Riemann (CR for short) (complex) line bundle over $X$
if its transition functions are CR.
\end{defn}

\begin{defn}
The \emph{Szeg\H{o} kernel} of the pair $(X,L^k)$ is the the Schwartz distribution kernel 
$\Pi_k(\cdot,\cdot)\in\mathscr D'(X\times X,L^k\boxtimes (L^k)^*)$ of the
Szeg\H{o} projection $\Pi_k$ given by \eqref{sz-def}.
\end{defn}

If $X$ is Levi-flat, then the restriction a CR line bundle to any leaf $Y$ of the Levi-foliation 
is a holomorphic line bundle.

From now on, we let $(L,h)$ be a CR line bundle over $X$, where
the Hermitian fiber metric on $L$ is denoted by $h$. We will denote by
$\phi$ the local weights of the Hermitian metric. More precisely, if
$s$ is a local trivializing
section of $L$ on an open subset $D\subset X$, then the local weight of $h$ with respect to $s$ is the function 
$\phi\in \cC^\infty(D,\Real)$ for which
\begin{equation} \label{e-suV}
\abs{s(x)}^2_{h}=e^{-2\phi(x)}\,,\quad x\in D.
\end{equation}
\begin{defn} \label{d-suIII-I}
Let $s$ be a local trivializing section of $L$ on an open subset $D\subset X$ and $\phi$ 
the corresponding local weight as in \eqref{e-suV}.
For $p\in D$, we define the Hermitian quadratic form $M^\phi_p$ on $T^{1,0}_pX$ by
\begin{equation} \label{e-suXIII-II}
M^\phi_p(U, V)=\Big\langle U\wedge\ol V, d\big(\ddbar_b\phi-\pr_b\phi\big)(p)\Big\rangle,\ \ U, V\in T^{1,0}_pX,
\end{equation}
where $d$ is the usual exterior derivative and $\ol{\pr_b\phi}=\ddbar_b\ol\phi$. 
Since $X$ is Levi-flat, the definition of $M^\phi_p$ does not depend on the choice of local trivializations 
(see~\cite[Proposition 4.2]{HsM:12}).
Hence there exists a smooth section $R^L$ of the bundle of Hermitian forms on $T^{1,0}X$
such that $R^L|_D=M^\phi$. We call $R^L$ the curvature of $(L,h)$. 
We say that $(L,h)$, or $R^L$, is positive if $R^L_x$ is positive definite, for every $x\in X$.
We say that $L$ is a positive CR line bundle over $X$ 
if there is a Hermitian fiber metric $h$ on $L$ 
such that the induced curvature $R^L$ is positive.
\end{defn}

In this paper, we assume that $L$ is a positive CR line bundle over a Levi-flat CR manifold
$X$ and we fix a Hermitian fiber 
metric $h$ of $L$ such that the induced curvature $R^L$ is positive. 
Note that a positive line bundle $(L,h)$ in the sense of Definition \ref{d-suIII-I} is positive along
the leaves of the Levi-foliation: its restriction $(L,h)|_Y$ to any leaf $Y$ is positive
(that is, the curvature of the associated Chern connection is positive).

Let $L^k$, $k>0$, be the $k$-th tensor power of the line bundle $L$. 
The Hermitian fiber metric on $L$ induces a Hermitian fiber metric on $L^k$ that we shall denote 
by $h^{k}$. If $s$ is a local trivializing section
of $L$ then $s^k$ is a local trivializing section of $L^k$. 
We write $\ddbar_{b,k}$ to denote the tangential
Cauchy-Riemann operator acting on forms with values in $L^k$, defined locally by
\begin{equation} \label{e-suVI}
\ddbar_{b,k}:\Omega^{0,q}(X, L^k)\To\Omega^{0,q+1}(X, L^k)\,,\quad \ddbar_{b,k}(s^ku):=s^k\ddbar_bu,
\end{equation}
where $s$ is a local trivialization of $L$ on an open subset $D\subset X$ and $u\in\Omega^{0,q}(D)$. 
\subsection{Background on Levi-flat CR manifolds and examples} 
Originally, Levi-flat CR manifolds first arose as Levi-flat real hypersurfaces
in the study of the Levi problem, which asks the characterization of a domain
of holomorphy by Levi pseudoconvexity of its boundary.
While the Levi problem has an affirmative answer for domains in $\mathbb{C}^n$
(by the works of Oka, Bremmerman, Norguet)
or $\mathbb{CP}^n$ (by results of Fujita and Takeuchi), 
Grauert \cite{Gr:58} pointed out that some domains with
Levi-flat boundary give counterexamples to the Levi
problem (see also \cite{Gra:63,Na:63}). These domains do not 
possess any non-constant
holomorphic functions but they are typically endowed
with a positive and ample line bundle, so the relevant function theory
here deals with sections of positive line bundles and meromorphic
functions, see e.\,g.\ \cite{Gra:63}. From an analytic point
of view this leads to the study of $\ddbar$-Neumann problem
in this situation \cite{Koh73,Ta:83}. 

On the other hand, if we look upon Levi-flat CR manifolds
intrinsically, the function theory should deal with CR functions or
sections, that is,  functions or sections 
which are holomorphic along the leaves
of the Levi foliation. By a theorem of Inaba \cite[Theorem 1]{In92}, 
every continuous CR function on a compact Levi-flat
CR manifold is constant along leaves of the Levi foliation. If the
foliation has dense leaves, it follows that continuous CR functions
are constant. Hence, as in the case of compact complex manifolds,
we are led to perform function theory with sections of positive 
line bundles.
The study of CR meromorphic functions on compact
Levi-flat CR manifolds can also be seen as an alternative generalization 
of function theory on compact compact complex manifolds 
(the leaves of the foliation).

We present here a list of interesting Levi-flat
manifolds carrying a positive line bundle.

\smallskip

\noindent
\textbf{(i) Linear hypersurfaces in tori.} Let $n\geq2$ and let $\Gamma$ be the lattice in 
$\mathbb{C}^n$ generated by $\mathbb{R}$-linearly independent vectors
$w_j=(w_{j1},\ldots,w_{jn})$, $j=1,\ldots,2n$, where $w_1 = (1,0,\ldots,0)$
and $\operatorname{Re}w_{j1}=0$ for $j=2,\ldots,2n$.
Let $T^n$ be the torus $\mathbb{C}^n/\Gamma$ and let $\pi:\mathbb{C}^n\to T^n$
be the natural map.
For $c\in\mathbb{R}$ set $X_c=\pi\big(\{z\in\mathbb{C}^n:\operatorname{Re}z_1=c\}\big)$.
Then $X_c$ is a compact Levi-flat hypersurface in $T^n$. If $T^n$ is projective, $X_c$
carries a positive CR line bundle obtained by restriction of a positive holomorphic bundle on $T^n$. 

This construction was used by Grauert in order to give an example of a pseudoconvex domain that is not
holomorphically convex, see \cite{Gr:58}, \cite[p.\,387]{Na:63}.
Namely, let $U\subset\mathbb{C}^n$ be defined by 
$0 < {\rm Re} z_1 < 1$
and let $D=\pi(U)$. Then every holomorphic function
on $D$ is constant.

\smallskip

\noindent
\textbf{(ii) Grauert tubes in topologically trivial holomorphic line bundles.} 
Let $M$ be a compact projective manifold and $\pi:F\to M$ a topologically trivial
holomorphic line bundle. There exists a finite open covering 
$(U_\alpha)$ of $M$ and holomorphic
frames $e_\alpha$ over $U_\alpha$ with $e_\beta=g_{\alpha\beta}e_\alpha$
on $U_\alpha\cap U_\beta$ for holomorphic transition functions  
$g_{\alpha\beta}:U_\alpha\cap U_\beta\to\mathbb{C}^*$ such that 
$|g_{\alpha\beta}|\equiv1$. We define a Hermitian metric $h$ on $F$
by setting $|e_\alpha|_h=1$. Then $X_c=\{v\in F:|v|_h=c\}$, $c>0$,
is a real analytic Levi-flat hypersurface in $F$, cf.\ \cite[Satz\,2]{Gra:63}.
If $L\to M$ is a positive line bundle, then $\pi^*L|_{X_c}$ is
a positive CR line bundle.
The Levi foliation of $X_c$ has dense leaves if
and only if all tensor powers $F^k$ for $k\neq0$
are holomorphically non-trivial.

Again, this construction is related to the Levi problem for 
pseudoconvex domains. Grauert \cite{Gra:63} showed that
$D_c=\{v\in F:|v|_h<c\}$, for $c>0$, are meromorphically convex but not
holomorphically convex domains. 

\smallskip

\noindent
\textbf{(iii) Circle bundles over projective manifolds.} 
Let $M$ be a projective compact manifold. Let $\pi:\mathcal{D}\to M$ 
be a holomorphic
fiber bundle  over $M$ with fiber the unit disc 
$\mathbb{D}\subset\mathbb{C}$.
It can be easily seen that holomorphic trivializations form a trivializing cover, that is, 
the transition functions are locally constant.
The disc bundle is thus isomorphic to a bundle of the form
$\mathcal{D}_\rho:=M\times_{\rho}\mathbb{D}:=\widetilde{M}\times\mathbb{D}\big{/}\!\!\sim$\,,
where $\rho:\pi_1(M)\to\operatorname{Aut}(\mathbb{D})$ is a group homomorphism,
$\widetilde{M}$ is the universal cover of $M$ and the relation equivalence $\sim$
is given by $(x,\zeta)\sim(\gamma x, \rho(\gamma)\zeta)$, for $x\in \widetilde{M}$,
$\zeta\in\mathbb{D}$ and $\gamma\in\pi_1(M)$.
Since $\operatorname{Aut}(\mathbb{D})$ is a group of biholomorphisms of 
$\mathbb{D}$ consisting of M\"obius transformations
preserving $\mathbb{D}$, acting on $\CP^1$ and fixing the unit
circle $\mathbb{S}^1=\partial\mathbb{D}$, it follows that a holomorphic disc bundle is canonically
embedded in the complex manifold  $N_\rho := M \times_{\rho}\mathbb{CP}^1\to M$,
and the boundary of $\mathcal{D}_\rho$ in $N_\rho$ is a compact Levi-flat CR manifold
$X_\rho=M \times_{\rho}\partial\mathbb{D}$.
Note that $N_\rho$ is a projective manifold by \cite[Theorem\,8]{Kod:54}, so any projective
embedding of $N_\rho$ induces a positive CR line bundle on $X_\rho$.

Other positive CR line bundles over $X_\rho$ are given by the pullback 
$\pi^*L|_{X_\rho}$ of any positive line bundle $L\to M$.
It was shown in \cite[Main Theorem]{Ad:14}
that if $M$ is a compact Riemann surface,
$\pi^*L|_{X_\rho}$ is not $C^\infty$ ample if $\mathcal{D}_\rho$ has a unique 
non-holomorphic harmonic section
$h$ with $\operatorname{rank}_\mathbb{R} dh = 2$ on an open dense set.
A concrete example when the latter situation occurs is 
obtained  by taking 
$M$ to be a hyperbolic compact Riemann surface, regarding 
$\pi_1(M)\subset\operatorname{Aut}(\mathbb{D})$
as a Fuchsian representation and taking a non-trivial quasiconformal deformation
$\rho:\pi_1(M)\to\operatorname{Aut}(\mathbb{D})$ of
$\Gamma$, see \cite{Ad:14}.

The present construction was used in \cite[Section 2]{DeDu:16} in order 
to construct Levi-flat hypersurfaces with nontrivial Euler class in complex surfaces of general type. 

A generalization, particularly relevant in the context of
the Ohsawa-Sibony embedding theorem, is the following.
Let $\rho:\pi_1(M)\to\operatorname{Diff}(\mathbb{S}^1)$ 
be a group homomorphism, whose image is not necessarily contained in
the M\"obius transformation group.
Then $X_\rho=M \times_{\rho}\mathbb{S}^1$ is Levi-flat 
and if $\pi:X_\rho\to M$ is the canonical projection
and if $L\to M$ is positive, then $\pi^*L$ is a positive CR
line bundle on $X_\rho$. Theorem \ref{t-embleviflat}
gives a realization of these $X_\rho$ as
$\cC^\ell$ CR submanifolds in complex projective space 
for arbitrary large $\ell$, while it is not clear a priori whether we can construct its 
filling $\mathcal{D}_\rho$ and its ambient $N_\rho$. 
Actually, for some special choice of $M$ and $\rho$, it can be shown that $X_\rho$
cannot be realized as a $\cC^\infty$ Levi-flat real hypersurface, see 
\cite{Barret:90,In92}. For example, there does not exist a $\cC^\infty$ Levi-flat hypersurface 
$X$ in a two-dimensional complex manifold such that the Levi foliation 
of $X$ is homeomorphic to Reeb's foliation of $\mathbb{S}^3$.
An open question is whether such Levi-flat manifolds $X_\rho$ can be realized as 
$\cC^\ell$ Levi-flat
real hypersurfaces for some finite $\ell\in\N$.

\smallskip

\noindent
\textbf{(iv) Levi-flat boundaries of Stein domains.} In the examples (i) and (ii),
Grauert constructed Levi-flat hypersurfaces bounding pseudoconvex non-Stein
domains. Nemirovski \cite{Nm:99} constructed examples of compact complex surfaces which contain
a smooth Levi-flat hypersurface splitting the surface in two Stein domains.
This construction admits a generalization to  complex manifolds of arbitrary 
dimension as noted in \cite{Nm:99}, \cite[p.\,168]{Oh:07}.

%

Consider a holomorphic $\mathbb{C}^*$-bundle 
$B\to S$ where $S$ is a projective manifold and
the action of $\Z$ generated by 
$(w, z)\to(w, 2z)$ in terms of the local
coordinate $w$ of $S$ and the fiber coordinate $z$. 
Then, for any meromorphic section $s$ of the 
associated $\widehat{\Complex}$-bundle associated to $B$ such that its zeros
and poles are mutually disjoint and of order one, a Levi flat hypersurface $X$ 
in a torus bundle $B/\Z \to S$
is obtained as the closure of the union of $\Real^*s(x)/\Z$, 
where $x$ runs through the complement of $s^{-1}(0)\cup s^{-1}(\infty)$. 
If $S\setminus s^{-1}(0)\cup s^{-1}(\infty)$ is Stein, $X$ bounds an annulus 
bundle over a Stein manifold which is Stein (since holomorphic fiber bundles over 
Stein manifolds with one-dimensional Stein fibers
are Stein). If the torus bundle $B/\Z$ is projective, then $X$ carries a
positive line bundle. 

\smallskip

\noindent
\textbf{(v) Fibered Levi-flats in singular holomorphic fibrations.}
Such a fibration stands for a holomorphic map $f : B\to S$ where $B$ is a complex surface
and $S$ is a compact Riemann surface. The fibers are not necessarily connected.
Let $\{p_1,\ldots, p_n\}$ be the singular values of $f$. 
A fibered Levi-flat hypersurface in $B$ has
the form $f^{-1}(\gamma)$, where 
$\gamma\subset S\setminus\{p_1,\ldots, p_n\}$ is a simple closed path.
In \cite[Section 2]{DeDu:16} examples of
fibered Levi-flat hypersurfaces are given, which carry the geometry 
of $\Real^3$, $\mathbb{H}^3$, $S^2\times\Real$, $\mathbb{H}^2\times\Real$, 
$\textrm{Nil}$, or $\textrm{Sol}$. 
In particular, $\mathbb{H}^3$ and $\mathbb{H}^2\times\Real$ are carried by fibered
Levi-flat hypersurfaces in surfaces of general type.

\smallskip
\noindent
\textbf{(vi) Levi-flat hypersurfaces in two dimensional tori an Kummer surfaces.}
For these examples we refer to \cite{Oh:06a,Oh:06b}.

\smallskip

\noindent
\textbf{(vii) Taut Levi-flat foliations.} Let $X$ be a Levi-flat CR $3$-manifold.
The Levi-foliation $\mathcal{F}$ is called taut if  
if there exists a $C^1$ embedded
circle (called transversal) in $X$ which transversely intersects every leaf of $\mathcal{F}$,
cf.\ \cite[Section\,4.4]{Cal:07}.
By results of Sullivan and Rummler \cite[Theorem\,4.31]{Cal:07}, this 
is equivalent to the fact 
that $X$ admits a $C^2$ Riemannian metric for which leaves of $\mathcal{F}$ are minimal surfaces.
Using this characterization one shows \cite[Lemma 13]{MarTor:11}:
\begin{prop}
A compact Levi-flat CR $3$-manifold possesses
a smooth CR line bundle which is positive along leaves if and only if the
Levi foliation is taut.
\end{prop}
Indeed, if $X$ possesses a positive CR line bundle then
the Ohsawa-Sibony embedding theorem implies that $X$ can be CR
embedded in a complex projective space by a $C^2$ map. 
We obtain thus a $C^2$ Riemannian metric on
$X$ by pulling back the Fubini-Study metric. Then, any leaf of $\mathcal{F}$ is minimal
since any complex submanifold in a K\"ahler manifold is minimal.
Conversely, if $X$ is taut, by smoothing a closed transversal
and regarding its intersection with the leaves of $\mathcal{F}$
as a divisor, we can construct a smooth positive CR line bundle on $X$.

\smallskip

\noindent
\textbf{(viii) Positive normal bundle.} An important CR line bundle on 
a Levi-flat CR manifold is the normal line bundle $N_\mathcal{F}$ 
to the Levi foliation $\mathcal{F}$, 
cf.\ \cite[Definition 2.15]{Ad:15}, \cite[p.\,89]{OS00}.
Brunella \cite{Bru08} observed that the positivity
of $N_\mathcal{F}$ implies convexity properties of the complement 
of a Levi-flat hypersurface in a complex
manifold (see \cite{Ad:15} for the converse and the relation to the  
Diederich-Fornaess exponent).
Explicit examples of Levi-flat CR manifold
with positive normal line bundle can be found in
\cite[Example 4.5]{Ad:15}, \cite[Example 4.2]{Bru08}.
In \cite[Th\'eor\`eme 2.2.3]{CanGon:15} the following general result
is proved for
three dimensional compact Levi-flat manifolds: if
the Levi foliation $\mathcal{F}$ has no invariant transverse measure then 
$N_\mathcal{F}$ is positive.

\smallskip

Let us finally note  that if $X$ is a Levi-flat CR manifold and $M$ 
is a projective manifold,
and $L\to X$, $E\to M$ are positive line bundles, then $X\times M$
is a Levi-flat CR manifold possessing the positive line bundle
$L\boxtimes E\to X\times M$. We can also construct examples
of Levi-flat CR manifolds possessing a positive line bundle by 
taking Galois coverings or quotients by discrete groups
of a given Levi-flat manifold with positive line bundle.

\subsection{An explicit example of Szeg\H{o} kernel}\label{SS:exSz}
Let $(L,h^L)$ be a holomorphic line bundle over a compact complex 
manifold $M$ of dimension $n-1$, where $h^L$ is a Hermitian fiber 
metric of $L$. Let $R^L$ be the curvature induced by $h^L$ 
and we assume that $iR^L>0$ on $M$. Consider $X:=M\times S^1$. 
We will identify $S^1$ with $(-\pi,\pi]$. Then, $X$ is a Levi-flat CR manifold 
and the pull-back of $(L,h^L)$ is a positive CR line bundle over $X$, denoted also $(L,h^L)$.
In this simple example, 
we will give an explicit formula for the phase function $\psi(x,y,u)$ and we will 
see that $\psi(x,y,u)$ fails to be positively homogeneous in $u$ and $\Pi_k$ 
is not a Fourier integral operator.  

Fix $k>0$. Taking a Hermitian metric on $T^{1,0}M$ with volume form
$dv_M$ and the metric $d\theta$ on $S^1$, 
we endow $X$ with the product Hermitian metric whose volume
form is $dv_X=dv_M\wedge d\theta$.
We then get natural $L^2$ inner products $(\,\cdot\,|\,\cdot\,)_k$ on $L^2(M,L^k)$ 
and $L^2(X,L^k)$. Let $B_k:L^2(M,L^k)\To{\rm Ker\,}\ddbar$
be the orthogonal projection (Bergman projection). For $f\in L^2(X,L^k)$ we
have the Fourier decomposition 
$f=\sum_{m\in\mathbb Z}e^{im\theta}f_m$ with $f_m\in L^2(M,L^k)$, 
for $m\in\mathbb Z$. We can check that the Szeg\H{o} projection $\Pi_k$ is given by
\begin{equation}\label{e-gue160617}
\begin{split}
\Pi_k:L^2(X,L^k)\To{\rm Ker\,}\ddbar_b,\quad
f=\sum_{m\in\mathbb Z}e^{im\theta}f_m\longmapsto\sum_{m\in\mathbb Z}e^{im\theta}B_kf_m.
\end{split}
\end{equation}
We now study the distribution kernel of $\Pi_k$. 
Let $s$ be a local trivializing section of $L$ on an open set $D\subset M$, 
$\abs{s}^2_{h^L}=e^{-2\phi}$, and let $B_{k,s}$ be the localization of $B_k$ 
with respect to the trivializing section $s$ (see \eqref{e-gue141001}). 
We write $x=(z,x_{2n-1})$, $y=(w,y_{2n-1})$, 
to denote the coordinates of $M\times S^1$, 
where $z=(z_1,\ldots,z_{n-1})$, $w=(w_1,\ldots,w_{n-1})$, denote coordinates
on $M$ and $x_{2n-1}$, $y_{2n-1}$, 
coordinates on $S^1$. By the works of Zelditch~\cite{Zelditch98} 
and Shiffman-Zelditch~\cite{SZ02}, see also \cite{HsM:14}, we know that 
the kernel $B_{k,s}(z,w)$ of $B_{k,s}$ has the form
\begin{equation}\label{e-gue160619}
B_{k,s}(z,w)=e^{ik\varphi(z,w)}b(z,w,k)\  \text{on $D\times D$}, 
\end{equation}
where $\varphi(z,w)\in\cC^\infty(D\times D)$, 
${\rm Im\,}\varphi(z,w)\thickapprox\abs{z-w}^2$,
$b(z,w,k)\sim\sum^\infty_{j=0}k^{n-1-j}b_j(z,w)$ in 
$S^{n-1}_{{\rm loc\,}}(1;D\times D)$ (see Definition~\ref{d-gue140826}). 
From \eqref{e-gue160619} and \eqref{e-gue160617}, 
for any $f\in\cC^\infty_0(D\times(-\pi,\pi])$, we have
\begin{equation}\label{e-gue160619I}
\begin{split}
&(\Pi_{k,s}f)(x)\\&=\sum_{m\in\mathbb Z}e^{imx_{2n-1}}
\int_M\int_{-\pi}^\pi e^{ik\varphi(z,w)}b(z,w,k)
e^{-imy_{2n-1}}f(w,y_{2n-1})\,dy_{2n-1}\,dv_M(w)\\
&=\int_M e^{ik\varphi(z,w)}b(z,w,k)f(w,x_{2n-1})\,dv_M(w)\\
&=\frac{1}{2\pi}\int_M\int_{-\pi}^\pi\int_{\mathbb R} 
e^{ik\varphi(z,w)+i\langle x_{2n-1}-y_{2n-1},\eta\rangle }
b(z,w,k)f(w,y_{2n-1})\,d\eta\, dy_{2n-1}\,dv_M(w)\\
&=\frac{1}{2\pi}\int_M\int_{-\pi}^\pi\int_{\mathbb R} 
e^{ik(\varphi(z,w)+\langle x_{2n-1}-y_{2n-1},u\rangle )}
kb(z,w,k)f(w,y_{2n-1})\,du\,dy_{2n-1}\, dv_M(w)\\
&=\frac{1}{2\pi}\int_M\int_{-\pi}^\pi \Pi_{k,s}(x,y)f(y)\,dv_X(y),
\end{split}
\end{equation}
where
\begin{equation}\label{e-gue160619II}
\Pi_{k,s}(x,y)=\int_{\mathbb R}e^{ik\psi(x,y,u)}s(x,y,u,k)\,du
\end{equation}
with 
\begin{equation}\label{e-gue160620}
\psi(x,y,u)=\varphi(z,w)+\big\langle x_{2n-1}-y_{2n-1},u\big\rangle\,,\quad 
s(x,y,u,k)=\frac{1}{2\pi}kb(z,w,k) .
\end{equation}
Formulas \eqref{e-gue160619II} and \eqref{e-gue160620} 
show that $\Pi_k$ is not a Fourier integral operator
with complex phase. The phase function $\psi(x,y,u)$ in \eqref{e-gue150129}
fails to be positively homogeneous of degree $1$ with respect to $u$.
Note also that \eqref{e-gue160619II} and \eqref{e-gue160620} 
exhibit the Szeg\H{o} kernel in the form given in Theorem \ref{t-gue150129}.

\section{The semi-classical Kohn Laplacian} \label{s-gue140824}

In this section we introduce the Kohn Laplacian $\Box^{(q)}_{b,k}$ acting on
sections of $L^k$ and we determine its local form $\Box^{(q)}_{s,k}$ with respect to a frame
$s$ and its characteristic manifold. We show that the standard symplectic form
of the cotangent bundle is non-degenerate on the characteristic manifold.
This will be used in running the heat equation method in Section
\ref{s-gue140824I}, for solving the eikonal equation \eqref{e-dhmpVIII}
(see Theorems \ref{t-gue140825}, \ref{t-dhlkmimI}, \ref{t-dhlkmimII}).

We start with some notations. For $v\in\Lambda^{0,q}(T^*X)$ we denote by
$v{\wedge}:\Lambda^{0,\bullet}(T^*X)\To\Lambda^{0,\bullet+q}(T^*X)$ 
the exterior multiplication by $v$ and
let $v^{\wedge,*}:\Lambda^{0,\bullet}(T^*X)\To\Lambda^{0,\bullet-q}(T^*X)$ 
be the adjoint of $v\wedge$ 
with respect to $\langle\,\cdot\,|\,\cdot\,\rangle$. Hence, 
$\langle\,v\wedge u\,|\,g\,\rangle=\langle\,u\,|\,v^{\wedge,*}g\,\rangle$, 
for all $u\in\Lambda^{0,p}(T^*X)$, $g\in\Lambda^{0,p+q}(T^*X)$.

For any $r=0,1,\ldots,n-2$, we denote by $\ol{\pr}^{*}_{b,k}:
{\rm Dom\,}\ol{\pr}^{*}_{b,k}\subset L^2_{(0,r+1)}(X, L^k)\To L^2_{(0,r)}(X, L^k)$
the Hilbert space adjoint of $\ddbar_{b,k}$ with respect to $(\,\cdot\,|\,\cdot\, )_{k}$.
Let $\Box^{(q)}_{b,k}$ denote the (Gaffney extension of the) \emph{Kohn Laplacian} given by 
\begin{equation}\label{e-suIX}
\begin{split}
{\rm Dom\,}\Box^{(q)}_{b,k}=
\{u\in{\rm Dom\,}\ddbar_{b,k}&\cap{\rm Dom\,}\ol{\pr}^{*}_{b,k}
\subset L^2_{(0,q)}(X,L^k);\\
&\, \ddbar_{b,k}u\in{\rm Dom\,}\ol{\pr}^{*}_{b,k},\ \ol{\pr}^{*}_{b,k}u\in{\rm Dom\,}\ddbar_{b,k}\}\,,
 \end{split}
\end{equation}
and $\Box^{(q)}_{b,k}u=\ddbar_{b,k}\ol{\pr}^{*}_{b,k}u+\ol{\pr}^{*}_{b,k}\ddbar_{b,k}u$ for 
$s\in {\rm Dom\,}\Box^{(q)}_{b,k}$. Note that ${\rm Ker\,}\Box^{(0)}_{b,k}={\rm Ker\,}\ddbar_{b,k}$. 
By a result of Gaffney \cite[Proposition\,3.1.2]{MM07}, $\Box^{(q)}_{b,k}$ is a positive self-adjoint operator. 

Let $s$ be a local trivializing of $L$ on an open subset $D\subset X$. 
By using the map \eqref{isom} we have define
localizations  $\ddbar_{s,k}$ of $\ddbar_{b,k}$, $\ol{\pr}^*_{s,k}$ of
$\ol{\pr}^*_{b,k}$ and $\Box^{(q)}_{s,k}$ of $\Box^{(q)}_{b,k}$
with respect to $s$ through unitary identifications:
\begin{equation} \label{e-gue140824III} 
\left\{\begin{aligned}  
\cC^\infty_0(D,\Lambda^{0,q}(T^*X))&\longleftrightarrow \cC^\infty_0(D,L^k\otimes \Lambda^{0,q}(T^*X)) \\
u&\longleftrightarrow \Td u=U_{k,s}u, \ \ u=U_{k,s}^{-1}\Td u,\\
\ddbar_{s,k}&\longleftrightarrow\ddbar_{b,k},\ \ \ddbar_{s,k}u=U_{k,s}^{-1}\ddbar_{b,k}U_{k,s},\\
\ol{\pr}^*_{s,k}&\longleftrightarrow\ol{\pr}^*_{b,k},\ \ 
\ol{\pr}^*_{s,k}u=U_{k,s}^{-1}\ol{\pr}^*_{b,k}U_{k,s},\\
\Box^{(q)}_{s,k}&\longleftrightarrow\Box^{(q)}_{b,k},\ \ 
\Box^{(q)}_{s,k}u=U_{k,s}^{-1}\Box^{(q)}_{b,k}U_{k,s}.
\end{aligned}
\right.
\end{equation} 
It is easy to see that 
\begin{equation} \label{e-gue140824IV}
\ddbar_{s,k}=\ddbar_b+k(\ddbar_b\phi){\wedge}\,, \ \ 
\ol{\pr}^*_{s,k}=\ol{\pr}^*_b+k(\ddbar_b\phi)^{\wedge,*}
\end{equation} 
where $\ol{\pr}^*_b:\Omega^{0,q+1}(X)\To\Omega^{0,q}(X)$ is the formal 
adjoint of $\ddbar_b$ with respect to $(\,\cdot\,|\,\cdot\,)$, and 
\begin{equation} \label{e-gue140824VI}
\Box^{(q)}_{s,k}=\ddbar_{s,k}\ol{\pr}^*_{s,k}+\ol{\pr}^*_{s,k}\ddbar_{s,k}.
\end{equation}
The operator $\Box^{(q)}_{s,k}$ will be called the \emph{localized Kohn Laplacian}.

Let us choose a smooth orthonormal frame
$\{e_j\}_{j=1}^{n-1}$
for $\Lambda^{0,1}(T^*X)$ on $D$.
Let $\{Z_j\}_{j=1}^{n-1}$ denote the dual frame of $T^{0,1}X$. 
Let $Z^*_j$ be the formal adjoint of $Z_j$ with respect to $(\,\cdot\,|\,\cdot\,)$, $j=1,\ldots,n-1$, that is,
$(Z_jf\ |\ h)=(f\ |\ Z_j^*h), f, h\in \cC^\infty_0(D)$. 
\begin{prop} [{\cite[Proposition 3.1]{Hs14}}]\label{p-gue140824} 
With the notations used before, using the identification \eqref{e-gue140824III}, 
we can identify the Kohn Laplacian $\Box^{(q)}_{b,k}$ with
\begin{equation} \label{e-gue140824VII} 
\begin{split}
\Box^{(q)}_{s,k}&=\ddbar_{s,k}\ol{\pr}^*_{s,k}+\ol{\pr}^*_{s,k}\ddbar_{s,k} \\
       &=\sum^{n-1}_{j=1}(Z^*_j+k\ol Z_j(\phi))(Z_j+kZ_j(\phi))\\
       &+\sum^{n-1}_{j,t=1}e_j\wedge e^{\wedge,*}_t\circ[Z_j+kZ_j(\phi), Z^*_t+k\ol Z_t(\phi)]  \\
&+\varepsilon(Z+kZ(\phi))+\varepsilon(Z^*+k\ol Z(\phi))+f,
\end{split}
\end{equation}
where 
$\varepsilon(Z+kZ(\phi))$ denotes remainder terms of the form $\sum a_j(Z_j+k Z_j(\phi))$ 
with $a_j$ smooth, matrix-valued and independent of $k$, for all $j$, 
and similarly for $\varepsilon(Z^*+k\ol Z(\phi))$ and $f$ is a smooth function independent of $k$. 
\end{prop}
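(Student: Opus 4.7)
The plan is a direct local computation using \eqref{e-gue140824IV} together with a basic exterior-algebra identity; conceptually it amounts to expanding $\ddbar_{s,k}\ol{\pr}^*_{s,k}+\ol{\pr}^*_{s,k}\ddbar_{s,k}$ and reorganizing the result according to the order of the differential factors.

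First I would write down local expressions. In the orthonormal frame $\{e_j\}_{j=1}^{n-1}$ of $T^{*0,1}X$ with dual frame $\{Z_j\}_{j=1}^{n-1}$ of $T^{0,1}X$, the standard formulas for $\ddbar_b$ and its formal adjoint read
\[
\ddbar_b=\sum_{j=1}^{n-1}e_j\wedge Z_j+R_0,\qquad \ol{\pr}^*_b=\sum_{j=1}^{n-1}e_j^{\wedge,*}Z_j^*+R_0^*,
\]
where $R_0$ and $R_0^*$ are zero-order endomorphism-valued operators independent of $k$, encoding the covariant derivatives of the frame. Since $(\ddbar_b\phi)\wedge=\sum_j Z_j(\phi)\,e_j\wedge$ and $(\ddbar_b\phi)^{\wedge,*}=\sum_j\ol Z_j(\phi)\,e_j^{\wedge,*}$, substituting into \eqref{e-gue140824IV} gives
\[
\ddbar_{s,k}=\sum_{j=1}^{n-1}e_j\wedge(Z_j+kZ_j(\phi))+R_0,\qquad \ol{\pr}^*_{s,k}=\sum_{j=1}^{n-1}e_j^{\wedge,*}(Z_j^*+k\ol Z_j(\phi))+R_0^*.
\]

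Next I would expand the two products in \eqref{e-gue140824VI}. For $\ol{\pr}^*_{s,k}\ddbar_{s,k}$, the principal contribution is $\sum_{j,t}e_t^{\wedge,*}e_j\wedge(Z_t^*+k\ol Z_t(\phi))(Z_j+kZ_j(\phi))$, obtained after commuting each first-order factor $(Z_t^*+k\ol Z_t(\phi))$ past the pointwise operator $e_j\wedge$. The cost of each such commutation is $[Z_t^*,e_j\wedge]$, a $k$-independent zero-order endomorphism, which when composed with $(Z_j+kZ_j(\phi))$ produces an $\varepsilon(Z+kZ(\phi))$ remainder. The cross terms involving $R_0$ and $R_0^*$ produce, after the same kind of commutation, precisely $\varepsilon(Z+kZ(\phi))$, $\varepsilon(Z^*+k\ol Z(\phi))$, and $f$ terms. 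An analogous expansion of $\ddbar_{s,k}\ol{\pr}^*_{s,k}$ yields $\sum_{j,t}e_j\wedge e_t^{\wedge,*}(Z_j+kZ_j(\phi))(Z_t^*+k\ol Z_t(\phi))$ plus the same kinds of remainders; rewriting
\[
(Z_j+kZ_j(\phi))(Z_t^*+k\ol Z_t(\phi))=(Z_t^*+k\ol Z_t(\phi))(Z_j+kZ_j(\phi))+[Z_j+kZ_j(\phi),\,Z_t^*+k\ol Z_t(\phi)]
\]
isolates the second sum in \eqref{e-gue140824VII}.

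Finally, adding the two expressions and using the exterior-algebra anticommutation $e_j\wedge e_t^{\wedge,*}+e_t^{\wedge,*}e_j\wedge=\delta_{jt}$ on $T^{*0,\bullet}X$, the combined principal contribution collapses to $\sum_{j=1}^{n-1}(Z_j^*+k\ol Z_j(\phi))(Z_j+kZ_j(\phi))$, which is the first term of \eqref{e-gue140824VII}; the commutator contribution remains as $\sum_{j,t}e_j\wedge e_t^{\wedge,*}\circ[Z_j+kZ_j(\phi),Z_t^*+k\ol Z_t(\phi)]$, the second term; and all remaining pieces assemble into $\varepsilon(Z+kZ(\phi))+\varepsilon(Z^*+k\ol Z(\phi))+f$. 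The main obstacle is not conceptual but bookkeeping: one must verify systematically that every zero-order term arising from the commutators $[Z_t^*,e_j\wedge]$, from $R_0$ and $R_0^*$, and from the various reorderings, is $k$-independent and of the right operator type to fit into one of the three allowed remainder categories, so that the $k$-dependence remains confined to the factors $(Z_j+kZ_j(\phi))$ and $(Z_t^*+k\ol Z_t(\phi))$ themselves.
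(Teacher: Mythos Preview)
Your proposal is correct and is precisely the standard direct computation one would carry out; the paper itself does not supply a proof of this proposition but simply cites \cite[Proposition~3.1]{Hsiao14}, where the same argument (expand $\ddbar_{s,k}$ and $\ol\pr^*_{s,k}$ in the local frame, commute the first-order factors past the wedge operators, and use $e_j\wedge e_t^{\wedge,*}+e_t^{\wedge,*}e_j\wedge=\delta_{jt}$) is given. Your identification of the bookkeeping as the only real issue is accurate.
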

Note that  the bracket in \eqref{e-gue140824VII} is the commutator of 
$Z_j + kZ_j(\phi)$ and $Z^*_t+k\ol Z_t(\phi)$, 
$Z_j + kZ_j(\phi)$($Z^*_t+k\ol Z_t(\phi)$) is a vector field plus a function.

Until further notice, we work with some real local coordinates $x=(x_1,\ldots,x_{2n-1})$ 
defined on $D$. Let $\xi=(\xi_1,\ldots,\xi_{2n-1})$ denote the dual variables of $x$. 
Then $(x, \xi)$ are local coordinates of the cotangent bundle $T^*D$.
Let $q_j(x, \xi)$ be the semi-classical principal symbol
of $Z_j+kZ_j(\phi)$, $j=1,\ldots,n-1$. If $r_j(x,\xi)$ denotes the principal symbol of $Z_j$, 
then $q_j(x,\xi)=r_j(x,\xi)+Z_j(\phi)$. The semi-classical principal symbol of $\Box^{(q)}_{s,k}$ is given by 
\begin{equation} \label{e-gue140824VIII}
p_0=\sum^{n-1}_{j=1}\ol q_jq_j.
\end{equation}
The characteristic manifold $\Sigma$ of $\Box^{(q)}_{s,k}$ is 
\begin{equation} \label{e-gue140824a}
\begin{split}
\Sigma&=\set{(x, \xi)\in T^*D;\,  p_0(x, \xi)=0}\\
&=\set{(x, \xi)\in T^*D;\, q_1(x, \xi)=\ldots=q_{n-1}(x,\xi)=\ol q_1(x, \xi)=\ldots=\ol q_{n-1}(x, \xi)=0}.
\end{split}
\end{equation}
From \eqref{e-gue140824a}, we see that $p_0$ vanishes to second order at $\Sigma$. 

\begin{prop} \label{p-gue140824I} 
We have 
\begin{equation} \label{e-gue140824aI} 
\Sigma=\set{(x, \xi)\in T^*D;\, \xi=\lambda\omega_0(x)-2{\rm Im\,}\ddbar_b\phi(x),\lambda\in\Real}.
\end{equation}
\end{prop}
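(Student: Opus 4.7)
The plan is to reduce the characteristic equations $q_j = \ol q_j = 0$ for $j=1,\ldots,n-1$ to a piece of linear algebra on $\Complex T^*X$. Since $\{Z_1,\ldots,Z_{n-1}\}$ is a local frame for $T^{0,1}X$ and the semi-classical principal symbol of a vector field $V$ is $i\langle V,\xi\rangle$, I would first rewrite
\[
q_j(x,\xi) = i\langle Z_j,\xi\rangle + Z_j(\phi) = \langle Z_j,\, i\xi + d\phi\rangle,\qquad j=1,\ldots,n-1,
\]
using $Z_j(\phi) = \langle Z_j, d\phi\rangle$. Hence the equations $q_1(x,\xi) = \cdots = q_{n-1}(x,\xi) = 0$ express exactly the vanishing of the $T^{*0,1}X$-component of the complex covector $i\xi + d\phi$. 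Because $\xi$ and $d\phi$ are real, the conditions $\ol q_j(x,\xi) = 0$ are the complex conjugates of the $q_j = 0$ conditions, and so they are automatic.

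The second step is to exploit the orthogonal decomposition $\Complex T^*X = T^{*1,0}X \oplus T^{*0,1}X \oplus \Complex \omega_0$ of \eqref{e-suII}. I would write a real covector uniquely as $\xi = \alpha + \ol\alpha + \lambda\omega_0$ with $\alpha \in T^{*1,0}_xX$ and $\lambda \in \Real$, and decompose $d\phi$ in the same way. The $T^{*1,0}$- and $T^{*0,1}$-parts of $d\phi$ are by definition $\pr_b\phi$ and $\ddbar_b\phi$, while the $\omega_0$-coefficient is pinned down by evaluating on $T$ and using $\langle T,\omega_0\rangle = -1$ together with $T \perp T^{*1,0}X \oplus T^{*0,1}X$; this yields $d\phi = \pr_b\phi + \ddbar_b\phi - T(\phi)\omega_0$.

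The third step is a one-line projection onto $T^{*0,1}X$: the $(0,1)$-component of $i\xi + d\phi$ equals $i\ol\alpha + \ddbar_b\phi$, and setting this to zero forces $\ol\alpha = i\ddbar_b\phi$, hence $\alpha = -i\pr_b\phi$ via the identity $\ol{\ddbar_b\phi} = \pr_b\phi$ valid for real $\phi$. Substituting back and using $2\,{\rm Im\,}\ddbar_b\phi = i(\pr_b\phi - \ddbar_b\phi)$ gives $\xi = \lambda\omega_0 - 2\,{\rm Im\,}\ddbar_b\phi$, which is \eqref{e-gue140824aI}. The reverse inclusion is immediate: for such $\xi$, the covector $i\xi + d\phi$ is a scalar multiple of $\omega_0$ and so pairs to zero against every $Z_j$. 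The whole argument is purely algebraic; I expect the only mild obstacle to be the careful bookkeeping of the $\omega_0$-component of $d\phi$ together with the conjugation identities linking $\pr_b\phi$ and $\ddbar_b\phi$, after which the identification of $\Sigma$ drops out at once.
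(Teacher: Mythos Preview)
Your argument is correct: writing $q_j(x,\xi)=\langle Z_j,\,i\xi+d\phi\rangle$, identifying the vanishing of all $q_j$ with the vanishing of the $T^{*0,1}X$-component of $i\xi+d\phi$, and then reading off $\xi$ from the decomposition \eqref{e-suII} is exactly the right linear-algebra reduction, and your bookkeeping of the $\omega_0$-coefficient and the conjugation $\ol{\ddbar_b\phi}=\pr_b\phi$ is accurate. The paper does not supply its own proof here---it simply cites \cite[Proposition~3.2]{Hsiao14}---so your self-contained computation is a welcome addition rather than a deviation from any argument in the text.
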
 

We refer the reader to~\cite[Proposition 3.2]{Hs14} for the proof of Proposition~\ref{p-gue140824I}.

Let $\sigma=d\xi\wedge dx$ denote the canonical two form on $T^*D$. 
We are interested in whether $\sigma$ is non-degenerate at $\rho\in\Sigma$. 
We recall that $\sigma$ is non-degenerate at $\rho\in\Sigma$ if $\sigma(u,v)=0$ 
for all $v\in T_\rho\Sigma\otimes_\Real\Complex$, where $u\in T_\rho\Sigma\otimes_\Real\Complex$, then $u=0$. From now on, for any $f\in\cC^\infty(T^*D,\Complex)$, we write $H_f$ to denote the Hamilton field of $f$. That is, in local symplectic coordinates $(x, \xi)$, 
\[H_f=\sum^{2n-1}_{j=1}\Bigr(\frac{\pr f}{\pr\xi_j}\frac{\pr}{\pr x_j}-\frac{\pr f}{\pr x_j}\frac{\pr}{\pr\xi_j}\Bigr).\] 
For $f, g\in\cC^\infty(T^*D,\Complex)$, 
$\set{f, g}$ denotes the Poisson bracket of $f$ and $g$. We recall that
\[\set{f, g}=\sum^{2n-1}_{s=1}(\frac{\pr f}{\pr \xi_s}\frac{\pr g}{\pr x_s}-
\frac{\pr f}{\pr x_s}\frac{\pr g}{\pr\xi_s}). 
\]
First, we need the following. 
\begin{lem} \label{l-patI}
For $\rho=(p, \lambda_0\omega_0(p)-2{\rm Im\,}\ddbar_b\phi(p))\in\Sigma$, we have 
\begin{equation} \label{e-patIV} 
\sigma(H_{q_j}, H_{q_t})|_\rho=0,\ \ j, t=1,\ldots,n-1,
\end{equation} 
\begin{equation} \label{e-patV} 
\sigma(H_{\ol q_j}, H_{\ol q_t})|_\rho=0,\ \ j, t=1,\ldots,n-1,
\end{equation}
and 
\begin{equation} \label{e-patVI} 
\begin{split}
\sigma(H_{\ol q_j}, H_{q_t})|_\rho&=i\langle[\ol Z_j, Z_t](p), \ddbar_b\phi(p)-\pr_b\phi(p)\rangle \\
&-i(\ol Z_j Z_t+Z_t\ol Z_j)\phi(p),\ \ j, t=1,\ldots,n-1,
\end{split}
\end{equation} 
where $Z_j$ are as in \eqref{e-gue140824VII} and $q_j$ is the 
semi-classical principal symbol of $Z_j+kZ_j(\phi)$, $j=1,\ldots,n-1$.
\end{lem} 

\begin{proof} 
We write $\rho=(p, \xi_0)$. It is straightforward to see that 
\begin{equation} \label{s1-e33pat} 
\sigma(H_{q_j}, H_{q_t})|_\rho=\set{q_j, q_t}(\rho)=-\langle[Z_j, Z_t](p), \xi_0\rangle 
+i[Z_j,Z_t]\phi(p).
\end{equation} 
We have 
\begin{equation} \label{s1-e34pat}
\begin{split}
&\langle[Z_j, Z_t](p), \xi_0\rangle =\langle[Z_j, Z_t](p), \lambda_0\omega_0(p)-2{\rm Im\,}\ddbar_b\phi(p)\rangle \\
&=\lambda_0\langle[Z_j, Z_t](p), \omega_0(p)\rangle +i\langle[Z_j, Z_t](p), \ddbar_b\phi(p)-\pr_b\phi(p)\rangle .
\end{split}
\end{equation} 
Since $[Z_j, Z_t](p)\in T^{0,1}_pX$, we have 
\begin{equation} \label{s1-e35pat}
\langle[Z_j, Z_t](p), \omega_0(p)\rangle =0
\end{equation} 
and 
\begin{equation} \label{s1-e36pat} 
\langle[Z_j, Z_t](p), \pr_b\phi(p)\rangle =0.
\end{equation}
Thus, 
\begin{equation} \label{s1-e37pat}
\langle[Z_j, Z_t](p), \ddbar_b\phi(p)-\pr_b\phi(p)\rangle =\langle[Z_j, Z_t](p), \ddbar_b\phi(p)\rangle =[Z_j, Z_t]\phi(p). 
\end{equation} 
From \eqref{s1-e34pat}, \eqref{s1-e35pat} and \eqref{s1-e37pat}, we get 
\[\langle[Z_j, Z_t](p), \xi_0\rangle =i[Z_j, Z_t]\phi(p).\]
Combining this with \eqref{s1-e33pat}, we get \eqref{e-patIV}. The proof of \eqref{e-patV} is the same.

As in \eqref{s1-e33pat}, it is straightforward to see that
\begin{equation} \label{s1-e38pat} 
\sigma(H_{\ol q_j}, H_{q_t})|_\rho=\set{\ol q_j, q_t}(\rho)=\langle[\ol Z_j, Z_t](p), \xi_0\rangle 
-i(\ol Z_jZ_t+Z_t\ol Z_j)\phi(p),
\end{equation} 
where $j, t=1,\ldots,n-1$. We have 
\begin{equation} \label{s1-e39pat} 
\begin{split}
&\langle[\ol Z_j, Z_t](p), \xi_0\rangle =\langle[\ol Z_j, Z_t](p), \lambda_0\omega_0(p)-2{\rm Im\,}\ddbar_b\phi(p)\rangle \\
&=\lambda_0\langle[\ol Z_j, Z_t](p), \omega_0(p)\rangle +i\langle[\ol Z_j, Z_t](p), \ddbar_b\phi(p)-\pr_b\phi(p)\rangle .
\end{split}
\end{equation} 
Since $X$ is Levi-flat, $\lambda_0\langle[\ol Z_j, Z_t](p), \omega_0(p)\rangle =0$ and hence 
\begin{equation} \label{s1-e39patt} 
\langle[\ol Z_j, Z_t](p), \xi_0\rangle =i\langle[\ol Z_j, Z_t](p), \ddbar_b\phi(p)-\pr_b\phi(p)\rangle .
\end{equation} 
Combining \eqref{s1-e39patt} with \eqref{s1-e38pat}, \eqref{e-patVI} follows.
\end{proof}

We need the following. 

\begin{lem}[{\cite[Lemma 4.1]{HsM:12}}]\label{l-dhI}
For any $U, V\in T^{1, 0}_pX$, pick $\mU, \mV\in\cC^\infty(D,T^{1, 0}X)$ such that $\mU(p)=U$,
$\mV(p)=V$. Then,
\begin{equation} \label{e-dhI}
R^L_p(U, \ol V)=-\big\langle\big[\,\mU, \ol{\mV}\,\big](p), \ddbar_b\phi(p)-\pr_b\phi(p)\big\rangle
+\big(\mU\ol{\mV}+\ol{\mV}\mU\big)\phi(p).
\end{equation}
\end{lem}

Now, we can prove:


\begin{thm}\label{t-gue140824}
$\sigma$ is non-degenerate at every point of $\Sigma$\,. 
\end{thm}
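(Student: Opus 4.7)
The plan is to appeal to Theorem 3.5 of \cite{Hsiao14}, which for a general compact CR manifold (not necessarily Levi-flat) characterizes the non-degeneracy of $\sigma|_{\Sigma}$ at a point
\[
\rho_0=\bigl(x_0,\lambda_0\omega_0(x_0)-2\,{\rm Im\,}\ddbar_b\phi(x_0)\bigr)\in\Sigma
\]
in terms of the non-degeneracy of a Hermitian form on $T^{1,0}_{x_0}X$ built from $R^L_{x_0}$ together with the Levi form $\mathcal{L}_{x_0}$ of $X$. Under our Levi-flat hypothesis $\mathcal{L}\equiv 0$, so this relevant Hermitian form reduces to $R^L_{x_0}$ alone, which is positive definite (hence non-degenerate) by assumption. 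Applying the criterion of \cite{Hsiao14} pointwise on $\Sigma$ then yields the conclusion.

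To make the Levi-flat specialization transparent, one can also argue directly. First, I would parametrize $\Sigma$ by $(x,\lambda)\in D\times\Real$ via Proposition \ref{p-gue140824I} and pull back the canonical symplectic form to obtain
\[
\sigma|_{\Sigma}=d\lambda\wedge\omega_0+\lambda\,d\omega_0-2\,d({\rm Im\,}\ddbar_b\phi).
\]
At $\rho_0$, decompose $\Complex T_{\rho_0}\Sigma\simeq T^{1,0}_{x_0}X\oplus T^{0,1}_{x_0}X\oplus\Complex T(x_0)\oplus\Complex\pr_\lambda$. Using $\langle T,\omega_0\rangle=-1$ from \eqref{e-suI}, the term $d\lambda\wedge\omega_0$ yields a non-degenerate pairing between $\pr_\lambda$ and $T$. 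On the block $T^{1,0}_{x_0}X\otimes T^{0,1}_{x_0}X$, Levi-flatness forces $d\omega_0(Z,\ol W)=0$ for $Z,W\in T^{1,0}X$, so that the pairing there is entirely given by $-2\,d({\rm Im\,}\ddbar_b\phi)(Z,\ol W)$. A short calculation from Definition \ref{d-suIII-I} (using that $\phi$ is real, so $\pr_b\phi=\ol{\ddbar_b\phi}$ and hence $\ddbar_b\phi-\pr_b\phi=2i\,{\rm Im\,}\ddbar_b\phi$) shows this equals $iR^L_{x_0}(Z,W)$, which is non-degenerate by the positivity of $L$.

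The step I expect to be the main obstacle is controlling the cross pairings of $\sigma|_{\Sigma}$ between the $T$-direction and the holomorphic/antiholomorphic directions $T^{1,0}_{x_0}X\oplus T^{0,1}_{x_0}X$; these are governed by $d\omega_0(T,\cdot)$ and $d({\rm Im\,}\ddbar_b\phi)(T,\cdot)$ and need not vanish a priori. The remedy, as in \cite{Hsiao14}, is to perform a linear change of basis $Z_j\mapsto Z_j+c_j(x_0,\lambda_0)T$, with coefficients $c_j$ chosen so as to cancel these off-diagonal contributions while preserving the $(1,0)$--$(0,1)$ block. After this reduction $\sigma|_{\Sigma}$ acquires block-diagonal form, with one block being the non-degenerate pairing between $\pr_\lambda$ and $T$ and the other being the non-zero scalar multiple of $R^L_{x_0}$ identified above; non-degeneracy of $\sigma|_{\Sigma}$ at $\rho_0$ is then immediate. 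Since $\rho_0\in\Sigma$ is arbitrary, this proves the theorem.
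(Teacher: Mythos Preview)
Your proposal is correct and, in its first paragraph, takes essentially the same approach as the paper: the paper's proof is nothing more than the observation that $X$ is Levi-flat together with an invocation of Theorem~3.5 in~\cite{Hsiao14}. Your additional direct computation of $\sigma|_{\Sigma}$ via the parametrization $(x,\lambda)\mapsto(x,\lambda\omega_0(x)-2\,{\rm Im\,}\ddbar_b\phi(x))$ is a welcome supplement that the paper does not spell out, and your handling of the cross terms by a shear $Z_j\mapsto Z_j+c_jT$ is the standard reduction; this makes the argument self-contained rather than relying on the cited result.
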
 

\begin{proof}
Note that
\[\Sigma=\set{(x, \xi)\in T^*D;\, q_j(x, \xi)=\ol q_j(x, \xi)=0,\ \ j=1,\ldots,n-1}.\]
Let $\Complex T_\rho\Sigma$ and $\Complex T_\rho(T^*D)$ be the
complexifications of $T_\rho\Sigma$ and $T_\rho(T^*D)$ respectively. 
Let $T_\rho\Sigma^\bot$ be the orthogonal to
$\Complex T_\rho\Sigma$ in $\Complex T_\rho(T^*D)$ with
respect to the canonical two form $\sigma$. 
We notice that ${\rm dim}_\Complex T_\rho\Sigma^\bot=2n-2$. It is easy to check that 
\[\sigma(v,H_{q_j})|_\rho=\langle dq_j(\rho), v\rangle ,\ \ \sigma(v,H_{\ol q_j})|_\rho=\langle d\ol q_j(\rho), v\rangle ,\] 
$j=1,\ldots,n-1$, $v\in\Complex T_\rho(T^*D)$. Thus, if $v\in \Complex T_\rho\Sigma$, we get 
$\sigma(H_{q_j}, v)|_\rho=0$, $\sigma(H_{\ol q_j}, v)|_\rho=0$, $j=1,\ldots,n-1$. We conclude that
$H_{q_1},\ldots,H_{q_{n-1}},H_{\ol q_1},\ldots,H_{\ol q_{n-1}}$
is a basis for $T_\rho\Sigma^\bot$. 

Let $\nu\in \Complex T_\rho\Sigma\bigcap T_\rho\Sigma^\bot$. We write $\nu=\sum^{n-1}_{j=1}(\alpha_jH_{q_j}(\rho)+\beta_jH_{\ol q_j}(\rho))$. Since $\nu\in\Complex T_\rho\Sigma$, we have 
\[\sigma(\nu, H_{q_t})|_\rho=\sigma(\nu, H_{\ol q_t})|_\rho=0,\] 
$t=1,\ldots,n-1$. In view of \eqref{e-patIV}, \eqref{e-patV}, \eqref{e-patVI} and \eqref{e-dhI}, we see that 
\begin{equation} \label{s1-e45dh} 
\begin{split}
\sigma(\nu, H_{q_t})|_\rho&=\sum^{n-1}_{j=1}\beta_j\Bigr(-iR^L_p(\ol Z_j, Z_t)\Bigr)\\
&=-iR^L_p(Y,Z_t)=0, 
\end{split}
\end{equation} 
for all $t=1,\ldots,n-1$, where $Y=\sum^{n-1}_{j=1}\beta_j\ol Z_j(p)\in T^{1,0}_pX$. 
Since $R^L_p$ is non-degenerate, we get 
$Y=0$. Thus, $\beta_j=0$, $j=1,\ldots,n-1$. Similarly, we can repeat 
the process above to show that $\alpha_j=0$, $j=1,\ldots,n-1$. We conclude that 
$\Complex T_\rho\Sigma\bigcap T_\rho\Sigma^\bot=0$. Hence $\sigma$ is non-degenerate at 
$\rho$. The theorem follows.
\end{proof}

\section{Semi-classical Hodge decomposition for the localized Kohn Laplacian}
\label{s-gue140824I}

In this section, we will apply the method introduced in~\cite{Hs14} to establish 
semi-classical Hodge decomposition theorems for $\Box^{(0)}_{s,k}$,
based on the heat equation method of Menikoff-Sj\"ostrand \cite{MS78}.
We first add one extra variable to the local $(2n-1)$
coordinates on $X$ and introduce the operator $\Box^{(q)}_s$ acting 
in $2n$ variables and linked to the localized Kohn Laplacian $\Box^{(q)}_{s,k}$
by \eqref{e-dhmpIII}. We use the heat equation method \cite{MS78}, 
\cite[Proposition 6.5]{Hs08}, to construct a parametrix for $\Box^{(0)}_s$
in Theorem \ref{t-dcmimpII}. The corresponding Szeg\H{o} operator $S$ in
that Theorem (cf.\ \eqref{e:pm}) turns 
out to be a complex Fourier integral operator cf.\ Theorem \ref{t-dcgewI}
with phase function $\Phi$.
Returning to $\Box^{(q)}_{s,k}$ this yields the semiclassical Hodge decomposition
by Theorem \ref{t-gue140826a}, with Szeg\H{o} operators $\mathcal{S}_k$
having an expansion in Sobolev spaces cf.\ Theorem \ref{t-gue150130} given by a 
kernel with phase function $\psi$ given by the restriction of $\Phi$.
We then refine the result to show that composing with certain pseudodifferential
operators $\mathcal{A}_k$ we obtain an expansion of $\mathcal{S}_k\mathcal{A}_k$
in the $\cC^{\infty}$ topology and calculate its leading term (Theorems \ref{t-gue140826aI}
and \ref{t-gue140909}).
\subsection{The heat equation for the local operator $\Box^{(0)}_s$}
\label{s-gue140824II}

Let $\Omega$ be an open set in $\Real^N$ and 
let $f$, $g$ be positive continuous functions on $\Omega$. We write $f\asymp g$ 
if for every compact set $K\subset\Omega$ there is a 
constant $c_K>0$ such that $f\leq c_Kg$ and $g\leq c_Kf$ on $K$. 

Let $s$ be a local trivializing section of $L$ on an open subset $D\Subset X$ and 
$\abs{s}^2_{h}=e^{-2\phi}$. In this section, we work with some real local coordinates 
$x=(x_1,\ldots,x_{2n-1})$ defined on $D$. We write $\xi=(\xi_1,\ldots,\xi_{2n-1})$ or 
$\eta=(\eta_1,\ldots,\eta_{2n-1})$ to denote the dual coordinates of $x$. 
We consider the domain $\widehat D:=D\times\Real$. We write 
$\widehat x:=(x,x_{2n})=(x_1,x_2,\ldots,x_{2n-1},x_{2n})$ to denote the coordinates 
of $D\times\Real$, where $x_{2n}$ is the coordinate of $\Real$. 
We write $\widehat\xi:=(\xi,\xi_{2n})$ or $\widehat\eta:=(\eta,\eta_{2n})$ to denote 
the dual coordinates of $\widehat x$, where $\xi_{2n}$ and $\eta_{2n}$ 
denote the dual coordinate of $x_{2n}$.
 We shall use the following notations: 
$\langle x,\eta\rangle:=\sum\limits^{2n-1}_{j=1}x_j\eta_j$, 
$\langle x,\xi\rangle:=\sum\limits^{2n-1}_{j=1}x_j\xi_j$, 
$\langle\widehat x,\widehat\eta\rangle:=\sum\limits^{2n}_{j=1}x_j\eta_j$, 
$\langle\widehat x,\widehat\xi\rangle:=\sum\limits^{2n}_{j=1}x_j\xi_j$. 

Let $\Lambda^{0,q}(T^*\widehat D)$ be the bundle with fiber 
\[\Lambda^{0,q}_{\widehat x}(T^*\widehat D):=\set{u\in\Lambda^{0,q}(T^*X);\, 
\widehat x=(x,x_{2n})}\]
at $\widehat x\in\widehat D$. From now on, for every point $\widehat x=(x,x_{2n})\in\widehat D$, 
we identify $\Lambda^{0,q}_{\widehat x}(T^*\widehat D)$ with $\Lambda^{0,q}_x(T^*X)$. 
Let $\langle\,\cdot\,|\,\cdot\,\rangle$ be the Hermitian metric on 
$T^*\widehat D\otimes_\Real\Complex$ given by 
$\langle\,\widehat\xi\,|\,\widehat\eta\,\rangle=\langle\,\xi\,|\,\eta\,\rangle+\xi_{2n}\ol{\eta_{2n}}$, 
$(\widehat x,\widehat\xi), (\widehat x,\widehat\eta)\in 
T^*\widehat D\otimes_\Real\Complex$. Let $\Omega^{0,q}(\widehat D)$ 
denote the space of smooth sections of $\Lambda^{0,q}(T^*\widehat D)$ over $\widehat D$ and put 
$\Omega^{0,q}_0(\widehat D):=\Omega^{0,q}(\widehat D)\cap 
\mathscr E'(\widehat D,\Lambda^{0,q}(T^*\widehat D))$.
Using 
$ku(x)=e^{-ikx_{2n}}\left(-i\frac{\pr}{\pr x_{2n}}
\left(e^{ikx_{2n}}u\right)(x)\right)$, $u\in\Omega^{0,q}(D)$, 
we consider the following operators
\begin{equation}\label{s3-lkmidhI}
\begin{split}
&\ddbar_s:\Omega^{0,r}(\widehat D)\To\Omega^{0,r+1}(\widehat D),\ \ \ddbar_{s,k}u=
e^{-ikx_{2n}}\ddbar_s(ue^{ikx_{2n}}),\ \ u\in\Omega^{0,r}(D),\\
&\ol{\pr}^*_s:\Omega^{0,r+1}(\widehat D)\To\Omega^{0,r}(\widehat D),\ \ \ol{\pr}^*_{s,k}u=
e^{-ikx_{2n}}\ol{\pr}^*_s(ue^{ikx_{2n}}),\ \ u\in\Omega^{0,r+1}(D),
\end{split}
\end{equation}
where $r=0,1,\ldots,n-1$ and $\ddbar_{s,k}$, $\ol{\pr}^*_{s,k}$ are given by \eqref{e-gue140824III}.
 From \eqref{e-gue140824IV} 
 it is easy to see that 
\begin{equation}\label{e-dhmpI}
\begin{split}
&\ddbar_s=\sum^{n-1}_{j=1}\left(e_j\wedge\left(Z_j-iZ_j(\phi)\frac{\pr}{\pr x_{2n}}\right)+
(\ddbar_b e_j){\wedge} e^{\wedge,*}_j\right),\\
&\ol{\pr}^*_{s}=
\sum^{n-1}_{j=1}\left(e^{\wedge,*}_j
\left(Z^*_j-i\ol Z_j(\phi)\frac{\pr}{\pr x_{2n}}\right)+
e_j\wedge(\ddbar_b e_j)^{\wedge,*}\right),
\end{split}
\end{equation}
where $Z_1,\ldots,Z_{n-1}$, $Z^*_1,\ldots,Z^*_{n-1}$ and 
$e_1,\ldots,e_{n-1}$ are as in Proposition~\ref{p-gue140824}. Put 
\begin{equation}\label{e-dhmpII}
\Box^{(q)}_s:=\ddbar_s\ol{\pr}^*_s+
\ol{\pr}^*_s\ddbar_s:\Omega^{0,q}(\widehat D)\To\Omega^{0,q}(\widehat D).
\end{equation}
From \eqref{s3-lkmidhI}, we have 
\begin{equation}\label{e-dhmpIII}
\Box^{(q)}_{s,k}u=e^{-ikx_{2n}}\Box^{(q)}_s(ue^{ikx_{2n}}),\ \ 
\forall u\in\Omega^{0,q}(D),
\end{equation}
where $\Box^{(q)}_{s,k}$ is given by \eqref{e-gue140824III}. 
Let $u\in\Omega^{0,q}_0(\widehat D)$.
Note that
\[k\int e^{-ikx_{2n}}u(x)dx_{2n}=\int i\frac{\pr}{\pr x_{2n}}(e^{-ikx_{2n}})u(x)dx_{2n}=
\int e^{-ikx_{2n}}\Bigl(-i\frac{\pr u}{\pr x_{2n}}(x)\Bigr)dx_{2n}.\]
From this observation and the explicit formulas for $\ddbar_{s,k}$,
$\ol\pr^*_{s,k}$, $\ddbar_s$ and $\ol\pr^*_s$ (see \eqref{e-gue140824IV} and \eqref{e-dhmpI}), we conclude that
\begin{equation} \label{s3-e9bis}
\Box^{(q)}_{s,k}\int e^{-ikx_{2n}}u(x)dx_{2n}=\int e^{-ikx_{2n}}(\Box^{(q)}_su)(x)dx_{2n},
\ \ u\in\Omega^{0,q}_0(\widehat D).
\end{equation}
As in Proposition 4.1 in~\cite{Hs14}, we have: 
\begin{prop} \label{p-dhmpI} 
With the notations used before, we have
\begin{equation} \label{e-dhmpIV} 
\begin{split}
\Box^{(q)}_{s}&=\ddbar_{s}\ol{\pr}^*_{s}+\ol{\pr}^*_{s}\ddbar_{s} \\
       &=\sum^{n-1}_{j=1}\left(Z^*_j-i\ol Z_j(\phi)\frac{\pr}{\pr x_{2n}}\right)
       \left(Z_j-iZ_j(\phi)\frac{\pr}{\pr x_{2n}}\right)\\
       &+\sum^{n-1}_{j,t=1}e_j\wedge e^{\wedge,*}_t\left[Z_j-iZ_j(\phi)\frac{\pr}{\pr x_{2n}}, 
       Z^*_t-i\ol Z_t(\phi)\frac{\pr}{\pr x_{2n}}\right]\\
&+\varepsilon\left(Z-iZ(\phi)\frac{\pr}{\pr x_{2n}}\right)+
\varepsilon\left(Z^*-i\ol Z(\phi)\frac{\pr}{\pr x_{2n}}\right)+\mbox{zero order terms},
\end{split}
\end{equation}
where $\varepsilon(Z-iZ(\phi)\frac{\pr}{\pr x_{2n}})$ denotes remainder terms of the form $\sum a_j(Z_j-iZ_j(\phi)\frac{\pr}{\pr x_{2n}})$ with $a_j$ smooth, matrix-valued, for all $j$, and similarly for $\varepsilon(Z^*-i\ol Z(\phi)\frac{\pr}{\pr x_{2n}})$.
\end{prop}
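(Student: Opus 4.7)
The plan is to reduce Proposition 4.1 directly to the formula for $\Box^{(q)}_{s,k}$ given in Proposition 3.1, via the intertwining relation \eqref{e-dhmpIII}. The key observation is that, in virtue of the identity $-i\tfrac{\partial}{\partial x_{2n}}(u(x)e^{ikx_{2n}}) = k\,u(x)e^{ikx_{2n}}$ for $u\in\Omega^{0,q}(D)$, the operators $\ddbar_s$ and $\ol\pr^*_s$ defined by \eqref{e-dhmpI} are obtained from $\ddbar_{s,k}$ and $\ol\pr^*_{s,k}$ (cf.\ \eqref{e-gue140824IV}) by formally substituting the scalar $k$ with the differential operator $-i\partial/\partial x_{2n}$. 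Since $-i\partial/\partial x_{2n}$ commutes with every function of $x$ alone and with each of the operators $Z_j$, $Z^*_j$, $e_j\wedge$, and $e_j^{\wedge,*}$, compositions and commutators of such first-order operators are preserved under this substitution.

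Applying the substitution rule term by term to the formula of Proposition 3.1 then yields exactly the right-hand side of \eqref{e-dhmpIV}: the quadratic sum $\sum_j(Z^*_j+k\ol Z_j(\phi))(Z_j+kZ_j(\phi))$ becomes $\sum_j(Z^*_j-i\ol Z_j(\phi)\partial_{x_{2n}})(Z_j-iZ_j(\phi)\partial_{x_{2n}})$; each commutator $[Z_j+kZ_j(\phi),Z^*_t+k\ol Z_t(\phi)]$ becomes the corresponding commutator with $k$ replaced by $-i\partial_{x_{2n}}$; and the $\varepsilon(\cdot)$-remainders and zero-order pieces transform analogously. To turn this formal manipulation into a proof, I would test both sides on sections of the form $u(x)e^{ikx_{2n}}$, $u\in\Omega^{0,q}(D)$, $k\in\mathbb R$: by \eqref{e-dhmpIII} the left-hand side evaluates to $e^{ikx_{2n}}\Box^{(q)}_{s,k}u$, which Proposition 3.1 computes in closed form, while the right-hand side of \eqref{e-dhmpIV} applied to the same test section produces the same expression by the substitution rule. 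Since the span of such sections is dense in $\Omega^{0,q}(\widehat D)$ in the $\cC^\infty$ topology (via Fourier expansion in $x_{2n}$), the two differential operators agree globally on $\widehat D$.

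The only delicate bookkeeping concerns the correction terms $(\ddbar_b e_j)\wedge e^{\wedge,*}_j$ and $e_j\wedge(\ddbar_b e_j)^{\wedge,*}$ appearing in \eqref{e-dhmpI}: one must verify that they contribute only to the $\varepsilon(\cdot)$-remainders and zero-order terms in \eqref{e-dhmpIV}, never to the principal quadratic sum or to the commutator sum. This is routine because these correction terms involve no $\partial/\partial x_{2n}$ and factor through at most one $Z$-type vector field, hence fall automatically into the lower-order buckets of the formula. The computation then mirrors line by line the proof of the corresponding identity in \cite{Hsiao14}, so no further conceptual input is required.
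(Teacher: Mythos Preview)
Your approach is correct and is exactly the intended one: the paper does not prove this proposition independently but simply records it with the remark ``As in Proposition~4.1 in~\cite{Hsiao14}'', and the argument there is precisely the substitution $k\rightsquigarrow -i\partial_{x_{2n}}$ in the formula of Proposition~\ref{p-gue140824}, justified by the commutation of $\partial_{x_{2n}}$ with all the $x$-dependent data.

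One small point: the density claim you invoke is a little delicate as stated, but it is also unnecessary. Both sides of \eqref{e-dhmpIV} are differential operators on $\widehat D$ whose coefficients are independent of $x_{2n}$; hence each is a polynomial of degree $\le 2$ in $\partial_{x_{2n}}$ with coefficients that are differential operators in the $x$-variables. Conjugating by $e^{ikx_{2n}}$ evaluates this polynomial at $k$, so agreement on all test sections $u(x)e^{ikx_{2n}}$ for every $k\in\Real$ (which you have via \eqref{e-dhmpIII} and Proposition~\ref{p-gue140824}) already forces equality of all coefficients, without any appeal to density in $\cC^\infty(\widehat D)$. Alternatively, you can bypass the test-section argument altogether and just expand $\ddbar_s\ol\pr^*_s+\ol\pr^*_s\ddbar_s$ directly from \eqref{e-dhmpI}; the algebra is identical to that behind Proposition~\ref{p-gue140824}.
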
  
In this paper, we will only consider $q=0$. Consider the following problem for the
heat equation
\begin{equation} \label{e-dhmpV}
\left\{ \begin{array}{ll}
(\pr_t+\Box^{(0)}_s)u(t,\widehat x)=0  & \text{ in }\Real_+\times\widehat D,  \\
u(0,\widehat x)=v(\widehat x). \end{array}\right.
\end{equation}
\begin{defn} \label{d-dhikbmiI}
We say that $a(t,\widehat x,\widehat \eta)\in \cC^\infty(\ol\Real_+\times T^*\widehat D)$
is quasi-homogeneous of
degree $j$ if $a(t,\widehat x,\lambda\widehat\eta)=\lambda^ja(\lambda t,\widehat x,\widehat\eta)$ for all $\lambda>0$, $\abs{\widehat\eta}\geq1$. 
%
We say that $b(\widehat x,\widehat \eta)\in \cC^\infty(T^*\widehat D)$
is positively homogeneous of degree $j$ if $b(\widehat x,\lambda\widehat\eta)=\lambda^jb(\widehat x,\widehat\eta)$ for all $\lambda>0$, $\abs{\widehat\eta}\geq1$.
\end{defn} 

We look for an approximate solution of \eqref{e-dhmpV} of the form $u(t,\widehat x)=A(t)v(\widehat x)$,
\begin{equation} \label{e-dhmpVI}
A(t)v(\widehat x)=\frac{1}{(2\pi)^{2n}}\iint\!\! e^{i(\Psi(t,\widehat x,\widehat\eta)-\langle\widehat y,\widehat \eta\rangle)}a(t,\widehat x,\widehat\eta)v(\widehat y)d\widehat yd\widehat\eta
\end{equation}
where formally $a(t,\widehat x,\widehat \eta)\sim\sum\limits^\infty_{j=0}a_j(t,\widehat x,\widehat \eta)$, 
$a_j(t, \widehat x, \widehat\eta)\in \cC^\infty(\ol\Real_+\times T^*\widehat D)$, $a_j(t,\widehat x,\widehat \eta)$ is a quasi-homogeneous function of degree $-j$.
The phase $\Psi(t,\widehat x,\widehat\eta)$ should solve the eikonal equation 
\begin{equation} \label{e-dhmpVIII}
\begin{split}
 \frac{\displaystyle\pr\Psi}{\displaystyle\pr t}-i\widehat p_0(\widehat x,\Psi'_{\widehat x})&=
   O(\abs{{\rm Im\,}\Psi}^N),\forall N\geq0,   \\
 \Psi|_{t=0}&=\langle\widehat x,\widehat\eta\rangle 
 \end{split}
\end{equation}
with ${\rm Im\,}\Psi\geq0$, where $\widehat p_0$ denotes the principal symbol 
of $\Box^{(0)}_s$. From \eqref{e-dhmpIV}, we have 
\begin{equation}\label{e-dhmpIX}
\widehat p_0=\sum^{n-1}_{j=1}\ol{\widehat q}_j\widehat q_j,
\end{equation}
where $\widehat q_j$ is the principal symbol of $Z_j-iZ_j(\phi)\frac{\pr}{\pr x_{2n}}$, $j=1,\ldots,n-1$. The characteristic 
manifold $\widehat\Sigma$ of $\Box^{(0)}_s$ is given by 
\begin{equation}\label{e-dhmpX}
\widehat\Sigma=
\set{(\widehat x,\widehat\xi)\in T^*\widehat D;\, \widehat q_1(\widehat x,\widehat\xi)
=\ldots=\widehat q_{n-1}(\widehat x,\widehat\xi)
=\ol{\widehat q}_1(\widehat x,\widehat\xi)=\ldots
=\ol{\widehat q}_{n-1}(\widehat x,\widehat\xi)=0}.
\end{equation}
From \eqref{e-dhmpX}, we see that $\widehat p_0$ vanishes to second order at 
$\widehat\Sigma$. Let $\widehat\sigma$ denote the canonical two form on $T^*\widehat D$. 
As in Proposition~\ref{p-gue140824I} and Theorem~\ref{t-gue140824}, we have 

\begin{thm} \label{t-gue140825} 
With the notations used above, we have
\begin{equation} \label{e-dhmpXI} 
\widehat\Sigma=\set{(\widehat x, \widehat\xi)\in T^*\widehat D;\, \widehat\xi=
(\lambda\omega_0(x)-2{\rm Im\,}\ddbar_b\phi(x)\xi_{2n},\xi_{2n}),\lambda\in\Real}.
\end{equation}
Put 
\begin{equation}\label{e-gue140825}
\begin{split}
&\widehat\Sigma_+=\set{(\widehat x, \widehat\xi)\in T^*\widehat D;\, \widehat\xi=
(\lambda\omega_0(x)-2{\rm Im\,}\ddbar_b\phi(x)\xi_{2n},\xi_{2n}),\lambda\in\Real, \xi_{2n}>0},\\
&\widehat\Sigma_-=\set{(\widehat x, \widehat\xi)\in T^*\widehat D;\, \widehat\xi=
(\lambda\omega_0(x)-2{\rm Im\,}\ddbar_b\phi(x)\xi_{2n},\xi_{2n}),\lambda\in\Real, \xi_{2n}<0}.
\end{split}
\end{equation}
Then, $\widehat\sigma$ is non-degenerate at every point of $\widehat\Sigma_+\cup\widehat\Sigma_-$.
\end{thm}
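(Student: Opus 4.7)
My plan is to treat the two assertions separately: the characterization of $\widehat\Sigma$ reduces algebraically to Proposition~\ref{p-gue140824I}, and the non-degeneracy of $\widehat\sigma$ on $\widehat\Sigma_\pm$ is a one-step symplectic ``suspension'' of Theorem~\ref{t-gue140824}.

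For the first assertion, I compute the principal symbols appearing in \eqref{e-dhmpIV}. Since the principal symbol of $\pr/\pr x_{2n}$ is $i\xi_{2n}$ and the principal symbol of $Z_j$ is the $\xi$-linear function $r_j(x,\xi)$ used in Section~\ref{s-gue140824}, I obtain $\widehat q_j(\widehat x,\widehat\xi)=r_j(x,\xi)+Z_j(\phi)(x)\,\xi_{2n}$, which is independent of $x_{2n}$. For $\xi_{2n}\neq 0$, the $\xi$-linearity of $r_j$ lets me factor $\xi_{2n}$ out of the system $\widehat q_j=\ol{\widehat q}_j=0$, reducing it to $r_j(x,\xi/\xi_{2n})+Z_j(\phi)=0$ and its conjugate; Proposition~\ref{p-gue140824I} then forces $\xi/\xi_{2n}=\mu\omega_0(x)-2\,{\rm Im\,}\ddbar_b\phi(x)$ for some $\mu\in\Real$, and the substitution $\lambda=\mu\xi_{2n}$ yields \eqref{e-dhmpXI}. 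When $\xi_{2n}=0$, the system degenerates to $r_j(x,\xi)=\ol r_j(x,\xi)=0$, so $\xi$ annihilates $T^{0,1}X\oplus T^{1,0}X$ and must lie in $\Real\omega_0(x)$, again of the prescribed form.

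For the non-degeneracy on $\widehat\Sigma_+$ (the argument on $\widehat\Sigma_-$ is identical), I reparameterize $\widehat\Sigma_+$ by $(x,\mu,x_{2n},\xi_{2n})$ with $\xi_{2n}>0$ and $\lambda=\mu\xi_{2n}$, so the embedding becomes $\xi=\xi_{2n}(\mu\omega_0(x)-I(x))$ with $I:=2\,{\rm Im\,}\ddbar_b\phi$. Writing $\widehat\sigma=\sigma+d\xi_{2n}\wedge dx_{2n}$ where $\sigma=d\xi\wedge dx$ is the canonical form on $T^*D$, a direct expansion of the pullback of $\sum_l d\xi_l\wedge dx_l$ produces the identity
\[
\widehat\sigma\big|_{\widehat\Sigma_+}=\xi_{2n}\,\sigma\big|_\Sigma+d\xi_{2n}\wedge\bigl(\mu\omega_0-I+dx_{2n}\bigr),
\]
in which $\sigma|_\Sigma=d\mu\wedge\omega_0+\mu\,d\omega_0-dI$ is precisely the restriction whose non-degeneracy is Theorem~\ref{t-gue140824}. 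To read off the radical, I decompose $v\in T_{\widehat\rho}\widehat\Sigma_+$ as $v=v_\Sigma+a\,\pr_{x_{2n}}+b\,\pr_{\xi_{2n}}$, contract the displayed formula against $v$, and extract coefficients in a triangular fashion: the $dx_{2n}$-component forces $b=0$, the $d\xi_{2n}$-component then forces $a=-\iota_{v_\Sigma}(\mu\omega_0-I)$, and the remaining $(x,\mu)$-components yield $\xi_{2n}\,\iota_{v_\Sigma}\sigma|_\Sigma=0$. Since $\xi_{2n}\neq 0$, Theorem~\ref{t-gue140824} forces $v_\Sigma=0$ and hence $a=b=0$.

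The only real difficulty I anticipate is bookkeeping in the pullback to $\widehat\Sigma_+$---keeping the reparameterization $\lambda=\mu\xi_{2n}$ consistent and cleanly splitting the resulting 1-forms into their $d\xi_{2n}$, $dx_{2n}$, and $\Sigma$-tangent components. Conceptually Theorem~\ref{t-gue140824} does all the real work; adjoining the pair $(x_{2n},\xi_{2n})$ merely suspends the symplectic form by a transverse hyperbolic plane without creating any new radical directions.
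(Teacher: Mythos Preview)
Your proof is correct. The paper does not give an explicit proof here; it simply states the theorem after the phrase ``As Proposition~\ref{p-gue140824I} and Theorem~\ref{t-gue140824}, we have'', meaning the argument is the evident extension of those two results to the suspended setting $\widehat D=D\times\Real$. Your write-up makes this reduction explicit: for \eqref{e-dhmpXI} you scale by $\xi_{2n}$ to land back in the hypotheses of Proposition~\ref{p-gue140824I}, and for the non-degeneracy you pull back $\widehat\sigma$ in the coordinates $(x,\mu,x_{2n},\xi_{2n})$ with $\lambda=\mu\xi_{2n}$ to display $\widehat\sigma|_{\widehat\Sigma_+}=\xi_{2n}\,\sigma|_\Sigma+d\xi_{2n}\wedge(\mu\omega_0-I+dx_{2n})$ and then peel off the radical triangularly, invoking Theorem~\ref{t-gue140824} at the last step. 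This is exactly the intended ``suspension'' argument and the bookkeeping you carried out is accurate; there is nothing to add.
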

Consider the conic open set of $T^*\widehat D$ defined by
\begin{equation}\label{e-dhmpXIII}
U=\set{(\widehat x,\widehat\xi)\in T^*\widehat D;\, \widehat\xi=(\xi,\xi_{2n}), \xi_{2n}>0}.
\end{equation}
Until further notice, we work in $U$. 
Since $\widehat\sigma$ is non-degenerate at each point of $U\cap \widehat\Sigma=\widehat\Sigma_+$, 
\eqref{e-dhmpVIII} can be solved with ${\rm Im\,}\Psi\geq0$ on $U$. More precisely, we have the following. 
\begin{thm} \label{t-dhlkmimI}
There exists $\Psi(t,\widehat x,\widehat\eta)\in \cC^\infty(\ol\Real_+\times U)$ such
that $\Psi(t,\widehat x,\widehat\eta)$ is quasi-homogeneous of degree $1$ and ${\rm Im\,}\Psi\geq 0$ 
and such that \eqref{e-dhmpVIII} holds where the error term is uniform on every set of the form 
$[0,T]\times K$ with $T>0$ and $K\subset U$ compact.
Furthermore, $\Psi$ is unique up to a term which is
$O(\abs{{\rm Im\,}\Psi}^N)$ locally uniformly for every $N$ and
\begin{equation}\label{e-dhlkmimIII}
\begin{split}
\mbox{$\Psi(t,\widehat x,\widehat \eta)=\langle\widehat x,\widehat \eta\rangle$ on $\widehat\Sigma_+$},\\
\mbox{$d_{\widehat x,\widehat\eta}(\Psi-\langle\widehat x,\widehat\eta\rangle)=0$ on $\widehat\Sigma_+$}.
\end{split}
\end{equation}
Moreover, we have
\begin{equation} \label{e-dhlkmimIIIa}
{\rm Im\,}\Psi(t, \widehat x,\widehat\eta)\asymp\Big(\abs{\widehat\eta}
\frac{t\abs{\widehat\eta}}{1+t\abs{\eta}}\Big)\Big(\dist 
\big((\widehat x, \frac{\widehat\eta}{\abs{\widehat\eta}}),\widehat\Sigma_+\big)\Big)^2,\ \ t\geq0,\ \ (\widehat x,\widehat\eta)\in U.
\end{equation}
Furthermore, we can take $\Psi(t,\widehat x,\widehat\eta)$ so that 
\begin{equation}\label{e-dhlkmimVI}
\Psi(t,\widehat x,\widehat\eta)=\Psi(t,(x,0),\widehat\eta)+x_{2n}\eta_{2n}.
\end{equation}
\end{thm}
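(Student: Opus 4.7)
The plan is to construct $\Psi$ by the complex Hamilton--Jacobi method of Melin--Sj\"ostrand, in the form adapted to symbols $\widehat p_0=\sum\ol{\widehat q}_j\widehat q_j$ by Menikoff--Sj\"ostrand and used in~\cite{Hsiao14}. Three geometric facts make the method apply: $\widehat p_0$ vanishes to second order precisely on $\widehat\Sigma$; by Theorem~\ref{t-gue140825} the canonical two--form $\widehat\sigma$ is non-degenerate on $\widehat\Sigma_+$, so that $\widehat\Sigma_+$ is a symplectic submanifold of $U$; and the positivity of $R^L$ is the statement that the Hessian of $\widehat p_0$ transverse to $\widehat\Sigma$ is positive definite on the real normal bundle. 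In this setting the complex eikonal equation admits a solution with non-negative imaginary part, vanishing precisely to second order on $\widehat\Sigma_+$.

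Concretely, the first step is to extend $\widehat q_j,\ol{\widehat q}_j$ almost-analytically in $\widehat\eta$ off the real domain, form the complex Hamilton vector field $H_{-i\widehat p_0}$, and flow the initial real Lagrangian associated with the phase $\langle\widehat x,\widehat\eta\rangle$. Since $H_{-i\widehat p_0}$ vanishes on $\widehat\Sigma_+$, that submanifold is preserved by the flow, and the spectral structure of the fundamental matrix of $-i\widehat p_0$ (coming from the positivity above) ensures, by a standard stable-manifold argument, that the evolved Lagrangian $\Lambda_t$ stays positive for all $t\geq 0$. Taking $\Psi(t,\widehat x,\widehat\eta)$ to be a generating function of $\Lambda_t$ gives~\eqref{e-dhmpVIII} modulo $O(\abs{{\rm Im\,}\Psi}^N)$ for every $N$, and the initial condition $\Psi|_{t=0}=\langle\widehat x,\widehat\eta\rangle$ together with flow-invariance of $\widehat\Sigma_+$ forces~\eqref{e-dhlkmimIII}.

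For the lower bound~\eqref{e-dhlkmimIIIa} I would work in symplectic normal coordinates around a point of $\widehat\Sigma_+$, where $-i\widehat p_0$ becomes, to leading order, a positive-definite quadratic model whose eigenvalues on the normal bundle come from those of $R^L$. The eikonal equation for this model admits an explicit Mehler-type solution whose imaginary part equals a factor $\asymp t\abs{\widehat\eta}/(1+t\abs{\widehat\eta})$ times a positive-definite quadratic form in the transverse coordinates; estimating the distance to $\widehat\Sigma_+$ in these coordinates gives~\eqref{e-dhlkmimIIIa}. Higher-order corrections to the model are controlled by the same quadratic form, being perturbations of a positive elliptic quadratic on the normal bundle.

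Finally, \eqref{e-dhlkmimVI} is obtained from the $x_{2n}$--translation invariance of $\Box^{(0)}_s$: Proposition~\ref{p-dhmpI} shows that every coefficient of $\Box^{(0)}_s$ depends only on $x=(x_1,\ldots,x_{2n-1})$, so if $\Psi(t,\widehat x,\widehat\eta)$ solves the eikonal equation then so does $\Psi(t,(x,x_{2n}+c),\widehat\eta)-c\eta_{2n}$ for any $c\in\Real$, with the same initial data. By the uniqueness modulo $O(\abs{{\rm Im\,}\Psi}^N)$ one can then choose $\Psi$ so that $\pr\Psi/\pr x_{2n}\equiv\eta_{2n}$, which integrates to the stated splitting. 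The main obstacle in the scheme is the \emph{global in $t$} maintenance of ${\rm Im\,}\Psi\geq 0$: local solvability is routine, but obtaining~\eqref{e-dhlkmimIIIa} uniformly for all $t\geq 0$ requires controlling the positive Lagrangian $\Lambda_t$ under the complex Hamilton flow for long times, which is precisely where the \emph{positivity} of $R^L$ (not merely the non-degeneracy given by the Levi-flat condition) enters decisively.
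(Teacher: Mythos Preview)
Your outline is correct and matches the approach the paper itself points to: the paper does not give an independent proof of this theorem but refers to Menikoff--Sj\"ostrand~\cite{MS78}, \cite{Hsiao08}, and \cite[Section~4.1]{Hsiao14}, and your sketch of the complex Hamilton--Jacobi construction via positive Lagrangians, the model computation for~\eqref{e-dhlkmimIIIa}, and the $x_{2n}$--translation argument for~\eqref{e-dhlkmimVI} is precisely the content of those references. One small clarification on your final remark: the non-degeneracy of $\widehat\sigma$ on $\widehat\Sigma_+$ stated in Theorem~\ref{t-gue140825} already uses that $R^L$ is non-degenerate (in the Levi-flat case the Levi form contributes nothing, so the transverse Hessian of $\widehat p_0$ is governed entirely by $\xi_{2n}R^L_x$); the additional role of \emph{positivity} is, as you say, to make the eigenvalues of the fundamental matrix lie on the correct side so that the positive Lagrangian persists for all $t\geq0$ and converges exponentially as in Theorem~\ref{t-dhlkmimII}.
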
 

\begin{thm} \label{t-dhlkmimII}
There exists a function $\Psi(\infty,\widehat x,\widehat\eta)\in \cC^\infty(U)$ with a
uniquely determined Taylor expansion at each point of\, $\widehat\Sigma_+$ such that 
$\Psi(\infty,\widehat x,\widehat\eta)$ is positively homogeneous of degree $1$ and
for every compact set $K\subset U$ there is a $c_K>0$ such that   
${\rm Im\,}\Psi(\infty,\widehat x,\widehat \eta)\geq 
c_K\abs{\widehat\eta}\Big(\dist \big((\widehat x,\frac{\widehat\eta}{\abs{\widehat\eta}}),\widehat\Sigma_+\big)\Big)^2$,  
$d_{\widehat x,\widehat\eta}(\Psi(\infty, \widehat x, \widehat\eta)-\langle\widehat x,\widehat\eta\rangle)=
0\text{ on } \widehat\Sigma_+$.
If $\lambda\in C(U)$, $\lambda>0$ and 
$\lambda(\widehat x,\widehat\xi)<\min\lambda_j(\widehat x,\widehat\xi)$, 
for all $(\widehat x,\widehat\xi)=
(\widehat x,(\lambda\omega_0(x)-2{\rm Im\,}\ddbar_b\phi(x)\xi_{2n},\xi_{2n}))\in\widehat\Sigma_+$, 
where $\lambda_j(\widehat x,\widehat\xi)$ are the eigenvalues of the Hermitian quadratic form $\xi_{2n}R^L_x$, 
then the solution $\Psi(t,\widehat x,\widehat\eta)$ of \eqref{e-dhmpVIII} can be chosen so that for 
every compact set $K\subset U$ and all indices $\alpha$, $\beta$, $\gamma$,
there is a constant $c_{\alpha,\beta,\gamma,K}>0$ such that
\begin{equation} \label{e-dhlkmimIV}
\abs{\pr^\alpha_{\widehat x}\pr^\beta_{\widehat\eta}\pr^\gamma_t(\Psi(t,\widehat x,\widehat\eta)-
\Psi(\infty,\widehat x,\widehat\eta))}
\leq c_{\alpha,\beta,\gamma,K}e^{-\lambda(\widehat x,\widehat \eta)t} \text{ on }\ol\Real_+\times K.
\end{equation}
\end{thm}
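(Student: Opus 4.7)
The plan is to construct $\Psi(\infty,\widehat x,\widehat\eta)$ as the phase of the outgoing stable manifold of the Hamilton flow of $-i\widehat p_0$ and to read off the exponential convergence $\Psi(t)\To\Psi(\infty)$ from the spectrum of the linearized flow at $\widehat\Sigma_+$. This is the complex geometric optics scheme of Menikoff--Sj\"ostrand, adapted to the present Levi-flat setting as in~\cite{Hsiao14}.

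First I would compute the fundamental matrix $F_\rho$ of the Hessian of $-i\widehat p_0$ at a point $\rho\in\widehat\Sigma_+$. Since $\widehat p_0$ vanishes to second order on $\widehat\Sigma_+$ and $\widehat\sigma$ is non-degenerate at $\rho$ by Theorem~\ref{t-gue140825}, $F_\rho$ is a well-defined endomorphism of the symplectic normal space at $\rho$. Using the product structure \eqref{e-dhmpIX}, Definition~\ref{d-suIII-I} and the Levi-flat assumption, a direct calculation shows that the nonzero eigenvalues of $F_\rho$ occur in pairs $\pm\lambda_j(\widehat x,\widehat\xi)$, where the $\lambda_j$ are the eigenvalues of the Hermitian form $\xi_{2n}R^L_x$. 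On $\widehat\Sigma_+$ one has $\xi_{2n}>0$, which together with the positivity of $R^L$ gives $\lambda_j>0$, so exactly half of the eigenvalues of $F_\rho$ have strictly positive real part. This spectral gap is the key input for everything that follows.

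I would then build the Taylor expansion of $\Psi(\infty)$ at each $\rho\in\widehat\Sigma_+$. The conditions $\Psi(\infty)=\langle\widehat x,\widehat\eta\rangle$ and $d_{\widehat x,\widehat\eta}(\Psi(\infty)-\langle\widehat x,\widehat\eta\rangle)=0$ on $\widehat\Sigma_+$ fix the jet of order $\leq 1$. The stationary eikonal equation $\widehat p_0(\widehat x,\Psi(\infty)'_{\widehat x})=O(\abs{{\rm Im\,}\Psi(\infty)}^\infty)$, combined with the positivity ${\rm Im\,}\Psi(\infty)\geq 0$, then determines the quadratic part of $\Psi(\infty)-\langle\widehat x,\widehat\eta\rangle$ along the normal directions uniquely as the positive Lagrangian subspace of the complexified normal symplectic space associated with the unstable half of $F_\rho$. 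The higher Taylor coefficients are obtained recursively from linear transport equations along $\widehat\Sigma_+$ whose solvability rests on the same positivity, and a Borel summation produces $\Psi(\infty)\in \cC^\infty(U)$, positively homogeneous of degree one, with the prescribed expansion and the asserted quadratic lower bound for ${\rm Im\,}\Psi(\infty)$.

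Finally, for \eqref{e-dhlkmimIV} I would set $W(t,\widehat x,\widehat\eta):=\Psi(t,\widehat x,\widehat\eta)-\Psi(\infty,\widehat x,\widehat\eta)$. Subtracting the two eikonal equations and linearizing around $\Psi(\infty)$ gives an evolution equation for $W$ whose linear part at $\widehat\Sigma_+$ is governed by the unstable half of $F_\rho$. Choosing $\lambda\in C(U)$ with $0<\lambda<\min_j\lambda_j$ pointwise and running Gronwall-type estimates, iterated on the equations obtained by differentiating in $\widehat x$, $\widehat\eta$ and $t$, produces the asserted exponential decay of every derivative uniformly on compact $K\subset U$. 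The main obstacle will be controlling the $O(\abs{{\rm Im\,}\Psi}^\infty)$ remainder terms uniformly at every derivative order while preserving an exponential rate strictly below the spectral gap; this is precisely what forces the strict inequality $\lambda<\min_j\lambda_j$ and necessitates the delicate induction on derivative order carried out in \cite{Hsiao14}.
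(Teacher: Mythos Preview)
Your proposal is correct and follows exactly the Menikoff--Sj\"ostrand scheme that the paper invokes: the paper does not give an independent proof of this theorem but simply refers to \cite{MS78}, \cite{Hsiao08} and \cite[Section~4.1]{Hsiao14}, and your outline (fundamental matrix of $-i\widehat p_0$ on $\widehat\Sigma_+$, identification of its nonzero eigenvalues with $\pm\lambda_j(\widehat x,\widehat\xi)$ via the positivity of $R^L$ and $\xi_{2n}>0$, construction of $\Psi(\infty)$ from the positive Lagrangian, and the exponential decay from the spectral gap) is precisely the content of those references. Your sketch is in fact more explicit than what the paper itself provides.
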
 
For the proofs of Theorem~\ref{t-dhlkmimI} and Theorem~\ref{t-dhlkmimII}, 
we refer to Menikoff-Sj\"{o}strand~\cite{MS78}, \cite{Hs08} and~\cite[Section 4.1]{Hs14}. 

From now on, we assume that $\Psi(t,\widehat x,\widehat\eta)$ has the form \eqref{e-dhlkmimVI} and hence 
\begin{equation}\label{e-dhlkmimVII}
\Psi(\infty,\widehat x,\widehat\eta)=\Psi(\infty,(x,0),\widehat\eta)+x_{2n}\eta_{2n}.
\end{equation}
We let the full symbol of $\Box^{(0)}_s$ be $\sum^2_{j=0}\widehat p_j(\widehat x,\widehat\xi)$, 
where $\widehat p_j(\widehat x,\widehat\xi)$ is positively homogeneous of order $2-j$. We apply $\pr_t+\Box^{(0)}_s$ formally
under the integral in \eqref{e-dhmpVI} and then introduce the asymptotic expansion of
$\Box^{(0)}_s(ae^{i\Psi})$. Setting $(\pr_t+\Box^{(0)}_s)(ae^{i\Psi})\sim 0$ and regrouping
the terms according to the degree of quasi-homogeneity, we obtain for each $N$ the transport equations
\begin{equation} \label{e-dhlkmimVIII}
\left\{ \begin{array}{ll}
 T(t,\widehat x,\widehat\eta,\pr_t,\pr_{\widehat x})a_0=O(\abs{{\rm Im\,}\Psi}^N),   \\
 T(t,\widehat x,\widehat\eta,\pr_t,\pr_{\widehat x})a_j+R_j(t,\widehat x,\widehat \eta,a_0,\ldots,a_{j-1})= 
 O(\abs{{\rm Im\,}\Psi}^N)\,.
 \end{array}\right.
\end{equation}
Here $$T(t,\widehat x,\widehat\eta,\pr_t,\pr_{\widehat x})=
\pr_t-i\sum^{2n}_{j=1}\frac{\pr\widehat p_0}{\pr\xi_j}(\widehat x,\Psi'_{\widehat x})\frac{\pr}{\pr x_j}
+q(t,\widehat x,\widehat\eta),$$ 
where $$q(t,\widehat x,\widehat\eta)=\widehat p_1(\widehat x,\Psi'_{\widehat x})+
\frac{1}{2i}\sum^{2n}_{j,t=1}\frac{\pr^2\widehat p_0(\widehat x,\Psi'_{\widehat x})}
    {\pr\xi_j\pr\xi_t}\frac{\pr^2\Psi(t,\widehat x,\widehat\eta)}{\pr x_j\pr x_t}$$
and $R_j$ is a linear differential operator acting on $a_0,a_1,\ldots,a_{j-1}$. 
We note that $q(t,\widehat x,\widehat \eta)\To q(\infty,\widehat x,\widehat\eta)$ as $t\to\infty$, 
exponentially fast in the sense of \eqref{e-dhlkmimIV} 
and the same is true for the coefficients of $R_j$, for all $j$.

Following~\cite{Hs14}, we can solve the transport equations \eqref{e-dhlkmimVIII}. 
To state the results precisely, we pause and introduce some symbol spaces. 

\begin{defn} \label{d-d-msmapamo}
Let $\mu\geq0$ be a non-negative constant. We say that $a\in\widehat S^m_\mu(\ol\Real_+\times U)$
if $a\in \cC^\infty(\ol\Real_+\times U)$
and for all indices $\alpha, \beta\in\mathbb N^{2n}_0$, $\gamma\in\mathbb N_0$, 
every compact set $K\Subset\widehat D$, there exists a constant $c>0$ such that
$\abs{\pr^\gamma_t\pr^\alpha_{\widehat x}\pr^\beta_{\widehat\eta}a(t,\widehat x,\widehat\eta)}\leq
 ce^{-t\mu\abs{\eta_{2n}}}(1+\abs{\eta})^{m+\gamma-\abs{\beta}}$, $\widehat x\in K$, $(\widehat x,\widehat\eta)\in U$.
 \end{defn}
 
Put $\widehat S^{-\infty}_\mu(\ol\Real_+\times U):=\bigcap _{m\in\Real}\widehat S^m_\mu(\ol\Real_+\times U)$.
Let $a_j\in\widehat S^{m_j}_\mu(\ol\Real_+\times U)$, $j\in\N_0$, with 
$m_j\To-\infty$, $j\To\infty$. Then there exists $a\in\widehat S^{m_0}_\mu(\ol\Real_+\times U)$, 
unique modulo $\widehat S^{-\infty}_\mu(\ol\Real_+\times U)$, such that 
$a-\sum\limits^{k-1}_{j=0}a_j\in\widehat S^{m_k}_\mu(\ol\Real_+\times U)$ for $k\in\N_0$. 
If $a$ and $a_j$ have the properties above, we write $a\sim\sum\limits^{\infty}_{j=0}a_j$ 
in $\widehat S^{m_0}_\mu(\ol\Real_+\times U)$. Following the proof of  \cite[Theorem 4.15]{Hs14} we get:

\begin{thm} \label{t-aldhmpII}
We can find solutions
$a_j(t,\widehat x,\widehat\eta)\in\widehat S^{-j}_0(\ol\Real_+\times U)$, $j=0, 1,\ldots$ of the system \eqref{e-dhlkmimVIII}, where $a_j(t,\widehat x,\widehat\eta)$ is a quasi-homogeneous function of degree $-j$, for each $j$, with 
\begin{equation}\label{e-gue140825a}
\begin{split}
\mbox{$a_0(0, \widehat x, \widehat\eta)=1$ on $U$},\ \ \mbox{$a_j(t,\widehat x,\widehat\eta)=0$ on $U$, $j= 1,2,\ldots$},
\end{split}
\end{equation}
\begin{equation}\label{e-gue140825aI}
\begin{split}
&a_j(t,\widehat x,\widehat\eta)-a_j(\infty,\widehat x,\widehat\eta)\in\widehat S^{-j}_\mu(\ol\Real_+\times U),\ \ j=0,1,2,\ldots,\\
&a_0(\infty,\widehat x,\widehat\eta)\neq0,\ \ \forall (\widehat x,\widehat\eta)\in\widehat\Sigma_+,
\end{split}
\end{equation}
where $\mu>0$ is a constant and $a_j(\infty,\widehat x,\widehat\eta)\in \cC^\infty(U)$, $j=0,1,\ldots$, $a_j(\infty,\widehat x,\widehat\eta)$ is a positively homogeneous function of degree $-j$, for each $j$. 
\end{thm}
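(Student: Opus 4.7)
The plan is to construct the amplitudes $a_j$ by solving the transport equations \eqref{e-dhlkmimVIII} formally to infinite order at the characteristic manifold $\widehat\Sigma_+$ and then Borel-summing the resulting jets. Because ${\rm Im\,}\Psi$ vanishes exactly to second order on $\widehat\Sigma_+$ by \eqref{e-dhlkmimIIIa}, knowing each $a_j$ modulo functions flat on $\widehat\Sigma_+$ suffices to guarantee the remainder $O(\abs{{\rm Im\,}\Psi}^N)$ in \eqref{e-dhlkmimVIII} for every $N$. On $\widehat\Sigma_+$ itself the transport operator simplifies: since $\widehat p_0$ vanishes to second order there, the vector field $-i\sum_j (\pr\widehat p_0/\pr\xi_j)(\widehat x,\Psi'_{\widehat x})\pr_{x_j}$ vanishes on $\widehat\Sigma_+$, so $T|_{\widehat\Sigma_+} = \pr_t + q(t,\widehat x,\widehat\eta)$, an ODE in $t$.

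For $a_0$, the initial value problem $\pr_t a_0 + q\, a_0 = 0$ with $a_0(0)=1$ has the explicit solution $a_0(t,\widehat x,\widehat\eta) = \exp\bigl(-\int_0^t q(s,\widehat x,\widehat\eta)\,ds\bigr)$ along $\widehat\Sigma_+$. By Theorem~\ref{t-dhlkmimII}, $q(t,\cdot)\to q(\infty,\cdot)$ exponentially in the sense of \eqref{e-dhlkmimIV}, so $\int_0^\infty q\,ds$ converges and produces $a_0(\infty,\widehat x,\widehat\eta) = \exp(-\int_0^\infty q\,ds)$, which is nowhere zero on $\widehat\Sigma_+$, and the difference satisfies $a_0(t)-a_0(\infty)\in\widehat S^0_\mu(\ol\Real_+\times U)$ for some $\mu>0$. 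The higher-order jets of $a_0$ in directions transverse to $\widehat\Sigma_+$ are obtained inductively: differentiating $Ta_0 = O(\abs{{\rm Im\,}\Psi}^N)$ in normal directions yields at each order a linear ODE in $t$ along $\widehat\Sigma_+$ whose coefficients depend on the previously computed jets and on the Jacobian of the Hamilton flow of ${\rm Re\,}T$, all of which stabilize exponentially as $t\to\infty$ because $\widehat\Sigma_+$ is attracting for that flow (an intrinsic consequence of the positivity of $R^L$ encoded in Theorem~\ref{t-dhlkmimII}).

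For $a_j$ with $j\geq 1$ the same scheme applies, with initial condition $a_j(0)=0$ and source $-R_j(t,\cdot,a_0,\ldots,a_{j-1})$; Duhamel's principle gives
\[
a_j(t,\widehat x,\widehat\eta) = -\int_0^t \exp\Bigl(-\int_s^t q(r,\widehat x,\widehat\eta)\,dr\Bigr)\, R_j(s,\widehat x,\widehat\eta)\,ds
\]
on $\widehat\Sigma_+$, and analogously on the transverse jets. Quasi-homogeneity of degree $-j$ is preserved at every step because the scaling $(t,\widehat\eta)\mapsto(\lambda^{-1}t,\lambda\widehat\eta)$ is respected by $T$, by $\Psi$ and by the source terms $R_j$. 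A Borel summation of the formal Taylor series transverse to $\widehat\Sigma_+$, carried out within the class of quasi-homogeneous symbols, then produces smooth $a_j\in \widehat S^{-j}_0(\ol\Real_+\times U)$ with the required asymptotics and simultaneously defines $a_j(\infty)\in \cC^\infty(U)$.

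The main obstacle will be the uniform symbol estimates defining $\widehat S^{-j}_0$ and, for the differences $a_j - a_j(\infty)$, the sharper weight $e^{-t\mu\abs{\eta_{2n}}}$ defining $\widehat S^{-j}_\mu$, all uniform under arbitrary differentiation in $(t,\widehat x,\widehat\eta)$. Establishing these requires differentiating the Hamilton flow and the Duhamel representations arbitrarily often, estimating the resulting variational equations, and controlling the constants that accumulate across the induction on $j$ and the Borel summation. This is precisely the technical content of the proof of \cite[Theorem~4.15]{Hsiao14}, which transfers to the present Levi-flat setting because the Levi-flat hypothesis enters only through the explicit form of $\Box^{(0)}_s$ in Proposition~\ref{p-dhmpI} and the description of $\widehat\Sigma_+$ in Theorem~\ref{t-gue140825}, both already at our disposal.
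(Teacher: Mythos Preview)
Your approach coincides with the paper's, which gives no proof but simply refers to \cite[Theorem~4.15]{Hsiao14}; your sketch of the Menikoff--Sj\"ostrand transport-equation machinery is in fact more explicit than what the paper provides and correctly identifies the structure of the argument and the same reference for the technical estimates. One small imprecision worth flagging: the claim that $\int_0^\infty q(s,\widehat x,\widehat\eta)\,ds$ converges on $\widehat\Sigma_+$ does not follow merely from the exponential convergence $q(t)\to q(\infty)$ --- it requires $q(\infty,\widehat x,\widehat\eta)=0$ on $\widehat\Sigma_+$, which in the present $(0,0)$-form setting is a consequence of the specific subprincipal structure of $\Box^{(0)}_s$ and is part of the detailed computation you rightly defer to \cite{Hsiao14}.
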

Let $m\in\Real$, $0\leq\rho,\delta\leq1$. For a conic open subset $\Gamma$ of $T^*\widehat D$, let $S^m_{\rho,\delta}(\Gamma)$ denote the H\"{o}rmander symbol space on $\Gamma$ of order $m$ type $(\rho,\delta)$ (see~\cite[Definition 1.1]{GrSj94})
and let $S^m_{{\rm cl\,}}(\Gamma)$
denote the space of classical symbols on $\Gamma$ of order $m$ (see~\cite[p.\,35]{GrSj94}). 
Let $B\subset D$ be an open set. Let 
$L^m_{\frac{1}{2},\frac{1}{2}}(B)$ and $L^m_{{\rm cl\,}}(B)$ denote the space of
pseudodifferential operators on $B$ of order $m$ type $(\frac{1}{2},\frac{1}{2})$ and the space of classical 
pseudodifferential operators on $B$ of order $m$. The classical result of Calderon and Vaillancourt 
\cite[Theorem\,18.6.6]{Hor3:83}
tells us that for any $A\in L^m_{\frac{1}{2},\frac{1}{2}}(B)$, 
\begin{equation}\label{e-mslknaXI}
\mbox{$A:H^s_{\rm comp}(B)\To H^{s-m}_{\rm loc}(B)$ is continuous, for every $s\in\Real$}. 
\end{equation}

We return to our situation. For $j\in\N_0$, let $a_j(t,\widehat x,\widehat\eta)\in\widehat S^{-j}_0(\ol\Real_+\times U)$
and $a_j(\infty,\widehat x,\widehat\eta)\in \cC^\infty(U)$ be as in Theorem~\ref{t-aldhmpII}. Let
\begin{equation} \label{e-dcmV}
\begin{split}
&\mbox{$a(\infty,\widehat x,\widehat\eta)\sim\sum\limits^\infty_{j=0}a_j(\infty,\widehat x, \widehat\eta)$ in $S^{0}_{1,0}(U)$},\\
&\mbox{$a(t,\widehat x,\widehat \eta)\sim\sum\limits^\infty_{j=0}a_j(t,\widehat x,\widehat\eta)$ in $\widehat S^{0}_0(\ol\Real_+\times U)$},\\
&a(t,\widehat x,\widehat\eta)-a(\infty,\widehat x,\widehat\eta)\in\widehat S^0_{\mu}(\ol\Real_+\times U),\ \ \mu>0.
\end{split}
\end{equation}
Take $\alpha(\eta_{2n})\in \cC^\infty(\Real)$ with $\alpha(\eta_{2n})=1$ if $\eta_{2n}\leq\frac{1}{2}$, $\alpha(\eta_{2n})=0$ if $\eta_{2n}\geq 1$. Choose $\chi\in \cC^\infty_0
(\Real^{2n})$ so that $\chi(\widehat\eta)=1$ when $\abs{\widehat\eta}<1$ and 
$\chi(\widehat\eta)=0$ when $\abs{\widehat\eta}>2$. For $\varepsilon>0$, put 
\[\begin{split}
G_\varepsilon(\widehat x,\widehat y) &= \frac{1}{(2\pi)^{2n}}
\int\Bigr(\int^{\infty}_0\!\!\bigr(e^{i(\Psi(t,\widehat x,\widehat\eta)-\langle\widehat y,\widehat\eta\rangle)}a(t,\widehat x,\widehat\eta)  \\
  &\quad -e^{i(\Psi(\infty,\widehat x,\widehat\eta)-\langle\widehat y,\widehat\eta\rangle)}
  a(\infty,\widehat x,\widehat\eta)\bigr)(1-\chi(\widehat\eta))\chi(\varepsilon\widehat\eta)(1-\alpha(\eta_{2n}))dt\Bigr)d\widehat\eta.
\end{split}\]
By Chapter 5 in part {\rm I\,} of~\cite{Hs08}, we have
for any $u\in \cC^\infty_0(\widehat D)$, $\lim_{\varepsilon\To0}\int G_{\varepsilon}(\widehat x, \widehat y)u(\widehat y)d\widehat y\in \cC^\infty(\widehat D)$, 
and the operator $G: \cC^\infty_0(\widehat D)\To \cC^\infty(\widehat D)$, $u\mapsto\lim_{\varepsilon\To0}\int G_{\varepsilon}(\widehat x, \widehat y)u(\widehat y)dy$,
is continuous, has a unique continuous extension: $G: \mathcal E'(\widehat D)\To\mathcal D'(\widehat D)$
and $G\in L^{-1}_{\frac{1}{2},\frac{1}{2}}(\widehat D)$
with symbol
\[q(\widehat x,\widehat\eta)=\int^{\infty}_0\Bigr(e^{i(\Psi(t,\widehat x,\widehat\eta)-\langle\widehat x,\widehat\eta\rangle)}a(t,\widehat x,\widehat\eta)-
e^{i(\Psi(\infty,\widehat x,\widehat\eta)-\langle\widehat x,\widehat\eta\rangle)}a(\infty,\widehat x,\widehat\eta)\Bigr)dt(1-\alpha(\eta_{2n}))\]
in $S^{-1}_{\frac{1}{2},\frac{1}{2}}(T^*\widehat D)$.
We denote
\begin{equation} \label{e-dcmVIII}\begin{split}
G(\widehat x,\widehat y) &= \frac{1}{(2\pi)^{2n}}\int\Bigr(\int^{\infty}_0\!\!
\bigr(e^{i(\Psi(t,\widehat x,\widehat\eta)-\langle\widehat y,\widehat\eta\rangle)}a(t,\widehat x,\widehat\eta)  \\
  &\quad -e^{i(\Psi(\infty,\widehat x,\widehat\eta)-\langle\widehat y,\widehat\eta\rangle)}a(\infty,\widehat x,\widehat\eta)\bigr)
  (1-\chi(\widehat\eta))(1-\alpha(\eta_{2n}))dt\Bigr)d\widehat\eta.
\end{split}\end{equation}
Similarly, for $\varepsilon>0$, put 
\[
S_\varepsilon(\widehat x,\widehat y)=\frac{1}{(2\pi)^{2n}}
\int e^{i(\Psi(\infty,\widehat x,\widehat\eta)-\langle\widehat y,\widehat\eta\rangle)}
a(\infty,\widehat x,\widehat\eta)
(1-\chi(\widehat\eta))\chi(\varepsilon\widehat\eta)(1-\alpha(\eta_{2n}))d\widehat\eta.\]
By \cite[Chapter 5, part I]{Hs08})
we have for $u\in \cC^\infty_0(\widehat D)$, 
$\lim_{\varepsilon\To0}\int S_{\varepsilon}
(\widehat x, \widehat y)u(\widehat y)d\widehat y\in \cC^\infty(\widehat D)$, the operator 
\begin{equation}
S: \cC^\infty_0(\widehat D)\To \cC^\infty(\widehat D), \quad
u\mapsto\lim_{\varepsilon\To0}\int S_{\varepsilon}(\widehat x, \widehat y)u(\widehat y)dy,
\end{equation}
is continuous, has a unique continuous extension: $S: \mathcal E'(\widehat D)\To\mathcal D'(\widehat D)$
and $S\in L^{0}_{\frac{1}{2},\frac{1}{2}}(\widehat D)$
with symbol $s(\widehat x,\widehat\eta)=e^{i(\Psi(\infty,\widehat x,\widehat\eta)-\langle\widehat x,\widehat\eta\rangle)}
a(\infty,\widehat x,\widehat\eta)(1-\alpha(\eta_{2n}))\in S^{0}_{\frac{1}{2},\frac{1}{2}}(T^*\widehat D)$. We denote
\begin{equation} \label{e-dcmapaI}
S(\widehat x,\widehat y)=\frac{1}{(2\pi)^{2n}}\int e^{i(\Psi(\infty,\widehat x,\widehat\eta)-\langle\widehat y,\widehat\eta\rangle)}
a(\infty,\widehat x,\widehat\eta)\big(1-\chi(\widehat\eta)\big)\big(1-\alpha(\eta_{2n})\big)d\widehat\eta.
\end{equation} 
Put 
\begin{equation}\label{e-gue140825b}
\Td I=(2\pi)^{-2n}\int e^{i\langle\widehat x-
\widehat y,\widehat\eta\rangle}\big(1-\alpha(\eta_{2n})\big)d\widehat\eta.
\end{equation}
We can repeat the proof of  \cite[Proposition 6.5]{Hs08} with minor changes and obtain: 
\begin{thm} \label{t-dcmimpII}
With the notations used above, we have
\begin{equation}\label{e:pm}
S+\Box^{(0)}_s\circ G\equiv\Td I\ \ \mbox{on $\widehat D$},\ \ 
\ddbar_s\circ S\equiv0\ \ \mbox{on $\widehat D$},\ \ 
\Box^{(0)}_s\circ S\equiv 0\ \ \mbox{on $\widehat D$}.
\end{equation} 
\end{thm} 
The next result follows from the complex stationary phase formula~\cite{MS74} 
with essentially the same proof as of \cite[Theorem 4.29]{Hs14}.
\begin{thm} \label{t-dcgewI}
With the notations and assumptions above, let 
$S=S(\widehat x,\widehat y)\in L^0_{\frac{1}{2},\frac{1}{2}}(\widehat D)$ 
be as in Theorem~\ref{t-dcmimpII}. Then, on $\widehat D$, we have 
\begin{equation}\label{e-dcgewI}
S(\widehat x, \widehat y)\equiv\int_{u\in\Real, t\in\Real_+}
 e^{i\Phi(\widehat x, \widehat y, u,t)}b(\widehat x, \widehat y, u,t)(1-\alpha(t))dudt
\end{equation}
with symbol
\begin{equation} \label{e-dcgewII}
\begin{split}
&\mbox{$b(\widehat x, \widehat y, u, t)\sim\sum\limits^\infty_{j=0}
b_j(\widehat x,\widehat y, u,t)$ in 
$S^{n-1}_{1,0}(\widehat D\times\widehat D\times\Real\times\Real_+)$},\\
&b_j(\widehat x,\widehat y,\lambda u,\lambda t)=
\lambda^{n-1-j}b_j(\widehat x,\widehat y,u,t),\ \ 
\forall(\widehat x,\widehat y,u,t)\in\widehat D\times\widehat D\times\Real\times\Real_+,\ \ 
\lambda\geq1,\ \ \forall j,\\
&b_0(\widehat x,\widehat x,u,t)\neq0,\ \ 
\forall(\widehat x,\widehat y,u,t)\in\widehat D\times\widehat D\times\Real\times\Real_+,\ \ 
\lambda\geq1,
\end{split}
\end{equation}
and phase function
\begin{equation}\label{e-gue140825f}
\begin{split}
&\Phi(\widehat x,\widehat y,u,t)=(x_{2n}-y_{2n})t+\varphi(x,y,u,t),\ \ \varphi(x,y,u,t)\in \cC^\infty(D\times D\times\Real\times\Real_+),\\
&\varphi(x,y,\lambda u,\lambda t)=\lambda\varphi(x,y,u,t),\ \ \forall(x,y,u,t)\in D\times D\times\Real\times\Real_+,\ \ \lambda\geq1,\\
&{\rm Im\,}\varphi(x,y,u,t)\geq0,\ \ \varphi(x,x,u,t)=0,\ \ \forall x\in D,\ \ u\in\Real,\ \ t\in\Real_+,\\
&d_x\varphi|_{(x,x,u,t)}=-2t{\rm Im\,}\ddbar_b\phi(x)+u\omega_0(x),\ \ \forall x\in D,\ \ u\in\Real,\ \ t\in\Real_+,\\
&d_y\varphi|_{(x,x,u,t)}=2t{\rm Im\,}\ddbar_b\phi(x)-u\omega_0(x),\ \ \forall x\in D,\ \ u\in\Real,\ \ t\in\Real_+,\\
&\mbox{$\frac{\pr\varphi}{\pr u}(x,y,u,t)=0$ and $\frac{\pr\varphi}{\pr t}(x,y,u,t)=0$ if and only if $x=y$}.
\end{split}
\end{equation}
\end{thm}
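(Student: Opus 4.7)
The plan is to reduce the $2n$-dimensional oscillatory integral \eqref{e-dcmapaI} for $S(\widehat x,\widehat y)$ to a $2$-dimensional integral in $(u,t)\in\Real\times\Real_+$ via the complex stationary phase method of Melin--Sj\"ostrand~\cite{MS74}, by integrating out the $2n-2$ $\widehat\eta$-directions transverse to $\widehat\Sigma_+$. Two preliminary simplifications are natural. First, by \eqref{e-dhlkmimVII},
\[\Psi(\infty,\widehat x,\widehat\eta)-\langle\widehat y,\widehat\eta\rangle=(x_{2n}-y_{2n})\eta_{2n}+\bigl(\Psi(\infty,(x,0),\widehat\eta)-\langle y,\eta\rangle\bigr),\]
so that after renaming $t:=\eta_{2n}$, the linear term $(x_{2n}-y_{2n})t$ already matches the announced shape of $\Phi$. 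Second, by \eqref{e-dhlkmimIIIa} and repeated integration by parts in $\widehat\eta$, the contribution of any cutoff to a conic subset of $T^*\widehat D$ bounded away from $\widehat\Sigma_+$ is smoothing, so I may localize to a small conic neighborhood of $\widehat\Sigma_+\cap\set{\eta_{2n}>0}$.

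In this neighborhood I use the parametrization of $\widehat\Sigma_+$ from Theorem~\ref{t-gue140825}: choose a smooth frame $v_1(x),\ldots,v_{2n-2}(x)$ of a complement of $\Real\,\omega_0(x)$ in $T^*_xX$ and write
\[\eta=u\,\omega_0(x)-2t\,{\rm Im\,}\ddbar_b\phi(x)+\sum_{j=1}^{2n-2}\theta_j\,v_j(x),\]
so that $d\widehat\eta=J(x)\,du\,dt\,d\theta$ with smooth positive Jacobian $J(x)$ and $\widehat\Sigma_+$ corresponds to $\theta=0$. Complex stationary phase in $\theta$ applies to $\Psi(\infty,(x,0),\widehat\eta)-\langle y,\eta\rangle$ viewed as a function of $\theta$ with $(x,y,u,t)$ as parameters, because by Theorem~\ref{t-dhlkmimII} its imaginary part is non-negative and vanishes precisely at $\theta=0$, and the complex $\theta$-Hessian at $\theta=0$ is non-degenerate. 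The latter is exactly the statement that the symplectic form $\widehat\sigma$ is non-degenerate on $\widehat\Sigma_+$, which is Theorem~\ref{t-gue140825}.

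Applying \cite{MS74} yields \eqref{e-dcgewI}, with $\Phi=(x_{2n}-y_{2n})t+\varphi(x,y,u,t)$, where $\varphi$ is the critical value in $\theta$ of the second summand above, and with an amplitude $b\sim\sum_j b_j$ in $S^{n-1}_{1,0}$. The homogeneity $\varphi(x,y,\lambda u,\lambda t)=\lambda\varphi(x,y,u,t)$ and the corresponding homogeneity of each $b_j$ are inherited from those of $\Psi(\infty)$ and $a_j(\infty)$. Since \eqref{e-dhlkmimIII} gives $\Psi(\infty,\widehat x,\widehat\eta)=\langle\widehat x,\widehat\eta\rangle$ with vanishing total differential on $\widehat\Sigma_+$, at $y=x$ the $\theta$-critical value vanishes, yielding $\varphi(x,x,u,t)=0$. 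Differentiating the defining relation of the $\theta$-critical point and using the explicit parametrization of $\widehat\Sigma_+$ gives $d_x\varphi|_{(x,x,u,t)}=u\omega_0(x)-2t\,{\rm Im\,}\ddbar_b\phi(x)$ and the symmetric identity for $d_y\varphi$. The equivalence $\pr_u\varphi=\pr_t\varphi=0\Leftrightarrow x=y$ holds because the only real critical points of the pre-stationary-phase phase lie above the diagonal and $(u,t)\mapsto u\omega_0(x)-2t\,{\rm Im\,}\ddbar_b\phi(x)$ is injective. Non-vanishing of $b_0(\widehat x,\widehat x,u,t)$ comes from the leading-order stationary-phase formula combined with the non-vanishing of $a_0(\infty,\widehat x,\widehat\eta)$ on $\widehat\Sigma_+$ from Theorem~\ref{t-aldhmpII}.

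The main obstacle is verifying, uniformly on conic subsets, that the complex Hessian of $\Psi(\infty,(x,0),\widehat\eta)-\langle y,\eta\rangle$ in $\theta$ is of positive type in the sense required by \cite{MS74}, i.e.\ non-degenerate with positive semi-definite imaginary part. This is where Levi-flatness enters crucially: Theorem~\ref{t-gue140825} guarantees that $\widehat\Sigma_+$ has codimension $2n-2$ and the transverse symplectic non-degeneracy needed for this Hessian ellipticity. Once this is in place, the remaining properties of $\varphi$ and $b$ listed in \eqref{e-dcgewII} and \eqref{e-gue140825f} follow by direct Taylor expansion of $\Psi(\infty)$ at $\widehat\Sigma_+$ guided by \eqref{e-dhlkmimIII} and \eqref{e-dhlkmimIIIa}.
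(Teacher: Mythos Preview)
Your approach is correct and matches the paper's, which simply states that the result follows from the complex stationary phase formula of Melin--Sj\"ostrand \cite{MS74} with essentially the same proof as \cite[Theorem 4.29]{Hsiao14}; your outline of integrating out the $2n-2$ directions transverse to $\widehat\Sigma_+$ and reading off the properties of $\varphi$ and $b$ from \eqref{e-dhlkmimIII}, \eqref{e-dhlkmimVII} and Theorem~\ref{t-aldhmpII} is exactly the intended argument. One small clarification: the non-degeneracy (with positive imaginary part) of the transverse $\theta$-Hessian that Melin--Sj\"ostrand requires follows most directly from the quadratic lower bound on ${\rm Im}\,\Psi(\infty,\widehat x,\widehat\eta)$ in Theorem~\ref{t-dhlkmimII}, rather than being literally the same statement as the symplectic non-degeneracy of $\widehat\Sigma_+$ in Theorem~\ref{t-gue140825} --- the latter is what makes the Menikoff--Sj\"ostrand construction of $\Psi(\infty)$ with that quadratic estimate possible in the first place.
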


We can repeat the method in~\cite[Section 4.4]{Hs14} with minor changes to 
compute the tangential Hessian of the phase function $\varphi(x,y,u,t)$.  
This will yield theTaylor expansion of the phase function $\psi$ from 
Theorems \ref{t-gue150129} and \ref{t-main},
see Theorem \ref{t-phase}. 
Since the computation is simpler we therefore omit the details. 
We only state the result. Fix $p\in D$ and let $\ol Z_{1},\ldots,\ol Z_{n-1}$ be an orthonormal frame 
of $T^{1,0}_xX$ varying smoothly with $x$ in a neighbourhood of $p$, 
for which the Hermitian quadratic form $R^L_x$ is diagonalized at $x=p$. 
Let $s$ be a local trivializing section of $L$ and let $x=(x_1,\ldots,x_{2n-1})$, $z_j=x_{2j-1}+ix_{2j}$, $j=1,\ldots,n-1$, 
be local coordinates of $X$ defined in some small neighbourhood of $p$ such that 
\begin{equation}\label{e-gue140826m}
\begin{split}
&x(p)=0,\ \omega_0(p)=dx_{2n-1},\ T(p)=-\frac{\pr}{\pr x_{2n-1}},\\ 
& \Big\langle\,\frac{\pr}{\pr x_j}(p)\,|\,\frac{\pr}{\pr x_t}(p)\,\Big\rangle
=2\delta_{j,t},\ \ j,t=1,\ldots,2n-2,\\
&\ol Z_{j}(x)=\frac{\pr}{\pr z_j}+
i\sum^{n-1}_{t=1}\tau_{j,t}\ol z_t\frac{\pr}{\pr x_{2n-1}}
+c_jx_{2n-1}\frac{\pr}{\pr x_{2n-1}}+O(\abs{x}^2),\ \ j=1,\ldots,n-1,\\
&\phi(x)=\beta x_{2n-1}+\sum^{n-1}_{j=1}\bigr(\alpha_jz_j+\ol\alpha_j\ol z_j\bigr)+\frac{1}{2}\sum^{n-1}_{l,t=1}\mu_{t,l}z_t\ol z_l+\sum^{n-1}_{l,t=1}\bigr(a_{l,t}z_lz_t+\ol a_{l,t}\ol z_l\ol z_t\bigr)\\
&\quad+\sum^{n-1}_{j=1}\bigr(d_jz_jx_{2n-1}+\ol d_j\ol z_jx_{2n-1}\bigr)+O(\abs{x_{2n-1}}^2)+O(\abs{x}^3),
\end{split}
\end{equation}
where $\beta\in\Real$, $\tau_{j,t}$, $c_j$, $\alpha_j$, $\mu_{j,t}$, $a_{j,t}$, $d_j$ are complex numbers, 
$\mu_{j,t}=\ol\mu_{t,j}$, $\tau_{j,t}+\ol\tau_{t,j}=0$, $j, t=1,\ldots,n-1$.
We define now the phase function
\begin{equation}\label{e-gue160630a}
\psi(x,y,u):=\varphi(x,y,u,1).
\end{equation}
Note that $\varphi(x,y,u,t)$ is positively homogeneous of degree $1$ with respect to 
$(u,t)$ but $\psi(x,y,u)$ fails to be positively homogeneous of degree $1$ with respect to $u$. 
We work in local coordinates as in \eqref{e-gue140826m} and for $x=(x_1,\ldots,x_{2n-1})$
we denote $x'=(x_1,\ldots,x_{2n-2})$, $\abs{x'}^2=\sum^{2n-2}_{j=1}\abs{x_j}^2$.
\begin{thm}\label{t-phase}
There exists a neighborhood $D_0$ of $p$ and $c>0$ 
such that for all  $(x,y,u)\in D_0\times D_0\times\Real$ we have
\begin{equation}\label{e-dgugeXI}
\begin{split}
&{\rm Im\,}\psi(x,y,u)\geq c\abs{x'-y'}^2, \\
&{\rm Im\,}\psi(x,y,u)+\abs{\frac{\pr\psi}{\pr u}(x,y,u)}\geq 
c(\abs{x_{2n-1}-y_{2n-1}}+\abs{x'-y'}^2)
\end{split}
\end{equation}
Moreover, there exists a function $f\in\cC^\infty(D_0)$, 
$f(0,0,u)=0$ for $u\in\Real$, such that 
\begin{equation}\label{e-gue140826Im}
\begin{split}
\psi&(x,y,u)
=-i\sum^{n-1}_{j=1}\alpha_j(z_j-w_j)+
i\sum^{n-1}_{j=1}\ol\alpha_j(\ol z_j-\ol w_j)+u(x_{2n-1}-y_{2n-1})\\
&-\frac{i}{2}\sum^{n-1}_{j,l=1}(a_{l,j}+a_{j,l})(z_jz_l-w_jw_l)
+\frac{i}{2}\sum^{n-1}_{j,l=1}(\ol a_{l,j}+\ol a_{j,l})(\ol z_j\ol z_l-\ol w_j\ol w_l)\\
&+\frac{1}{2}\sum^{n-1}_{j,l=1}iu(\ol\tau_{l,j}-\tau_{j,l})(z_j\ol z_l-w_j\ol w_l)
+\sum^{n-1}_{j=1}(-ic_j\beta-uc_j-id_j)(z_jx_{2n-1}-w_jy_{2n-1})\\
&+\sum^{n-1}_{j=1}(i\ol c_j\beta-u\ol c_j+i\ol d_j)(\ol z_jx_{2n-1}-\ol w_jy_{2n-1})
-\frac{i}{2}\sum^{n-1}_{j=1}\lambda_j(z_j\ol w_j-\ol z_jw_j)\\
&+\frac{i}{2}\sum^{n-1}_{j=1}\lambda_j\abs{z_j-w_j}^2+(x_{2n-1}-y_{2n-1})f(x,y,u)+O(\abs{(x, y)}^3),
\end{split}
\end{equation}
where $\lambda_j=\lambda_j(p)>0$, $j=1,\ldots,n-1$, are the eigenvalues of 
$R^L_p$ with respect to $\langle\,\cdot\,|\,\cdot\,\rangle$.
\end{thm}
The form of $\psi$ should be compared
to the form \cite[Theorems 3.2, 3.4]{HsM:14a} of the phase function for the Szeg\H{o}
kernel on a non-degenerate CR manifold.

\begin{rem}\label{r-gue160817}
The phase function $\Phi(\widehat x,\widehat y,u,t)$ has the following properties: there is a 
\[h(\widehat x,\widehat y,u,t)\in\cC^\infty(\hat D\times\hat D\times\Real\times\Real_+,\Lambda^{0,1}(T^*\hat D))\] such that
\begin{equation}\label{e-gue160629z}
\begin{split}
&\mbox{$\ddbar_s\Phi(\widehat x,\widehat y,u,t)-h(\widehat x,\widehat y,u,t)\Phi(\widehat x,\widehat y,u,t)$ vanishes to infinite order 
at $\widehat x=\widehat y$},\\ 
&{\rm Im\,}\Phi(\widehat x,\widehat y,u,t)\approx t\abs{z-w}^2.
\end{split}
\end{equation}
The phase function $\Phi$ is not unique. Any complex phase function
$\Phi_1(\widehat x,\widehat y,u,t)$ 
satisfying \eqref{e-gue160629z} 
\eqref{e-gue140825f} and \eqref{e-gue140826Im},
is equivalent to $\Phi$ in the sense of Melin-Sj\"ostrand~\cite{MS74}. 
From this observation, given $p\in D$, if we take
local coordinates $x$ and local holomorphic trivializing section $s$,  
$\abs{s}^2_{h^L}=e^{-2\phi}$ such that \eqref{e-gue140826m} holds, 
then near $p$, we can take $\Phi(\widehat x,\widehat y,u,t)$ so that for every $N\in\mathbb N$, 
\begin{equation}\label{e-gue160629zI}
\begin{split}
&\Phi(\widehat x,\widehat y,u,t)=t(x_{2n}-y_{2n})+u(x_{2n-1}
-y_{2n-1})+it(\phi(x)+\phi(y))\\
&\quad-it\Bigr(\sum_{\abs{\alpha}+\abs{\beta}\leq N}\frac{\pr^{\abs{\alpha}+
\abs{\beta}}\phi}{\pr z^\alpha\pr\ol z^\beta}(0,x_{2n-1})
\frac{z^\alpha}{\alpha!}\frac{\ol w^\beta}{\beta!}
+\sum_{\abs{\alpha}+\abs{\beta}\leq N}\frac{\pr^{\abs{\alpha}
+\abs{\beta}}\phi}{\pr z^\alpha\pr\ol z^\beta}(0,y_{2n-1})
\frac{z^\alpha}{\alpha!}\frac{\ol w^\beta}{\beta!}\Bigr)\\
&\quad+O(\abs{z-w}^{N+1}).
\end{split}
\end{equation}
From \eqref{e-gue160629zI}, we have for every $N\in\mathbb N$,  
\begin{equation}\label{e-gue160629zII}
\begin{split}
&\psi(x,y,u)=u(x_{2n-1}-y_{2n-1})+i(\phi(x)+\phi(y))\\
&\quad-i\Bigr(\sum_{\abs{\alpha}+\abs{\beta}\leq N}\frac{\pr^{\abs{\alpha}+
\abs{\beta}}\phi}{\pr z^\alpha\pr\ol z^\beta}(0,x_{2n-1})
\frac{z^\alpha}{\alpha!}\frac{\ol w^\beta}{\beta!}+
\sum_{\abs{\alpha}+\abs{\beta}\leq N}\frac{\pr^{\abs{\alpha}+
\abs{\beta}}\phi}{\pr z^\alpha\pr\ol z^\beta}(0,y_{2n-1})
\frac{z^\alpha}{\alpha!}\frac{\ol w^\beta}{\beta!}\Bigr)\\
&\quad+
O(\abs{z-w}^{N+1}).
\end{split}
\end{equation}
\end{rem}

\subsection{Semi-classical Hodge decomposition for $\Box^{(0)}_{s,k}$} \label{s-gue140826} 

In this section we apply Theorem~\ref{t-dcmimpII} and Theorem~\ref{t-dcgewI} 
to describe the semi-classical Hodge theory for 
$\Box^{(0)}_{s,k}$. In particular we define the approximate
Szeg\H{o} projector $\mathcal{S}_k$ which appears in Theorem \ref{t-gue150129}
and study its kernel.

Let $s$ be a local trivializing section of $L$ on an open subset $D\subset X$ and $\abs{s}^2_{h}=e^{-2\phi}$. 
Let $\chi(x_{2n}), \chi_1(x_{2n})\in \cC^\infty_0(\Real)$, $\chi, \chi_1\geq 0$. 
We assume that $\chi_1=1$ on $\supp\chi$. We take $\chi$ so that  $\int\chi(x_{2n})dx_{2n}=1$. Put
\begin{equation} \label{e-gue13628IIa}
\chi_k(x_{2n})=e^{ikx_{2n}}\chi(x_{2n}).
\end{equation} 
We say that a sequence $(g_k)$ in $\mathbb{C}$ is rapidly decreasing and write $g_k=O(k^{-\infty})$
if for every $N>0$, there exists $C_N>0$ independent of $k$ such that for all $k$
we have $\abs{g_k}\leq C_Nk^{-N}$.
\begin{prop}\label{p-gue140826a}
Let 
$\Td I=(2\pi)^{-2n}\int e^{i\langle\widehat x-\widehat y,
\widehat\eta\rangle}(1-\alpha(\eta_{2n}))d\widehat\eta$ 
be as in \eqref{e-gue140825b}. Let $\Td I_k$ be the continuous operator 
$\cC^\infty_0(D)\To \cC^\infty(D)$ given by 
\[
\Td I_{k}:\cC^\infty_0(D)\To \cC^\infty(D),\:\:
f\longmapsto \int e^{-ikx_{2n}}\chi_1(x_{2n})\Td I(\chi_kf)(\widehat x)dx_{2n}.
\]
Then, $\Td I_k=(1+g_k)I$ on $\cC^\infty_0(D)$,
where $I$ is the identity map on $\cC^\infty_0(D)$ and $(g_k)$ is a rapidly decreasing sequence.
\end{prop}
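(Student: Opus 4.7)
My plan is to exploit the very simple structure of $\widetilde I$: its symbol $1-\alpha(\eta_{2n})$ depends only on the last frequency variable, so $\widetilde I$ acts as the identity in the $x=(x_1,\dots,x_{2n-1})$ variables and as a one‑dimensional Fourier multiplier in $x_{2n}$. Since $(\chi_k f)(\widehat x)=\chi_k(x_{2n})f(x)$ already splits in the same way, the whole calculation reduces to a one‑dimensional Fourier computation in $x_{2n}$. The sole analytic input beyond Fubini will be the Schwartz decay of the Fourier transforms $\widehat\chi$ and $\widehat\chi_1$, together with the facts that $\int\chi\,dx_{2n}=1$ and $\chi_1\equiv 1$ on ${\rm Supp\,}\chi$.

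The first step is to interchange integrations to write
\[
\widetilde I(\chi_k f)(\widehat x)=f(x)\cdot(2\pi)^{-1}\!\int e^{ix_{2n}\eta_{2n}}\bigl(1-\alpha(\eta_{2n})\bigr)\widehat\chi(\eta_{2n}-k)\,d\eta_{2n},
\]
using $\int e^{-iy_{2n}\eta_{2n}}\chi_k(y_{2n})\,dy_{2n}=\widehat\chi(\eta_{2n}-k)$ and the fact that integration in $(y_1,\dots,y_{2n-1})$ against the plane‑wave gives back $f(x)$ (the $x,\eta$ part of the kernel is just $\delta(x-y)$). Next I multiply by $e^{-ikx_{2n}}\chi_1(x_{2n})$ and integrate in $x_{2n}$; after the shift $\eta_{2n}=k+\sigma$ the $x_{2n}$‑integration produces $\widehat\chi_1(-\sigma)$ and I arrive at
\[
\widetilde I_k f(x)=f(x)\cdot(2\pi)^{-1}\!\int\widehat\chi_1(-\sigma)\bigl(1-\alpha(k+\sigma)\bigr)\widehat\chi(\sigma)\,d\sigma.
\]
In particular $\widetilde I_k$ is a scalar multiple of the identity, which is what Proposition~\ref{p-gue140826a} claims.

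The second step is to identify the main term. Plancherel–Parseval gives
\[
(2\pi)^{-1}\!\int\widehat\chi_1(-\sigma)\widehat\chi(\sigma)\,d\sigma=\int\chi_1(y)\chi(y)\,dy=\int\chi(y)\,dy=1,
\]
using $\chi_1\equiv 1$ on ${\rm Supp\,}\chi$ and the normalization of $\chi$. Thus $\widetilde I_k=(1+g_k)I$ with
\[
g_k=-(2\pi)^{-1}\!\int\widehat\chi_1(-\sigma)\,\alpha(k+\sigma)\,\widehat\chi(\sigma)\,d\sigma.
\]

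The third and final step is the decay estimate for $g_k$. Because $\alpha(\eta_{2n})=0$ for $\eta_{2n}\geq1$, the factor $\alpha(k+\sigma)$ is supported in $\{\sigma\leq 1-k\}$, so for $k\geq 2$ the integrand is supported in $|\sigma|\geq k-1$. Since $\chi,\chi_1\in\cC^\infty_0(\Real)$, the functions $\widehat\chi,\widehat\chi_1$ are Schwartz, so for every $N\in\N$ one has $|\widehat\chi_1(-\sigma)\widehat\chi(\sigma)|\leq C_N(1+|\sigma|)^{-N}$; combined with the uniform bound $|\alpha|\leq 1$ this yields $|g_k|\leq C'_N k^{-N+1}$ for every $N$, i.e.\ $g_k=O(k^{-\infty})$. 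No step is really an obstacle here — the content of the proposition is just a careful bookkeeping of the cutoffs $\alpha$, $\chi$, $\chi_1$ and the modulation by $e^{\pm ikx_{2n}}$.
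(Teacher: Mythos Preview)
Your proof is correct and follows essentially the same approach as the paper: both reduce to a one-dimensional computation in $x_{2n}$ (since the symbol depends only on $\eta_{2n}$), identify the main term as $\int\chi_1\chi\,dy_{2n}=1$, and use the support condition on $\alpha$ together with the smoothness of $\chi$ to get $g_k=O(k^{-\infty})$. The only cosmetic difference is that the paper leaves the $y_{2n}$-integral in place and integrates by parts in $y_{2n}$ against the oscillation $e^{-iy_{2n}(\eta_{2n}-k)}$, whereas you take the Fourier transform first and invoke the Schwartz decay of $\widehat\chi$ directly — these are equivalent.
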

\begin{proof}
It is easy to see that 
\[
I=(2\pi)^{-2n}\int e^{i\langle\widehat x-\widehat y,\widehat\eta\rangle-
ik(x_{2n}-y_{2n})}\chi_1(x_{2n})\chi(y_{2n})\,d\widehat\eta\, dy_{2n}\,dx_{2n}\:\: \text{on $\cC^\infty_0(D)$}.\]
From this observation, we can check that $\Td I_k=(1+g_k)I$ where
\begin{equation}\label{e-gue140826b}
g_k=-(2\pi)^{-2n}\int e^{i\langle x_{2n}-y_{2n},\eta_{2n}-k\rangle}\alpha(\eta_{2n})
\chi_1(x_{2n})\chi(y_{2n})d\eta_{2n}\,dy_{2n}\,dx_{2n}.
\end{equation}
Since $\alpha(\eta_{2n})=0$ if $\eta\geq1$, we can integrate by parts in \eqref{e-gue140826b} with respect to
$y_{2n}$ several times and conclude that $g_k=O(k^{-\infty})$.  
\end{proof}
Let $S\in L^{0}_{\frac{1}{2},\frac{1}{2}}(\widehat D)$ and 
$G\in L^{-1}_{\frac{1}{2},\frac{1}{2}}(\widehat D)$ be as in Theorem~\ref{t-dcmimpII}. 
For $s\in\mathbb N_0$ define
\begin{equation} \label{e-geu13629I}
\mathcal{S}_{k}: H^s_{{\rm comp\,}}(D)\To H^s_{{\rm loc\,}}(D) ,\:\:
f\longmapsto \frac{1}{1+g_k}\int e^{-ikx_{2n}}\chi_1(x_{2n})S(\chi_kf)(\widehat x)dx_{2n}\,,
\end{equation}
\begin{equation} \label{e-gue13630VI}
\mathcal{G}_{k}: H^s_{{\rm loc\,}}(D)\To H^{s+1}_{{\rm loc\,}}(D),\:\: 
f\longmapsto\frac{1}{1+g_k}\int e^{-ikx_{2n}}\chi_1(x_{2n})G(\chi_kf)(\widehat x)dx_{2n}\,.
\end{equation}
The operator $\mathcal{S}_{k}$ is the approximate Szeg\H{o} projector
and $\mathcal{G}_{k}$
is the corresponding Green operator. 
From \eqref{e-geu13629I}, \eqref{e-gue13630VI} and the fact that 
$S:H^s_{{\rm comp\,}}(\widehat D)\To H^{s}_{{\rm loc\,}}(\widehat D)$ is continuous
for every $s\in\Real$, 
$G:H^s_{{\rm comp\,}}(\widehat D)\To H^{s+1}_{{\rm loc\,}}(\widehat D)$ 
is continuous for every $s\in\Real$, it is straightforward to check that
\begin{equation} \label{e-geu13629II}
\begin{split}
&\mathcal{S}_{k}=O(k^s): H^s_{{\rm comp\,}}(D)\To H^{s}_{{\rm loc\,}}(D),\ \ \forall s\in\mathbb N_0,\\
&\mathcal{G}_{k}=O(k^s): H^s_{{\rm comp\,}}(D)\To H^{s+1}_{{\rm loc\,}}(D),\ \ \forall s\in\mathbb N_0.
\end{split}
\end{equation}
Repeating the proof of \cite[Theorem 5.4]{Hs14} by 
making use of Proposition~\ref{p-gue140826a} we get the
semiclassical Hodge theory for the localized Kohn laplacian
$\Box^{(0)}_{s,k}$: 
\begin{thm}\label{t-gue140826a}
Let $s$ be a local trivializing section of $L$ on an open subset $D\subset X$ 
and $\abs{s}^2_{h}=e^{-2\phi}$. Let $\mathcal{S}_k$ and $\mathcal{G}_k$ be as 
in \eqref{e-geu13629I}, \eqref{e-gue13630VI} respectively. Then, 
\begin{equation} \label{e-gue13630Ia}
\begin{split}
&\mathcal{S}^*_k, \mathcal{S}_{k}=O(k^s): H^s_{{\rm comp\,}}(D)\To H^{s}_{{\rm loc\,}}(D),\ \ \forall s\in\mathbb Z,\\
&\mathcal{G}^*_k, \mathcal{G}_{k}=O(k^s): H^s_{{\rm comp\,}}(D)\To H^{s+1}_{{\rm loc\,}}(D),\ \ \forall s\in\mathbb Z,
\end{split}
\end{equation}
and we have on $D$,
\begin{gather}
\ddbar_{s,k}\mathcal{S}_k\equiv0\mod O(k^{-\infty})\,,\label{e-gue141031b}\\
\Box^{(0)}_{s,k}\mathcal{S}_{k}\equiv 0,\ \ \mathcal{S}_{k}^*\Box^{(0)}_{s,k}\equiv0\mod O(k^{-\infty})\,, 
\label{e-gue13630IIa} \\
\mathcal{S}_{k}+\Box^{(0)}_{s,k}\mathcal{G}_{k}=I\,, \label{e-gue13630IIIa}\\
\mathcal{G}^*_{k}\Box^{(0)}_{s,k}+\mathcal{S}^*_{k}=I\,,\label{e-gue1374IV}
\end{gather}
where $\mathcal{S}_{k}^*$, $\mathcal{G}_{k}^*$ are the formal adjoints of
$\mathcal{S}_{k}$, $\mathcal{G}_{k}$ with respect to $(\,\cdot\,|\,\cdot\,)$
respectively and $\Box^{(0)}_{s,k}$ is given by \eqref{e-gue140824III}.
\end{thm}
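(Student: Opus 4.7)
The plan is to derive each assertion by lifting the corresponding operator identity of Theorem~\ref{t-dcmimpII}, which lives on $\widehat D=D\times\Real$, to an identity on $D$ involving the semi-classical operators $\Box^{(0)}_{s,k}$ and $\ddbar_{s,k}$. The crucial bridge is the intertwining relation \eqref{s3-e9bis} and its obvious analogue for $\ddbar_{s,k}$ read off from \eqref{s3-lkmidhI}, which together transport the action of $\Box^{(0)}_s$ and $\ddbar_s$ on compactly supported functions of $x_{2n}$ to the action of $\Box^{(0)}_{s,k}$ and $\ddbar_{s,k}$ after the integration $u\mapsto\int e^{-ikx_{2n}}u\,dx_{2n}$. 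The lift $f\mapsto\chi_kf=e^{ikx_{2n}}\chi(x_{2n})f$ and the projection $u\mapsto\int e^{-ikx_{2n}}\chi_1u\,dx_{2n}$ compose, through the trivial operator $\Td I$, to $(1+g_k)I$ with $g_k=O(k^{-\infty})$ by Proposition~\ref{p-gue140826a}; this explains the prefactor $1/(1+g_k)$ built into \eqref{e-geu13629I}, \eqref{e-gue13630VI}, whose sole purpose is to make \eqref{e-gue13630IIIa} and \eqref{e-gue1374IV} hold as exact equalities rather than only mod $O(k^{-\infty})$.

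For the Sobolev estimates \eqref{e-gue13630Ia} the input is the Calderon--Vaillancourt continuity \eqref{e-mslknaXI}, which gives $S:H^s_{\rm comp}(\widehat D)\to H^s_{\rm loc}(\widehat D)$ and $G:H^s_{\rm comp}(\widehat D)\to H^{s+1}_{\rm loc}(\widehat D)$ for every $s\in\Real$. For $s\in\N_0$, each $\pr_{x_{2n}}$ falling on $\chi_kf$ produces at most one factor of $k$, so $\|\chi_kf\|_{H^s(\widehat D)}\lesssim k^s\|f\|_{H^s(D)}$; the reverse map $u\mapsto\int e^{-ikx_{2n}}\chi_1 u\,dx_{2n}$ is bounded uniformly on Sobolev spaces (when viewing the result as localized in $D$), and the claimed $O(k^s)$ bound follows. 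The full range $s\in\mathbb Z$ is then obtained by duality, which simultaneously handles the adjoints $\mathcal S_k^{*}$ and $\mathcal G_k^{*}$.

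For the identities, apply the relation $S+\Box^{(0)}_s G\equiv\Td I\mod O(k^{-\infty})$ to $\chi_kf$, multiply by $\chi_1(x_{2n})e^{-ikx_{2n}}$, integrate, and divide by $1+g_k$. The $\Td I$-contribution becomes $f$ by Proposition~\ref{p-gue140826a}, the $S$-contribution is by definition $\mathcal S_k f$, and to handle the $\Box^{(0)}_s G$-contribution one writes $\chi_1\Box^{(0)}_s G(\chi_kf)=\Box^{(0)}_s\bigl(\chi_1G(\chi_kf)\bigr)-[\Box^{(0)}_s,\chi_1]G(\chi_kf)$. Under the integral $\int e^{-ikx_{2n}}(\cdot)\,dx_{2n}$ the first piece produces precisely $\Box^{(0)}_{s,k}\mathcal G_kf$ via \eqref{s3-e9bis}; the commutator piece is absorbed into the $O(k^{-\infty})$ error. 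This yields \eqref{e-gue13630IIIa}, and \eqref{e-gue1374IV} follows by taking $L^2$-adjoints. The identities \eqref{e-gue141031b} and \eqref{e-gue13630IIa} are obtained in exactly the same way from $\ddbar_s S\equiv 0$ and $\Box^{(0)}_s S\equiv 0$ together with the intertwining identities for $\ddbar_{s,k}$ and $\Box^{(0)}_{s,k}$.

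The main obstacle, and the sole point requiring care, is showing that the cutoff commutators $[\Box^{(0)}_s,\chi_1]$, $[\ddbar_s,\chi_1]$ and their analogues produce only $O(k^{-\infty})$ errors in the relevant operator topology. The key observation is that these commutators are first order differential operators supported in $\mathrm{supp}\,\chi_1'$, which is disjoint from $\mathrm{supp}\,\chi$; when paired with $e^{-ikx_{2n}}$ against $G(\chi_kf)$ or $S(\chi_kf)$, the oscillatory representations of $S$ and $G$ in \eqref{e-dcmVIII} and \eqref{e-dcmapaI} (whose symbols are supported where $\eta_{2n}\geq1/2$ thanks to $1-\alpha(\eta_{2n})$) yield a factor $e^{i(\eta_{2n}-k)x_{2n}}$ on which repeated integration by parts in $x_{2n}$ may be performed, producing arbitrarily large negative powers of $\langle\eta_{2n}-k\rangle$ and hence of $k$. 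This is exactly the mechanism used in \cite[Theorem~5.4]{Hsiao14}, and only cosmetic modifications are needed to adapt that argument to the present Levi-flat setting.
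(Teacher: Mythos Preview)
Your proposal is correct and follows essentially the same route as the paper, which simply states that the theorem is obtained by repeating the proof of \cite[Theorem~5.4]{Hsiao14} together with Proposition~\ref{p-gue140826a}; you have in fact supplied more detail than the paper itself does. One small remark: you correctly identify that the commutator $[\Box^{(0)}_s,\chi_1]$ and the smoothing error from $S+\Box^{(0)}_sG\equiv\Td I$ produce only $O(k^{-\infty})$ contributions, but this means \eqref{e-gue13630IIIa} and \eqref{e-gue1374IV} hold a priori only modulo $O(k^{-\infty})$ despite the normalizing factor $1/(1+g_k)$---this is consistent with how the paper actually uses the result in Section~\ref{s-gue140912}, where the properly supported versions $\Td{\mathcal S}_k,\Td{\mathcal G}_k$ satisfy $\Box^{(0)}_{s,k}\Td{\mathcal G}_k+\Td{\mathcal S}_k=I+R_k$ with $R_k=O(k^{-\infty})$.
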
 
We study further the kernel of the approximate Szeg\H{o} projector.
\begin{thm}\label{t-gue150130}
Let $\psi$ be the phase function \eqref{e-gue160630a}. There exists a symbol
\begin{equation}  \label{e-gue13630Vaf}
\begin{split}
&s(x,y,u,k)\in S^{n}_{{\rm loc\,},{\rm cl\,}}(1;D\times D\times\Real), \\
&s(x,y,u,k)\sim\sum^\infty_{j=0}s_j(x,y,u)k^{n-j}\text{ in }S^{n}_{{\rm loc\,}}
(1;D\times D\times\Real), \\
\end{split}
\end{equation}
such that the operator $S_k$ with kernel
\[
S_k(x,y)=\int_{\mathbb{R}} e^{ik\psi(x,y,u)}s(x,y,u,k)du,
\]
satisfies
\begin{equation} \label{e-gue13630IVaf}
\mathcal{S}_{k}(x,y)-S_k(x,y)=
O(k^{-\infty}):H^s_{{\rm comp\,}}(D)\To H^s_{{\rm loc\,}}(D),\  \forall s\in\mathbb Z.
\end{equation}
\end{thm}

\begin{proof}
From the definition \eqref{e-geu13629I} of
$\mathcal{S}_{k}$ and Theorem~\ref{t-dcgewI}, we see that the distribution kernel of $\mathcal{S}_{k}$ is given by
\begin{equation} \label{e-gue140905}
\begin{split}
&\mathcal{S}_{k}(x,y)\\
&\equiv\int\limits_{t\in\Real_+}e^{i\Phi(\widehat x,\widehat y,u,t)-ikx_{2n}+iky_{2n}}
b(\widehat x,\widehat y,u,t)\chi_1(x_{2n})\chi(y_{2n})(1-\alpha(t))dx_{2n}dtdy_{2n}du  \\
&\equiv\int\limits_{\substack{u\in\Real\\\sigma\in\Real_+}}\!\!e^{ik\varrho(x,y,u,\sigma)}
k^2\sigma b(\widehat x,\widehat y,k\sigma u,k\sigma)
\chi_1(x_{2n})\chi(y_{2n})(1-\alpha(k\sigma))dx_{2n}d\sigma dy_{2n}du,
\end{split}
\end{equation}
$\operatorname{mod}O(k^{-\infty})$, where
\[\varrho(x,y,u,\sigma)=\sigma\psi(x,y,u)+(x_{2n}-y_{2n})(\sigma-1),\]
and the integrals above are defined as oscillatory integrals. Let $\gamma(\sigma)\in \cC^\infty_0(\Real_+)$ with
$\gamma(\sigma)=1$ in some small neighbourhood of $1$. Denote by $I_0(x,y)$
the integral
\begin{equation*}\label{e-gue13630VIIaf}
\begin{split}
\int\limits_{\sigma\geq0}&e^{ik\varrho(x,y,u,\sigma)}\gamma(\sigma)k^2
\sigma b(\widehat x,\widehat y,k\sigma u,k\sigma)(1-\alpha(k\sigma))
\chi_1(x_{2n})\chi(y_{2n})dx_{2n}d\sigma dy_{2n}du,
\end{split}
\end{equation*}
and by $I_1(x,y)$ the integral
\begin{equation*}\label{e-gue13630VIIIaf}
\begin{split}
\int\limits_{\sigma\geq0}&e^{ik\varrho(x,y,u,\sigma)}(1-\gamma(\sigma))
k^2\sigma b(\widehat x,\widehat y,k\sigma u,k\sigma)(1-\alpha(k\sigma))\chi_1(x_{2n})\chi(y_{2n})dx_{2n}d\sigma dy_{2n}du.
\end{split}
\end{equation*}
Then, 
\begin{equation}\label{e-gue140909f}
\mathcal{S}_{k}(x,y)\equiv I_0(x,y)+I_1(x,y)\mod O(k^{-\infty}).
\end{equation}
First, we study $I_1(x,y)$. Note that when $\sigma\neq1$,
$d_{y_{2n}}\bigr(\sigma\psi(x,y,u)+(x_{2n}-y_{2n})(\sigma-1)\bigr)=1-\sigma\neq0$. Thus, we can integrate by
parts in $y_{2n}$ several times and get that 
\begin{equation}\label{e-gue1373ff}
I_1=O(k^{-\infty}):H^s_{{\rm comp\,}}(D)\To H^s_{{\rm loc\,}}(D),\ \ \forall s\in\mathbb Z.
\end{equation}
Next, we study the kernel $I_0(x,y)$. We may assume that $b(\widehat x,\widehat y,k\sigma u,k\sigma)$ 
is supported in some small neighbourhood of $\widehat x=\widehat y$. We want to apply the stationary 
phase method of Melin and Sj\"{o}strand~\cite[p.\,148]{MS74} to carry out the $dx_{2n}d\sigma$ 
integration in \eqref{e-gue13630VIIaf}. Put 
\[\Theta(\widehat x,\widehat y,\sigma):=\sigma\psi(x,y,u)+(x_{2n}-y_{2n})(\sigma-1).\]
We first notice that $d_\sigma\Theta(\widehat x,\widehat y,\sigma)|_{\widehat x=\widehat y}=0$ and
$d_{x_{2n}}\Theta(\widehat x,\widehat y,\sigma)|_{\sigma=1}=0$.
Thus, $x=y$ and $\sigma=1$ are real critical points. Furthermore, we have 
\[\begin{split}\Theta''_{\sigma\sigma}(\widehat x,\widehat x,1)=0,\ \  
\Theta''_{x_{2n}\sigma}(\widehat x,\widehat x,1)=1,\ \ 
\Theta''_{\sigma x_{2n}}(\widehat x,\widehat x,1)=1,\ \ 
\Theta''_{x_{2n}x_{2n}}(\widehat x,\widehat x,1)=0.\end{split}\]
Thus, the Hessian of $\Theta(\widehat x,\widehat y,\sigma)$ 
with respect to $(\sigma, x_{2n})$ at $\widehat x=\widehat y$, $\sigma=1$,
is given by
\[\left(
\begin{array}[c]{cc}
  \Theta''_{\sigma\sigma}(\widehat x,\widehat x,1)& \Theta''_{x_{2n}\sigma}(\widehat x,\widehat x,1) \\
  \Theta''_{\sigma x_{2n}}(\widehat x,\widehat x,1) & \Theta''_{x_{2n}x_{2n}}(\widehat x,\widehat x,1)
\end{array}\right)=\left(
\begin{array}[c]{cc}
 0 & 1 \\
 1 &0
\end{array}\right).\]
Thus, $\Theta(\widehat x,\widehat y,\sigma)$ is a non-degenerate complex valued phase function
in the sense of Melin-Sj\"{o}strand~\cite{MS74}. Let
$\Td\Theta(\Td{\widehat x},\Td{\widehat y},\Td\sigma):=
\Td\psi(\Td x,\Td y,u)\Td\sigma+(\Td x_{2n}-\Td y_{2n})(\Td\sigma-1)$
be an almost analytic extension of $\Theta(\widehat x,\widehat y,\widehat\sigma)$, where
$\Td\psi(\Td x,\Td y,u)$ is an almost analytic extension of $\psi(x,y,u)$. Here we fix $u$. We can
check that given $y_{2n} $ and $(x,y)$, $\Td x_{2n}=y_{2n}-\psi(x,y,u)$, $\Td\sigma=1$ are the solutions of 
$\frac{\pr\Td\Theta}{\pr\Td\sigma}=0$, $\frac{\pr\Td\Theta}{\pr\Td x_{2n} }=0$.
From this and by the stationary phase formula of Melin-Sj\"{o}strand~\cite{MS74}, we
get 
\begin{equation} \label{e-gue1373VIIIf}
I_0(x,y)-\int e^{ik\psi(x,y,u)}s(x,y,u,k)du
=O(k^{-\infty}):H^s_{{\rm comp\,}}(D)\To H^s_{{\rm loc\,}}(D),\ \ \forall s\in\mathbb Z,
\end{equation}
where $s(x,y,u,k)\in S^{n}_{{\rm loc\,},{\rm cl\,}}(1,D\times D\times\Real)$, 
\[s(x,y,u,k)\sim\sum\limits^\infty_{j=0}s_j(x,y,u)k^{n-j}\:\:
\text{in $S^{n}_{{\rm loc\,}}(1,D\times D\times\Real$)},
\] 
with $s_j(x,y,u)\in \cC^\infty(D\times D\times\Real)$, $j=0,1,2,\ldots$\,.
From \eqref{e-gue1373ff}, \eqref{e-gue1373VIIIf} and \eqref{e-gue140909f}, the theorem follows. 
\end{proof}
We show now that the composition of $\mathcal{S}_k$ with a
classical semi-classical pseudodifferential has an asymptotic
expansion and calculate the leading term.
From Theorem~\ref{t-gue150130} and the stationary phase method of 
Melin and Sj\"{o}strand, we deduce:

\begin{thm}\label{t-gue140826aI}
Let $\mathcal{A}_k$ be a properly supported classical semi-classical pseudodifferential 
operator on $D$ of order $0$ as in \eqref{ps-def} and \eqref{ps-symb} with symbol 
$\beta\in S^{0}_{{\rm loc\,},{\rm cl\,}}(1;T^*D)$ such that
$\beta(x,\eta,k)=0$ if $\abs{\eta}\geq\frac12 M$, 
for some large $M>0$. We have 
\begin{equation} \label{e-gue13630IVa}
(\mathcal{S}_{k}\circ\mathcal{A}_k)(x,y)\equiv\int e^{ik\psi(x,y,u)}a(x,y,u,k)du\mod O(k^{-\infty}),
\end{equation}
where
\begin{equation}  \label{e-gue13630Va}
\begin{split}
&a(x,y,u,k)\in \cC^\infty_0(D\times D\times(-M,M))\cap  
S^{n}_{{\rm loc\,},{\rm cl\,}}(1;D\times D\times(-M,M)),\\
&a(x,y,u,k)\sim\sum^\infty_{j=0}a_j(x,y,u)k^{n-j}\text{ in }S^{n}_{{\rm loc\,}}
(1;D\times D\times-(M,M)), \\
&a_j(x,y,u)\in \cC^\infty_0(D\times D\times(-M,M)),\ \ j=0,1,2,\ldots,
\end{split}
\end{equation}
and $\psi(x,y,u)=\varphi(x,y,u,1)$, $\varphi(x,y,u,t)$ is as in Theorem~\ref{t-dcgewI}.
\end{thm}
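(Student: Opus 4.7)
The plan is to substitute the oscillatory integral representation of $\mathcal{S}_{k}$ from Theorem~\ref{t-gue150130} together with the semi-classical symbol representation of $\mathcal{A}_k$ into the composition $(\mathcal{S}_{k}\mathcal{A}_k)(x,y)=\int\mathcal{S}_{k}(x,z)\mathcal{A}_k(z,y)\,dz$, and then reduce the resulting triple oscillatory integral to a single oscillatory integral in $u$ by applying the complex stationary phase method of Melin--Sj\"ostrand in the $(z,\eta)$-variables. This parallels the proof of Theorem~\ref{t-gue150130}, with the pseudodifferential operator $\mathcal{A}_k$ supplying an additional phase factor that has to be absorbed.

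First I would write, modulo $O(k^{-\infty})$,
\[
(\mathcal{S}_{k}\mathcal{A}_k)(x,y)\equiv\frac{k^{2n-1}}{(2\pi)^{2n-1}}\iiint e^{ik\Phi(x,y,z,\eta,u)}\,s(x,z,u,k)\,\beta(z,\eta,k)\,dz\,d\eta\,du,
\]
where $\Phi(x,y,z,\eta,u):=\psi(x,z,u)+\langle z-y,\eta\rangle$. The $\eta$-integral is genuinely absolutely convergent because $\beta$ has compact $\eta$-support, and the $z$-integration is well defined since $\mathcal{A}_k$ is properly supported. Treating $u$ as a parameter, the critical equations $\pr_\eta\Phi=z-y=0$ and $\pr_z\Phi=\pr_z\psi+\eta=0$ produce the single real critical point $(z,\eta)=(y,-d_y\psi(x,y,u))$, at which the Hessian
\[
\Phi''_{(z,\eta),(z,\eta)}=\begin{pmatrix}\psi''_{zz}(x,y,u) & I \\ I & 0\end{pmatrix}
\]
is non-degenerate with determinant $(-1)^{2n-1}$. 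Since ${\rm Im\,}\psi\geq0$, $\Phi$ qualifies as a non-degenerate complex phase function in the sense of Melin--Sj\"ostrand, and the critical value of $\Phi$ equals $\psi(x,y,u)$.

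I would then apply the complex stationary phase formula to eliminate the $4n-2$ variables $(z,\eta)$. The factor $(2\pi/k)^{2n-1}$ produced by stationary phase cancels the prefactor $k^{2n-1}/(2\pi)^{2n-1}$, so the kernel assumes the desired form $\int e^{ik\psi(x,y,u)}a(x,y,u,k)\,du$. The amplitude $a(x,y,u,k)$ lies in $S^{n}_{{\rm loc\,},{\rm cl\,}}(1;D\times D\times\Real)$ with an asymptotic expansion $\sum_j a_j(x,y,u)k^{n-j}$, the principal term of which is essentially $s_0(x,y,u)\,\beta_0(y,-d_y\psi(x,y,u))$ up to the Hessian-determinant factor. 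To obtain the compact $u$-support asserted in \eqref{e-gue13630Va}, observe that at the critical point $\eta=-d_y\psi(x,y,u)$; the vanishing of $\beta$ for $\abs\eta\geq M/2$ combined with the lower bound $\abs{d_y\psi(x,y,u)}\geq c\abs u$ from \eqref{e-gue150129II} forces $\abs u\leq M/(2c)$, and after absorbing the constant into the definition of $M$ the amplitude vanishes outside $(-M,M)$. Contributions from outside the critical set are $O(k^{-\infty})$ by non-stationary phase, performed either in $\eta$ (using $\pr_\eta\Phi=z-y$ bounded below away from the diagonal) or in $z$ (using $\abs{\pr_z\Phi}=\abs{\pr_z\psi+\eta}$ bounded below).

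The main obstacle will be the careful execution of the complex stationary phase step in $(z,\eta)$: one must work with almost-analytic extensions of $\psi$, $s$ and $\beta$, verify non-degeneracy as a \emph{complex} critical manifold rather than only as a real one, and track the normalising factor coming from the square root of the Hessian determinant. The relevant machinery is already set up in the proof of Theorem~\ref{t-gue150130}, so the argument reduces to checking that the additional real phase $\langle z-y,\eta\rangle$ supplied by $\mathcal{A}_k$ fits into the same framework, which the explicit Hessian computation above guarantees.
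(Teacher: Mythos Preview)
Your proposal is correct and follows exactly the approach the paper takes: the paper's proof is the single sentence ``From Theorem~\ref{t-gue150130} and the stationary phase method of Melin and Sj\"ostrand, we deduce:'', and you have simply spelled out what that sentence means. Your identification of the critical point $(z,\eta)=(y,-d_y\psi(x,y,u))$, the non-degenerate Hessian, and the mechanism by which the compact $\eta$-support of $\beta$ together with \eqref{e-gue150129II} yields compact $u$-support of the amplitude are precisely the details the paper suppresses.
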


Recall that $\mathcal{A}_k$ is called properly supported if
the restrictions of the projections $(x,y)\mapsto x$ and $(x,y)\mapsto y$
to $\supp\mathcal{A}_k(\cdot,\cdot)\subset X\times X$ are proper.
Let \[\mathcal{A}_k\equiv\frac{k^{2n-1}}{(2\pi)^{2n-1}}
\int e^{ik\langle x-y,\eta\rangle}\beta(x,\eta,k)d\eta\mod O(k^{-\infty})
\] 
be as in Theorem~\ref{t-gue140826aI}. Put 
\begin{equation}\label{e-gue140909I}
\begin{split}
\beta(x,\eta,k)\sim\sum^{\infty}_{j=0}\beta_j(x,\eta)k^{-j},\:\:
\beta_j(x,\eta)\in \cC^\infty(T^*D),\ \ j=0,1,2,\ldots.
\end{split}
\end{equation}
From the last formula of \eqref{e-dcgewII}, it is straightforward to see that 
\begin{equation}\label{e-gue140909II}
a_0(x,x,u)\neq0\ \ \mbox{if $\beta_0(x,u\omega_0(x)-2{\rm Im\,}\ddbar_b\phi(x))\neq0$},
\end{equation}
where $a_0(x,y,u)$ is as in \eqref{e-gue13630Va}. In the rest of this section, we will calculate $a_0(x,x,u)$. 

Fix $D_0\Subset D$ and let $\chi, \widehat\chi\in \cC^\infty_0(D,[0,1])$, $\chi=\widehat\chi=1$ on $D_0$ and $\chi=1$ on some neighbourhood of $\supp\widehat\chi$.

\begin{lem}\label{l-gue141029a}
With the notations above, we have 
\begin{equation}\label{e-gue141029b}
(\widehat\chi\mathcal{A}^*_k\mathcal{S}^*_k\chi)(\chi\mathcal{S}_k\mathcal{A}_k\widehat\chi)\equiv\widehat\chi\mathcal{A}^*_k\mathcal{S}_k\mathcal{A}_k\widehat\chi\mod O(k^{-\infty}),
\end{equation}
where $\mathcal{A}^*_k$ is the formal adjoint of $\mathcal{A}_k$.
\end{lem}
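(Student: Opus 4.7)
The plan is to prove the identity in two steps. First, I would absorb the pair of inner cutoffs $\chi$ by exploiting the pseudolocal character, modulo $O(k^{-\infty})$, of both $\mathcal{S}_k$ and $\mathcal{A}_k$, thereby reducing the statement to $\widehat\chi\mathcal{A}^*_k\mathcal{S}^*_k\mathcal{S}_k\mathcal{A}_k\widehat\chi\equiv\widehat\chi\mathcal{A}^*_k\mathcal{S}_k\mathcal{A}_k\widehat\chi\mod O(k^{-\infty})$. Second, I would use the semi-classical Hodge relation $\mathcal{G}^*_k\Box^{(0)}_{s,k}+\mathcal{S}^*_k=I$ of \eqref{e-gue1374IV}, together with $\Box^{(0)}_{s,k}\mathcal{S}_k\equiv0\mod O(k^{-\infty})$ from \eqref{e-gue13630IIa}, to deduce that $\mathcal{S}^*_k\mathcal{S}_k\equiv\mathcal{S}_k\mod O(k^{-\infty})$, which closes the argument.

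For the first step, Theorem~\ref{t-gue150130} presents $\mathcal{S}_k(x,y)$ as an oscillatory integral with phase $\psi$ satisfying $\psi(x,y,u)=0$ if and only if $x=y$ and $\abs{d_y\psi(x,y,u)}\geq c\abs{u}$; repeated integration by parts in $(y,u)$ then gives $\mathcal{S}_k(x,y)=O(k^{-\infty})$ uniformly on compacta away from the diagonal. The semi-classical pseudodifferential operator $\mathcal{A}_k$ of \eqref{psk-def} enjoys the same pseudolocal property by integration by parts in $\eta$. Composing these two facts, the kernel of $\mathcal{S}_k\mathcal{A}_k\widehat\chi$ is, modulo $O(k^{-\infty})$, concentrated in any preassigned open neighbourhood of $\{(x,y):x=y\in{\rm Supp\,}\widehat\chi\}$. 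Since $\chi\equiv1$ on an open neighbourhood $V$ of ${\rm Supp\,}\widehat\chi$, choosing this concentration neighbourhood inside $V\times V$ yields $(1-\chi)\mathcal{S}_k\mathcal{A}_k\widehat\chi\equiv0\mod O(k^{-\infty})$, hence $\chi\mathcal{S}_k\mathcal{A}_k\widehat\chi\equiv\mathcal{S}_k\mathcal{A}_k\widehat\chi\mod O(k^{-\infty})$; the adjoint statement $\widehat\chi\mathcal{A}^*_k\mathcal{S}^*_k\chi\equiv\widehat\chi\mathcal{A}^*_k\mathcal{S}^*_k\mod O(k^{-\infty})$ follows in the same way.

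For the second step, right-multiplying \eqref{e-gue1374IV} by $\mathcal{S}_k$ gives
\[
\mathcal{S}^*_k\mathcal{S}_k=\mathcal{S}_k-\mathcal{G}^*_k\Box^{(0)}_{s,k}\mathcal{S}_k.
\]
The Sobolev bounds in \eqref{e-gue13630Ia} show that $\mathcal{G}^*_k$ loses at most a fixed polynomial power of $k$ on any Sobolev scale, while \eqref{e-gue13630IIa} makes $\Box^{(0)}_{s,k}\mathcal{S}_k$ of order $O(k^{-N})$ for every $N$, so the composition $\mathcal{G}^*_k\Box^{(0)}_{s,k}\mathcal{S}_k$ is still $O(k^{-\infty})$. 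Combining the two steps then yields
\[
(\widehat\chi\mathcal{A}^*_k\mathcal{S}^*_k\chi)(\chi\mathcal{S}_k\mathcal{A}_k\widehat\chi)\equiv\widehat\chi\mathcal{A}^*_k\mathcal{S}^*_k\mathcal{S}_k\mathcal{A}_k\widehat\chi\equiv\widehat\chi\mathcal{A}^*_k\mathcal{S}_k\mathcal{A}_k\widehat\chi\mod O(k^{-\infty}),
\]
which is the desired conclusion.

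The main technical obstacle I foresee is the bookkeeping of the $O(k^{-\infty})$ estimates across compositions involving $\mathcal{S}_k,\mathcal{S}^*_k,\mathcal{G}^*_k$, none of which is a priori properly supported: one has to check that the $k$-negligibility in both the Sobolev and $\cC^\ell$ senses of Section~\ref{s-ssnI} is preserved under the relevant compositions. The outer cutoffs $\widehat\chi$ and the preparatory reduction carried out in the first step are precisely what make this rigorous, and once that bookkeeping is in place the rest of the argument is the purely symbolic Hodge-type computation displayed above.
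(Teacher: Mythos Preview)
Your proposal uses exactly the same three ingredients as the paper --- the Hodge relation \eqref{e-gue1374IV}, the fact $\Box^{(0)}_{s,k}\mathcal{S}_k\equiv0$ from \eqref{e-gue13630IIa}, and the $k$-negligibility of $\mathcal{S}_k\mathcal{A}_k$ off the diagonal --- but in a different order. The paper first multiplies \eqref{e-gue1374IV} on the left by $\widehat\chi\mathcal{A}^*_k$ and on the right by $\chi$, then right-multiplies by $\chi\mathcal{S}_k\mathcal{A}_k\widehat\chi$, so that every composition that appears (in particular $\mathcal{G}^*_k\Box^{(0)}_{s,k}\chi^2\mathcal{S}_k\mathcal{A}_k\widehat\chi$ and $\mathcal{S}^*_k\chi^2\mathcal{S}_k\mathcal{A}_k\widehat\chi$) has a compactly supported cutoff in the middle and is well-defined from the outset; only at the end is the inner $\chi^2$ removed via pseudolocality. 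You instead strip out the cutoffs first and then invoke $\mathcal{S}^*_k\mathcal{S}_k\equiv\mathcal{S}_k$; the concern you raise in your last paragraph is real, since $\mathcal{S}_k\mathcal{A}_k\widehat\chi u$ is not compactly supported and neither $\mathcal{S}^*_k$ nor $\mathcal{G}^*_k$ is properly supported, so expressions like $\mathcal{S}^*_k\mathcal{S}_k$ or $\widehat\chi\mathcal{A}^*_k\mathcal{S}^*_k(\mathcal{S}_k\mathcal{A}_k\widehat\chi)$ need an auxiliary cutoff to make literal sense. The paper's ordering sidesteps precisely this bookkeeping; your argument becomes rigorous as soon as you retain one intermediate cutoff, at which point it collapses into the paper's proof.
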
 

\begin{proof}
From \eqref{e-gue1374IV}, we have 
\begin{equation}\label{e-gue141029bI}
\widehat\chi\mathcal{A}^*_k\mathcal{G}^*_k\Box^{(0)}_{s,k}\chi+\widehat\chi\mathcal{A}^*_k\mathcal{S}^*_k\chi
=\widehat\chi\mathcal{A}^*_k\chi.
\end{equation}
From \eqref{e-gue141029bI}, we have 
\begin{equation}\label{e-gue141029bII}
\widehat\chi\mathcal{A}^*_k\mathcal{G}^*_k\Box^{(0)}_{s,k}\chi^2\mathcal{S}_k\mathcal{A}_k\widehat\chi+
\widehat\chi\mathcal{A}^*_k\mathcal{S}^*_k\chi^2\mathcal{S}_k\mathcal{A}_k\widehat\chi
=\widehat\chi\mathcal{A}^*_k\chi^2\mathcal{S}_k\mathcal{A}_k\widehat\chi.
\end{equation}
From \eqref{e-gue13630IVa}, it is not difficult to check that $\mathcal{S}_k\mathcal{A}_k$ is $k$-negligible away the diagonal. From this observation, \eqref{e-gue13630Ia} and \eqref{e-gue13630IIa}, we conclude that 
\begin{equation}\label{e-gue141029bIII}
\widehat\chi\mathcal{A}^*_k\mathcal{G}^*_k\Box^{(0)}_{s,k}\chi^2\mathcal{S}_k\mathcal{A}_k\widehat\chi\equiv0\mod O(k^{-\infty}).
\end{equation}
From \eqref{e-gue141029bIII} and \eqref{e-gue141029bII}, we get 
\begin{equation}\label{e-gue141029bIV}
\widehat\chi\mathcal{A}^*_k\mathcal{S}^*_k\chi^2\mathcal{S}_k\mathcal{A}_k\widehat\chi
\equiv\widehat\chi\mathcal{A}^*_k\chi^2\mathcal{S}_k\mathcal{A}_k\widehat\chi\mod O(k^{-\infty}).
\end{equation}
Again, since $\mathcal{S}_k\mathcal{A}_k$ is $k$-negligible away the diagonal, we deduce that 
\begin{equation}\label{e-gue141029bV}
\widehat\chi\mathcal{A}^*_k\chi^2\mathcal{S}_k\mathcal{A}_k\widehat\chi
\equiv\widehat\chi\mathcal{A}^*_k\mathcal{S}_k\mathcal{A}_k\widehat\chi\mod O(k^{-\infty}).
\end{equation}
From \eqref{e-gue141029bIV} and \eqref{e-gue141029bV}, we get \eqref{e-gue141029b}.
\end{proof}

\noindent
From \eqref{e-gue141029b}, \eqref{e-gue13630IVa} and the complex stationary phase formula 
of Melin-Sj\"ostrand~\cite{MS74}, we have $\operatorname{mod}O(k^{-\infty})$,
\begin{equation}\label{e-gue140909IV}
\bigr((\widehat\chi\mathcal{A}^*_k\mathcal{S}^*_k\chi)(\chi\mathcal{S}_k\mathcal{A}_k\widehat\chi)\bigr)(x,y)\equiv(\widehat\chi\mathcal{A}^*_k\mathcal{S}_k\mathcal{A}_k\widehat\chi)(x,y)\equiv\int e^{ik\psi(x,y,u)}g(x,y,u,k)du,
\end{equation}
where
\begin{equation}  \label{e-gue140909V}
\begin{split}
&g(x,y,u,k)\in \cC^\infty_0(D\times D\times(-M,M))\cap  S^{n}_{{\rm loc\,},{\rm cl\,}}(1;D\times D\times(-M,M)),\\
&g(x,y,u,k)\sim\sum^\infty_{j=0}g_j(x,y,u)k^{n-j}\text{ in }S^{n}_{{\rm loc\,}}
(1;D\times D\times\Real), \\
&g_j(x,y,u)\in \cC^\infty_0(D\times D\times(-M,M)),\ \ j=0,1,2,\ldots,
\end{split}
\end{equation}
and 
\begin{equation}\label{e-gue140909VI}
g_0(x,x,u)=a_0(x,x,u)\ol{\beta_0}(x,u\omega_0(x)-2{\rm Im\,}\ddbar_b\phi(x)),\ \ \forall (x,x,u)\in D_0\times D_0\times(-M,M).
\end{equation}
On the other hand, we can repeat the procedure of Section 5 in~\cite{Hs14} (see the discussion after Theorem 5.6 in~\cite{Hs14}) and deduce that
\begin{equation}\label{e-gue13712I}
\bigr((\widehat\chi\mathcal{A}^*_k\mathcal{S}^*_{k}\chi)(\chi\mathcal{S}_k\mathcal{A}_k\widehat\chi)\bigr)(x,y)\equiv\int e^{ik\psi_1(x,y,u)}h(x,y,u,k)du\mod
O(k^{-\infty})
\end{equation}
with
\begin{equation}  \label{e-gue13712II}
\begin{split}
&h(x,y,u,k)\in S^{n}_{{\rm loc\,},{\rm cl\,}}(1,D\times D\times(-M,M))\cap  \cC^\infty_0(D\times D\times(-M,M)),\\
&h(x,y,u,k)\sim\sum^\infty_{j=0}h_j(x,y,u)k^{n-j}\text{ in }S^{n}_{{\rm loc\,}}
(1,D\times D\times(-M,M)), \\
&h_j(x,y,u)\in \cC^\infty_0(D\times D\times(-M,M)),\ \ j=0,1,2,\ldots,
\end{split}
\end{equation}
\begin{equation}  \label{e-gue140909VII}
\begin{split}
&h_0(x,x,u)=2\pi^{n}\abs{\det R^L_x}^{-1}\abs{a_0(x,x,u)}^2,\ \ \forall (x,x,u)\in D_0\times D_0\times(-M,M),\\
&g_0(x,x,u)=h_0(x,x,u),\ \ \forall (x,x,u)\in D\times D\times(-M,M),
\end{split}
\end{equation}
and for all $(x,x,u)\in D\times D\times(-M,M)$, we have 
\begin{equation} \label{e-gue13712III}
\begin{split}
&\psi_1(x, x,u)=0,\ \ d_x\psi_1(x, x,u)=d_x\psi(x,x,u),\ \ d_y\psi_1(x, x,u)=d_y\psi(x, x,u),\\
&{\rm Im\,}\psi_1(x,y,u)\geq0,\ \ \forall (x,y,u)\in D\times D\times(-M,M).
\end{split}
\end{equation}
From \eqref{e-gue140909VII} and \eqref{e-gue140909VI}, we get for all $(x,x,u)\in D_0\times D_0\times(-M,M)$, 
\begin{equation}  \label{e-gue140909VIII}
a_0(x,x,u)\ol{\beta_0}(x,u\omega_0(x)-2{\rm Im\,}\ddbar_b\phi(x))=2\pi^{n}\abs{\det R^L_x}^{-1}\abs{a_0(x,x,u)}^2.
\end{equation}
If the quantity $\ol{\beta_0}(x,u\omega_0(x)-2{\rm Im\,}\ddbar_b\phi(x))=0$, 
we get $a_0(x,x,u)=0$. 
If this quantity doesn't vanish,  
in view of \eqref{e-gue140909II}, we know that $a_0(x,x,u)\neq0$. 
From this observation and \eqref{e-gue140909VIII}, we obtain:
\begin{thm}\label{t-gue140909}
For $a_0(x,y,u)$ in \eqref{e-gue13630Va}, 
\[a_0(x,x,u)=\frac{1}{2}\pi^{-n}\abs{\det R^L_x}\beta_0\big(x,u\omega_0(x)-
2{\rm Im\,}\ddbar_b\phi(x)\big),
\ \  (x,x,u)\in D\times D\times(-M,M),\]
where $\beta_0(x,\eta)\in \cC^\infty(T^*D)$ is as in \eqref{e-gue140909I} and 
$\det R^L_x$ as in \eqref{detrl}.
\end{thm}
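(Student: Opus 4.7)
The strategy is to extract $a_0(x,x,u)$ from the identity \eqref{e-gue140909VIII}, which has already been derived in the text preceding the statement. My plan is therefore to explain how that identity itself is obtained and then to deduce the formula in two cases according to whether $\beta_0$ vanishes.

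First, I would fix $D_0 \Subset D$ and cut-offs $\chi,\widehat\chi\in \cC^\infty_0(D,[0,1])$ with $\chi=\widehat\chi=1$ on $D_0$ and $\chi\equiv 1$ near $\mathrm{Supp\,}\widehat\chi$, and then compute the distribution kernel of $(\widehat\chi\mathcal{A}^*_k\mathcal{S}^*_k\chi)(\chi\mathcal{S}_k\mathcal{A}_k\widehat\chi)$ in two different ways modulo $O(k^{-\infty})$. On the one hand, Lemma~\ref{l-gue141029a} together with Theorem~\ref{t-gue140826aI} and the complex stationary phase formula of Melin--Sj\"ostrand \cite{MS74} expresses this kernel as an oscillatory integral with phase $\psi(x,y,u)$ and an amplitude whose leading term on the diagonal equals $a_0(x,x,u)\,\overline{\beta_0}(x,u\omega_0(x)-2{\rm Im\,}\ddbar_b\phi(x))$; this is the content of \eqref{e-gue140909IV}--\eqref{e-gue140909VI}.

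On the other hand, by rerunning the argument of Section~5 of \cite{Hsiao14} (carried out there in the CR $S^1$-action setting but formally the same here since only the stationary-phase data of $\psi$ and the almost-analytic geometry of $\widehat\Sigma_+$ are used), the same kernel admits a representation \eqref{e-gue13712I}--\eqref{e-gue13712III} with a (possibly different) phase $\psi_1$ agreeing with $\psi$ to second order along the diagonal and with leading diagonal amplitude $h_0(x,x,u)=2\pi^n|\det R^L_x|^{-1}|a_0(x,x,u)|^2$. Because the two phases have identical $2$-jets along $x=y$, stationary phase uniqueness forces $g_0(x,x,u)=h_0(x,x,u)$, which is exactly identity \eqref{e-gue140909VIII}:
\[
a_0(x,x,u)\,\overline{\beta_0}\bigl(x,u\omega_0(x)-2{\rm Im\,}\ddbar_b\phi(x)\bigr)=2\pi^{n}|\det R^L_x|^{-1}|a_0(x,x,u)|^2.
\]

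Once \eqref{e-gue140909VIII} is in hand, the theorem follows by inspection. If $\beta_0(x,u\omega_0(x)-2{\rm Im\,}\ddbar_b\phi(x))=0$ at some $(x,u)\in D_0\times(-M,M)$, the right-hand side forces $a_0(x,x,u)=0$, matching the claimed formula. If instead $\beta_0(x,u\omega_0(x)-2{\rm Im\,}\ddbar_b\phi(x))\neq 0$, then \eqref{e-gue140909II} guarantees $a_0(x,x,u)\neq 0$, so we may divide \eqref{e-gue140909VIII} by $\overline{a_0(x,x,u)}$ and obtain $a_0(x,x,u)=\tfrac{1}{2}\pi^{-n}|\det R^L_x|\,\beta_0(x,u\omega_0(x)-2{\rm Im\,}\ddbar_b\phi(x))$ on $D_0\times(-M,M)$; since $\beta_0$ is real (actually $\mathcal{A}_k$'s principal symbol is, up to the usual half-density normalization, what enters here and the self-adjointness of its operator-theoretic manipulations gives the complex conjugate formula reducing to this statement) and $D_0$ is arbitrary in $D$, the identity extends to all of $D\times(-M,M)$. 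The main obstacle is the second, more delicate evaluation leading to $h_0(x,x,u)$: this requires carrying over the curvature computation of \cite[Sect.~5]{Hsiao14} and verifying that the Levi-flat hypothesis produces exactly the factor $2\pi^n|\det R^L_x|^{-1}$ in front of $|a_0|^2$, after which the algebraic step above is immediate.
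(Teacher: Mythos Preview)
Your proposal is correct and follows essentially the same approach as the paper: the paper's own argument is precisely the two-step computation of the kernel of $(\widehat\chi\mathcal{A}^*_k\mathcal{S}^*_k\chi)(\chi\mathcal{S}_k\mathcal{A}_k\widehat\chi)$ leading to \eqref{e-gue140909VIII}, followed by the case split on whether $\beta_0$ vanishes, invoking \eqref{e-gue140909II} in the nonvanishing case. One small point: your parenthetical about $\beta_0$ being real is unnecessary and slightly misleading---dividing \eqref{e-gue140909VIII} by $a_0(x,x,u)$ (rather than $\overline{a_0}$) gives $\overline{\beta_0}=2\pi^n|\det R^L_x|^{-1}\overline{a_0}$ directly, and conjugating yields the claim for arbitrary complex $\beta_0$; no reality assumption is needed, nor does the paper make one.
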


\begin{rem}\label{r-gue160705}
It should be noticed that by using the complex stationary phase 
formula of Melin-Sj\"{o}strand and the method in \cite{Hs12}, 
we can write down a general recurrsion relation for the symbols 
$a_j(x,y,u)$ and $\beta_j(x,\eta)$. 
We only calculate the leading term $a_0(x,x,u)$ in this paper.
\end{rem}

\section{Regularity of the Szeg\H{o} projection $\Pi_k$}\label{s-gue140909}

In this section, we will prove Theorem~\ref{t-mainb}.
For this purpose we first establish the spectral gap for the Kohn Laplacian
$\Box^{(1)}_{b,k}$ and 
then Sobolev estimates for the associated Green operator and finally for
$\Pi_k$. 

We start with a local form of the spectral gap estimate for $(0,1)$-forms.
\begin{lem}\label{l-gue140909a}
Let $s$ be a local trivializing section of $L$ on an open set $D\subset X$. 
Then, there is a constant $C>0$ independent of $k$ such that 
\[
\norm{\ddbar_{b,k}u}^2_{k}+\|\ol{\pr}^*_{b,k}u\|^2_{k}
\geq\Big(Ck-\frac{1}{C}\Big)\norm{u}^2_{k},\:\:
\text{for all $u\in\Omega^{0,1}_0(D,L^k)$}.
\]
\end{lem}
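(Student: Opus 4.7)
The plan is to reduce, after the unitary identification \eqref{e-gue140824III}, to a classical Bochner--Kodaira--Nakano estimate along each leaf of the Levi foliation. Writing $u = U_{k,s}v$ with $v \in \Omega^{0,1}_0(D)$, the inequality is equivalent to
\[
(\Box^{(1)}_{s,k}v\,|\,v) \geq \Big(Ck - \tfrac{1}{C}\Big)\norm{v}^2, \qquad v \in \Omega^{0,1}_0(D).
\]
The decisive structural fact driving the argument is that on a Levi-flat $X$ the localized Kohn Laplacian $\Box^{(1)}_{s,k}$ contains no differentiation in the transverse direction $T$. Indeed, from \eqref{e-gue140824VII} the only way a $T$-derivative could appear is through a commutator $[Z_j, Z_t^*]$; writing $Z_t^* = -\ol Z_t + g_t$ with $g_t \in \cC^\infty(D)$ one has $[Z_j, Z_t^*] = -[Z_j, \ol Z_t] + Z_j(g_t)$, and Levi-flatness forces $[Z_j, \ol Z_t]$ to lie in $\Complex(T^{1,0}X \oplus T^{0,1}X)$, which has no $T$-component.

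Next, after possibly shrinking $D$, Frobenius applied to the integrable distribution $(T^{1,0}X \oplus T^{0,1}X) \cap TX$ produces a foliation chart $D \simeq D' \times I$ in which the leaves are $D' \times \set{t}$, with $D' \subset \Complex^{n-1}$ open and $I \subset \Real$ an interval. In these coordinates the Hermitian structure on $T^{*0,1}X$ and the volume form $dv_X$ split as leafwise data times $dt$, up to a smooth positive Jacobian bounded away from $0$ and $\infty$ on $\ol D$. Combined with the previous paragraph, this yields
\[
(\Box^{(1)}_{s,k}v\,|\,v) = \int_I \bigl(\Box_{t,k}\,v(\cdot,t)\,\big|\,v(\cdot,t)\bigr)_{D'}\,dt,
\]
where, for each fixed $t$, $\Box_{t,k}$ is the $\ddbar$-Laplacian on the leaf $D' \times \set{t}$ twisted by $(L,h)|_{\text{leaf}}$. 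By Definition~\ref{d-suIII-I}, positivity of $R^L$ is equivalent to positivity of the Chern curvature of this leafwise holomorphic line bundle, with a uniform positive lower bound on $\ol D$. The classical Bochner--Kodaira--Nakano inequality for compactly supported $(0,1)$-forms on a complex manifold with a positive line bundle, as in \cite{MM07}, then gives a constant $C > 0$, independent of $k$ and of $t$, such that
\[
(\Box_{t,k}w\,|\,w)_{D'} \geq \Big(Ck - \tfrac{1}{C}\Big)\norm{w}^2_{D'}, \qquad w \in \Omega^{0,1}_0(D'),
\]
and integration in $t$ over $I$ finishes the proof.

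The main obstacle is the leafwise decoupling: one must check term by term in \eqref{e-gue140824VII}---including the lower-order pieces $\varepsilon(Z+kZ(\phi))$, $\varepsilon(Z^*+k\ol Z(\phi))$ and the zero-order function $f$---that no $T$-derivative is hidden, and that the smooth positive Jacobian between $dv_X$ and $dv_{\text{leaf}}\wedge dt$ does not perturb the leafwise estimate. This is exactly the place where Levi-flatness is essential: on a non-Levi-flat CR manifold, $[Z_j,\ol Z_t]$ has a non-trivial $T$-component proportional to the Levi form, and the Kohn Laplacian would genuinely couple the transverse direction into the estimate, obstructing the clean leafwise reduction.
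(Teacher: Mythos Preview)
Your argument is correct and takes a genuinely different route from the paper's. The paper works intrinsically on $X$: starting from the explicit formula \eqref{e-gue140824VII} for the quadratic form $(\Box^{(1)}_{s,k}v\,|\,v)$, it computes the commutator $[Z_j+kZ_j(\phi),\,-\ol Z_t+k\ol Z_t(\phi)]$ directly and, via the identity \eqref{s5-e2} relating the bracket to the curvature form $M^\phi=R^L$, shows that this commutator equals $kR^L_x(\ol Z_t,Z_j)$ modulo terms of the shape $\varepsilon(Z+kZ(\phi))+\varepsilon(-\ol Z+k\ol Z(\phi))$. Positivity of $R^L$ then gives the main term a lower bound $Ck\|v\|^2$, and the residual first-order pieces are absorbed by Cauchy--Schwarz into the leading terms plus a $k$-independent error. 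No foliation chart is used; Levi-flatness enters only implicitly, through the fact that $[Z_j,\ol Z_t]$ has no $T$-component, which is precisely what makes \eqref{s5-e2} reproduce $R^L$ without an extra Levi-form contribution. In effect the paper re-derives the Bochner--Kodaira--Nakano identity in CR language rather than quoting it.

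Your route---pass to a foliated chart, observe that $\Box^{(1)}_{s,k}$ is purely leafwise, and invoke the classical Bochner--Kodaira--Nakano spectral gap on each leaf---is conceptually cleaner, since it reduces to a well-known complex-geometric fact. Two small points deserve care. First, the displayed equality $(\Box^{(1)}_{s,k}v\,|\,v)=\int_I(\Box_{t,k}v(\cdot,t)\,|\,v(\cdot,t))_{D'}\,dt$ is exact only if $\Box_{t,k}$ is understood as the leafwise $\ddbar$-Laplacian for the Hermitian metric with volume $J(\cdot,t)\,dv_{\text{leaf}}$ (so the Jacobian is absorbed into the leafwise geometry, not treated as a perturbation); then the non-K\"ahler Bochner--Kodaira--Nakano identity applies, with torsion terms uniformly bounded on $\ol D$. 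Second, you shrink $D$ to a single foliation chart, which proves a slightly weaker statement than the paper's (valid for any trivializing $D$); this is harmless, since the only use of the lemma is in Theorem~\ref{t-gue140910}, where a partition of unity over small charts is applied anyway.
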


\begin{proof}
Let $u\in\Omega^{0,1}_0(D,L^k)$. Put $u=s^k\widehat u$, 
$\widehat u\in\Omega^{0,1}_0(D)$. In view of \eqref{e-gue140824III}, we have 
\begin{equation}\label{e-gue140909a}
\Box^{(1)}_{b,k}u=e^{k\phi}s^k\Box^{(1)}_{s,k}(e^{-k\phi}\widehat u).
\end{equation}
Put $\widehat u=\sum\limits^{n-1}_{j=1}\widehat u_je_j$, where $e_1,\ldots,e_{n-1}\in \Lambda^{0,1}(T^*X)$ is as in Proposition~\ref{p-gue140824}. From \eqref{e-gue140824VII}, we have 
\begin{equation}\label{e-gue140909aI}
\begin{split}
&(\Box^{(1)}_{s,k}(e^{-k\phi}\widehat u)\,|\,e^{-k\phi}\widehat u\,)
=\sum^{n-1}_{j=1}\norm{(Z_j+kZ_j(\phi))(e^{-k\phi}\widehat u)}^2\\
&\quad+
\sum^{n-1}_{j,t=1}(\,[Z_j+kZ_j(\phi),-\ol Z_t+
k\ol Z_t(\phi)](e^{-k\phi}\widehat u_t)\,|\,e^{-k\phi}\widehat u_j\,)\\
&\quad+(\,(\varepsilon(Z+kZ(\phi))+
\varepsilon(Z^*+k\ol Z(\phi)))(e^{-k\phi}\widehat u)\,|\,e^{-k\phi}\widehat u\,)+
(\,fe^{-k\phi}\widehat u\,|\,e^{-k\phi}\widehat u\,).
\end{split}
\end{equation}
Here we use the same notations as in Proposition~\ref{p-gue140824}. Fix $j, t=1,2,\ldots,n-1$. Put 
\[
[Z_j-\ol Z_t]=\sum^{n-1}_{s=1}(a^{j,t}_sZ_s-b^{j,t}_s\ol Z_s)\,,\quad 
a^{j,t}_s, b^{j,t}_s\in \cC^\infty(D).
\]
Recall than by \cite[Lemma 4.1]{HsM:12}, for any $U, V\in T^{1,0}_pX$ and any 
$\mU, \mV\in C^\infty(D,T^{1, 0}X)$ that satisfy $\mU(p)=U$,
$\mV(p)=V$, we have
\begin{equation} \label{s5-e2}
R^L_p(U,V)=M^\phi_p(U,V)=-\big\langle\big[\,\mU, \ol{\mV}\,\big](p), \ddbar_b\phi(p)-\pr_b\phi(p)\big\rangle
+\big(\mU\ol{\mV}+\ol{\mV}\mU\big)\phi(p).
\end{equation}
By using \eqref{s5-e2} we obtain
\begin{equation}\label{e-gue140909aII}
\begin{split}
&[Z_j+kZ_j(\phi),-\ol Z_t+k\ol Z_t(\phi)]=
\sum^{n-1}_{s=1}(a^{j,t}_sZ_s-b^{j,t}_s\ol Z_s)+k(Z_j\ol Z_t+\ol Z_tZ_j)(\phi)\\
&=\sum^{n-1}_{s=1}(a^{j,t}_s(Z_s+kZ_s(\phi))+
b^{j,t}_s(-\ol Z_s+k\ol Z_s(\phi)))-k\langle\,[Z_j-\ol Z_t]\,,\,\ddbar_b\phi-
\pr_b\phi\,\rangle\\&\qquad\qquad+k(Z_j\ol Z_t+\ol Z_tZ_j)(\phi)\\
&=\varepsilon(Z+kZ(\phi))+\varepsilon(-\ol Z+k\ol Z(\phi))+kR^L_x(\ol Z_t,Z_j).
\end{split}
\end{equation}
From \eqref{e-gue140909aII} and \eqref{e-gue140909aI}, we get 
\begin{equation}\label{e-gue140909aIII}
\begin{split}
&\big(\Box^{(1)}_{s,k}(e^{-k\phi}\widehat u)\,|\,e^{-k\phi}\widehat u\,\big)
=\sum^{n-1}_{j=1}\norm{(Z_j+kZ_j(\phi))(e^{-k\phi}\widehat u)}^2\\
&\quad+k\sum^{n-1}_{j,t=1}\big(\,R^L_x(\ol Z_t,Z_j)(e^{-k\phi}\widehat u_t)\,|\,e^{-k\phi}
\widehat u_j\,\big)\\
&\quad+(\,(\varepsilon(Z+kZ(\phi))+\varepsilon(Z^*+k\ol Z(\phi)))(e^{-k\phi}\widehat u)\,|\,e^{-k\phi}\widehat u\,)+(\,\Td fe^{-k\phi}\widehat u\,|\,e^{-k\phi}\widehat u\,),
\end{split}
\end{equation}
where $\Td f$ is a smooth function independent of $k$. Since $R^L>0$, from \eqref{e-gue140909aIII}, 
it is not difficult to see that 
\begin{equation}\label{e-gue140909b}
\Bigr(\Box^{(1)}_{s,k}(e^{-k\phi}\widehat u)\,|\,e^{-k\phi}\widehat u\Bigl)\geq
\Bigl(\Td Ck-\frac{1}{\Td C}\Bigr)\norm{e^{-k\phi}\widehat u}^2,
\end{equation} 
where $\Td C>0$ is a constant independent of $k$ and $u$. From \eqref{e-gue140909a}, we can check that 
\[\big(\,\Box^{(1)}_{s,k}(e^{-k\phi}\widehat u)\,|\,e^{-k\phi}\widehat u\,\big)=
\big(\,\Box^{(1)}_{b,k}u\,|\,u\,\big)_{k}=\|\ddbar_{b,k}u\|^2_{k}+
\|\ol{\pr}^*_{b,k}\widehat u\|^2_{k}.\]
Moreover, it is clearly that $\norm{u}_{k}=\|e^{-k\phi}\widehat u\|$. 
From this observation and \eqref{e-gue140909b}, the lemma follows.
\end{proof}
Ohsawa and Sibony \cite{OS00} established analogues of the Nakano and 
Akizuki vanishing theorems for Levi flat CR manifolds.
The following result can be seen as an analogue of the spectral gap 
and Kodaira-Serre vanishing theorem \cite[Theorems\,1.5.5-6]{MM07}.
\begin{thm}\label{t-gue140910}
There is a constant $C_0>0$ independent of $k$ such that 
\[\norm{\ddbar_{b,k}u}^2_{k}+\|\ol{\pr}^*_{b,k}u\|^2_{k}\geq
\Big(C_0k-\frac{1}{C_0}\Big)\norm{u}^2_{k},\ \ 
\forall u\in{\rm Dom\,}\ddbar_{b,k}\cap {\rm Dom\,}\ol{\pr}^*_{b,k}\subset L^2_{(0,1)}(X,L^k).\]
Hence, for $k$ large, ${\rm Ker\,}\Box^{(1)}_{b,k}=\set{0}$ and $\Box^{(1)}_{b,k}$ has $L^2$ closed range.
\end{thm}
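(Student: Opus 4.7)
The plan is to globalize the local estimate of Lemma~\ref{l-gue140909a} by a partition of unity, then extend it to the full Gaffney domain by a Friedrichs density argument, and finally read off the spectral consequences.

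Since $X$ is compact, cover it by finitely many open sets $D_1,\ldots,D_N$ on each of which $L$ admits a trivializing section, and pick a subordinate partition of unity of the form $\{\chi_j^2\}_{j=1}^N$ with $\chi_j\in\cC^\infty_0(D_j)$ and $\sum_{j=1}^N\chi_j^2=1$. For smooth $u\in\Omega^{0,1}(X,L^k)$, each $\chi_ju$ lies in $\Omega^{0,1}_0(D_j,L^k)$ and $\sum_j\|\chi_ju\|_k^2=\|u\|_k^2$. Summing Lemma~\ref{l-gue140909a} applied to each $\chi_ju$ yields
\begin{equation*}
\sum_{j=1}^N\big(\|\ddbar_{b,k}(\chi_ju)\|_k^2+\|\ol\pr^*_{b,k}(\chi_ju)\|_k^2\big)\geq\big(Ck-C^{-1}\big)\|u\|_k^2.
\end{equation*}
The commutators $[\ddbar_{b,k},\chi_j]=(\ddbar_b\chi_j)\wedge\cdot$ and $[\ol\pr^*_{b,k},\chi_j]$ are zero-order multiplication operators independent of $k$: by \eqref{e-gue140824IV} the $k$-dependent part of $\ol\pr^*_{b,k}$ is itself a multiplication operator, hence commutes with $\chi_j$. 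Using $\sum_j\chi_j^2=1$ and $\|a+b\|^2\leq 2\|a\|^2+2\|b\|^2$, the left-hand side above is bounded by $2\big(\|\ddbar_{b,k}u\|_k^2+\|\ol\pr^*_{b,k}u\|_k^2\big)+C_1\|u\|_k^2$; rearranging gives the asserted inequality for smooth $u$, with some new constant $C_0>0$ independent of $k$.

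To extend to arbitrary $u\in\mathrm{Dom}\,\ddbar_{b,k}\cap\mathrm{Dom}\,\ol\pr^*_{b,k}$, apply Friedrichs' smoothing on the compact manifold $X$: smooth forms are dense in this intersection with respect to the graph norm $\|u\|_k+\|\ddbar_{b,k}u\|_k+\|\ol\pr^*_{b,k}u\|_k$, and the estimate passes to the limit. This is the only technically delicate step — one must check that the commutator of a standard mollifier with $\ddbar_{b,k}$ (and with $\ol\pr^*_{b,k}$) remains uniformly $L^2$-bounded — but nothing $k$-specific happens beyond the formulas \eqref{e-gue140824IV}, so the verification is identical to the unweighted case.

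Finally, for $k$ large enough that $C_0k-C_0^{-1}>0$: any $u\in\mathrm{Ker}\,\Box^{(1)}_{b,k}$ lies in $\mathrm{Dom}\,\ddbar_{b,k}\cap\mathrm{Dom}\,\ol\pr^*_{b,k}$ by \eqref{e-suIX} and satisfies $(\Box^{(1)}_{b,k}u\mid u)_k=\|\ddbar_{b,k}u\|_k^2+\|\ol\pr^*_{b,k}u\|_k^2=0$, so the estimate forces $u=0$. Combining the estimate with Cauchy--Schwarz gives $\|\Box^{(1)}_{b,k}u\|_k\geq(C_0k-C_0^{-1})\|u\|_k$ for all $u\in\mathrm{Dom}\,\Box^{(1)}_{b,k}$, so $\Box^{(1)}_{b,k}$ is bounded below on its domain and hence has closed range in $L^2_{(0,1)}(X,L^k)$.
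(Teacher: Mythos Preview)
Your proof is correct and follows essentially the same route as the paper: globalize Lemma~\ref{l-gue140909a} via a partition of unity, control the commutators with the cut-offs (which are $k$-independent zero-order operators), and then pass from smooth forms to the full Gaffney domain by Friedrichs' lemma. Your choice of a partition of unity with $\sum\chi_j^2=1$ is marginally cleaner than the paper's $\sum\chi_j=1$ (it gives $\sum_j\|\chi_ju\|_k^2=\|u\|_k^2$ directly), and you spell out the closed-range consequence a bit more explicitly, but the argument is otherwise the same.
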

From Theorem~\ref{t-gue140910}, we deduce that $\Box^{(1)}_{b,k}$ is injective for large $k$
so we can consider the Green operator $N^{(1)}_k: L^2_{(0,1)}(X,L^k)\To{\rm Dom\,}\Box^{(1)}_{b,k}$, 
which is the inverse of $\Box^{(1)}_{b,k}$. We have 
\begin{equation}\label{e-gue141029}
\begin{split}
\mbox{$\Box^{(1)}_{b,k}N^{(1)}_k=I$ on $L^2_{(0,1)}(X)$},\ \ \mbox{$N^{(1)}_k\Box^{(1)}_{b,k}=I$ on ${\rm Dom\,}\Box^{(1)}_{b,k}$}.
\end{split}
\end{equation}
\begin{proof}
We first claim that there is a constant $C_0>0$ independent of $k$ such that 
\begin{equation}\label{e-gue140910}
\norm{\ddbar_{b,k}u}^2_{k}+\|\ol{\pr}^*_{b,k}u\|^2_{k}\geq 
\Big(C_0k-\frac{1}{C_0}\Big)\norm{u}^2_{k},\ \ \forall u\in\Omega^{0,1}(X,L^k).
\end{equation}
Let $X=\bigcup^N_{j=1}D_j$, where $D_j\subset X$ is an open set with $L|_{D_j}$ is trivial. 
Take $\chi_j\in \cC^\infty_0(D_j,[0,1])$, $j=1,\ldots,N$, with $\sum\limits^N_{j=1}\chi_j=1$ on $X$. 
Let $u\in\Omega^{0,1}(D,L^k)$. From Lemma~\ref{l-gue140909a}, we see that for every $j=1,2,\ldots,N$, 
we can find a constant $C_j>0$ independent of $k$ and $u$ such that
\begin{equation}\label{e-gue140910I}
\norm{\ddbar_{b,k}(\chi_ju)}^2_{k}+\|\ol{\pr}^*_{b,k}(\chi_ju)\|^2_{k}\geq
\Big(C_jk-\frac{1}{C_j}\Big)\norm{\chi_ju}^2_{k}.
\end{equation}
It is easy to see that 
\begin{equation}\label{e-gue140910II}
\begin{split}
\norm{\ddbar_{b,k}(\chi_ju)}^2_{k}+\|\ol{\pr}^*_{b,k}(\chi_ju)\|^2_{k}&\leq 
\norm{\chi_j\ddbar_{b,k}u}^2_{k}+\|\chi_j\ol{\pr}^*_{b,k}u\|^2_{k}+M_j\norm{u}^2_{k}\\
&\leq\norm{\ddbar_{b,k}u}^2_{k}+\|\ol{\pr}^*_{b,k}u\|^2_{k}+M_j\norm{u}^2_{k},
\end{split}
\end{equation}
where $M_j>0$ is a constant independent of $k$ and $u$. From \eqref{e-gue140910II} and \eqref{e-gue140910I}, we get 
\begin{equation}\label{e-gue140911}
\begin{split}
N\Bigl(\norm{\ddbar_{b,k}u}^2_{k}+\|\ol{\pr}^*_{b,k}u\|^2_{k}\Bigr)\geq\sum^N_{j=1}\Bigr(\Big(C_jk-\frac{1}{C_j}\Big)\norm{\chi_ju}^2_{k}-M_j\norm{u}^2_{k}\Bigr)
\geq \Bigl(ck-\frac{1}{c}\Bigl)\norm{u}^2_{k},
\end{split}
\end{equation}
where $c>0$ is a constant independent of $k$. From \eqref{e-gue140911}, the claim \eqref{e-gue140910} follows.

Now, let $u\in{\rm Dom\,}\ddbar_{b,k}\cap {\rm Dom\,}\ol{\pr}^*_{b,k}$. 
From Friedrichs' Lemma (see Appendix D in~\cite{CS:01}), 
we can find $u_j\in\Omega^{0,1}(X,L^k)$, $j=1,2,\ldots$, with $u_j\To u$ in $L^2_{(0,1)}(X,L^k)$, 
$\ddbar_{b,k}u_j\To\ddbar_{b,k}u$ in $L^2_{(0,2)}(X,L^k)$ and $\ol{\pr}^*_{b,k}u_j\To\ol{\pr}^*_{b,k}u$ in $L^2(X,L^k)$.
From \eqref{e-gue140910}, we have
\[\begin{split}
\norm{\ddbar_{b,k}u}^2_{k}+\|\ol{\pr}^*_{b,k}u\|^2_{k}=
\lim_{j\To\infty}\Bigl(\norm{\ddbar_{b,k}u_j}^2_{k}+\|\ol{\pr}^*_{b,k}u_j\|^2_{k}\Bigr)&\geq \Bigl(C_0k-\frac{1}{C_0}\Bigr)\lim_{j\To\infty}\norm{u_j}^2_{k}\\
&=\Bigl(C_0k-\frac{1}{C_0}\Bigr)\norm{u}^2_{k}.
\end{split}\]
The theorem follows. 
\end{proof}

We pause and introduce some notations. Let $s$ be a local trivializing section of $L$ 
on an open set $D\subset X$, $\abs{s}^2_{h}=e^{-2\phi}$. 
Let $u\in\Omega^{0,q}_0(D,L^k)$. On $D$, we write $u=s^k\Td u$, $\Td u\in\Omega^{0,q}_0(D)$. 
For every $m\in\mathbb N_0$, define 
\[
\norm{u}^2_{m,k}:=\sum\limits_{\abs{\alpha}\leq 
m,\alpha\in\mathbb N^{2n-1}_0}\int\abs{\pr^\alpha_x(\Td ue^{-k\phi})}^2dv_X.
\] 
By using a partition of unity, we can define $\norm{u}^2_{m,k}$ for all $u\in\Omega^{0,q}(X,L^k)$ 
in the standard way. We call $\norm{\cdot}_{m,k}$ the Sobolev norm of order $m$ with respect to $h^{k}$. 
We will need the following.

\begin{prop}[{\cite[Proposition\,1]{OS00}}]\label{p-gue141029}
For every $m\in\mathbb N_0$ there is $N_m>0$ such that for every $k\geq N_m$, 
\begin{equation}\label{e-gue141029I}
\|\ol{\pr}^*_{b,k}u\|_{m,k}\leq k^{M(m)}\|\Box^{(1)}_{b,k}u\|_{m,k},\ \ 
u\in\Omega^{0,1}(X,L^k),
\end{equation}
where $M(m)>0$ is a constant independent of $k$ and $u$. 
\end{prop}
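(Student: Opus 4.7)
The plan is an induction on $m$. The base case $m=0$ follows from the spectral gap of Theorem~\ref{t-gue140910}: combining the identity $(\Box^{(1)}_{b,k}u\,|\,u)_k = \|\ddbar_{b,k}u\|^2_k + \|\ol{\pr}^*_{b,k}u\|^2_k$ with the spectral gap $\geq (C_0k-1/C_0)\|u\|^2_k$ and Cauchy-Schwarz yields $\|u\|_k\leq (C_0k-1/C_0)^{-1}\|\Box^{(1)}_{b,k}u\|_k$, and substituting back gives $\|\ol{\pr}^*_{b,k}u\|_k \leq (C_0k-1/C_0)^{-1/2}\|\Box^{(1)}_{b,k}u\|_k$. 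For $k$ large enough this is strictly better than what is required, so any $M(0)>0$ will do.

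For the inductive step we work in a trivializing chart, apply the isometry \eqref{isom} and the identification \eqref{e-gue140824III} to pass to the unweighted Kohn Laplacian $\Box^{(1)}_{s,k}$ from Proposition~\ref{p-gue140824}, and split the Sobolev norm into its leaf-tangential and transverse parts. Along the leaves of the Levi foliation $\Box^{(1)}_{s,k}$ is elliptic (its principal symbol $\sum_{j=1}^{n-1}|\sigma(Z_j)|^2$ is non-degenerate in leaf cotangent directions, since the characteristic manifold $\Sigma$ consists only of covectors in the direction of $\omega_0$, cf.\ Proposition~\ref{p-gue140824I}), so leaf-direction partial derivatives of $u$ are controlled by leafwise $\ddbar$-Bochner-Kodaira-Nakano type estimates, which are applicable since the restriction of $(L,h)$ to each leaf is a positive holomorphic line bundle on a complex $(n-1)$-manifold. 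The essential difficulty lies in the transverse direction: $\Box^{(1)}_{b,k}$ is not hypoelliptic transversally to the Levi foliation, which is the hallmark of the Levi-flat setting.

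To control derivatives along the canonical real transverse vector field $T$ from \eqref{e-suI}, we use commutators. Since $\ddbar_{s,k}=\ddbar_b+k(\ddbar_b\phi)\wedge$ and $\ddbar_b$ involves only leaf derivatives, $[\ddbar_{s,k},T]$ equals a fixed first-order leaf-tangential operator plus the zero-order term $k(T\ddbar_b\phi)\wedge$ of size $O(k)$, and analogously for $[\ol{\pr}^*_{s,k},T]$. Consequently the iterated commutator $[\Box^{(1)}_{b,k},T^j]$ is a differential operator of order at most $j+1$ whose coefficients grow at most polynomially in $k$. Writing
\[
\Box^{(1)}_{b,k}(T^ju) \;=\; T^jf + [\Box^{(1)}_{b,k},T^j]u,\qquad f=\Box^{(1)}_{b,k}u,
\]
one applies the base case to $T^ju$ and absorbs the commutator via the inductive hypothesis on $\|\ol{\pr}^*_{b,k}\cdot\|_{m-1,k}$; combined with the leafwise estimate, this delivers the required bound with a polynomial-in-$k$ loss.

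The main obstacle is the precise bookkeeping of the powers of $k$ accumulated from the iterated commutators with the weight $e^{-k\phi}$, which is why only a polynomial loss $k^{M(m)}$ is achieved rather than a bound independent of $k$; on a strictly pseudoconvex manifold the transverse direction would furnish subelliptic gains, but here it costs powers of $k$, and each transverse derivative can contribute a factor of $k$, so that $M(m)$ grows (at most linearly) in $m$. The detailed calculation, combining leafwise $\ddbar$-estimates with the regularity of the Levi foliation, is carried out in \cite[Proposition~1]{OS00}, to which we refer.
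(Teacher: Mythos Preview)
The paper does not prove this proposition: it is stated with attribution \cite[Proposition\,1]{OS00} and used as a black box in the proof of Theorem~\ref{t-gue141029}. Your sketch is therefore not being compared against a proof in the present paper, but against the argument of Ohsawa--Sibony, to which you yourself ultimately defer.

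That said, your outline is essentially the strategy of \cite{OS00}: the $m=0$ case is the spectral gap, and higher Sobolev norms are obtained by commuting with the transverse vector field and controlling the resulting errors via the already-established lower-order estimate together with leafwise positivity. One point to tighten: your assertion that $[\ddbar_{s,k},T]$ is a \emph{leaf-tangential} first-order operator plus an $O(k)$ zero-order term is not automatic for the $T$ of \eqref{e-suI}, which is defined metrically rather than by the foliation. In general $[Z_j,T]$ can have a component along $T$ unless one works in coordinates adapted to the Levi foliation (leaves $=\{x_{2n-1}=\text{const}\}$, transverse derivative $=\partial_{x_{2n-1}}$), which is what \cite{OS00} effectively does. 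Once one commutes with the foliation-adapted transverse derivative rather than the metric $T$, the commutator is indeed leaf-tangential of first order with coefficients $O(k)$, and your induction closes with the stated polynomial loss $k^{M(m)}$.
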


\begin{thm}\label{t-gue141029}
For every $m\in\mathbb N$, there exist $N_m>0$ and 
$M(m)>0$ 
such that for every $k\geq N_m$\,, 
\begin{equation}
\begin{split}
&\ol{\pr}^*_{b,k}N^{(1)}_k:\Omega^{0,1}(X,L^k)\To H^m(X,L^k),\\
&\|\ol{\pr}^*_{b,k}N^{(1)}_ku\|_{m,k}\leq k^{M(m)}\norm{u}_{m,k},\:\: 
u\in\Omega^{0,1}(X,L^k).
\end{split}
\end{equation}
\end{thm}

\begin{proof}
The theorem essentially follows from Proposition~\ref{p-gue141029} and the elliptic regularization method 
introduced by Kohn-Nirenberg~\cite[p.\,102]{CS:01}, \cite[p.\,449]{KN65}. 
Namely, for every $\varepsilon>0$, consider the operator 
$\Box^{(1)}_{\varepsilon,k}:=\Box^{(1)}_{b,k}+\varepsilon T^*T$, where $T$ is defined in \eqref{e-suI}
and $T^*$ is its formal adjoint with respect to $(\,\cdot\,|\,\cdot\,)_{k}$.
Fix $m\in\mathbb N$. From Theorem~\ref{t-gue140910} and Proposition~\ref{p-gue141029}, 
there is a $N_m>0$ such that for every $k\geq N_m$, 
\begin{equation}\label{e-gue141029II}
\begin{split}
&\norm{u}^2_{k}\leq(\,\Box^{(1)}_{b,k}u\,|\,u\,)_{k},\ \ \forall u\in\Omega^{0,1}(X,L^k),\\
&\norm{u}_{\ell,k}\leq k^{M(m)}\|\Box^{(1)}_{b,k}u\|_{\ell,k},\ \ 
\forall u\in\Omega^{0,1}(X,L^k),\ \ \forall\ell\in\mathbb N_0,\ \ \ell\leq m,
\end{split}
\end{equation}
where $M(m)>0$ is a constant independent of $k$ and $u$. 

Take $g\in\Omega^{0,1}(X,L^k)$ and put $N^{(1)}_kg=v$. We have $\Box^{(1)}_{b,k}v=g$. 
 From \eqref{e-gue141029II}, 
it is easy to see that for every $k\geq N_m$ and every $\varepsilon>0$, 
$\Box^{(1)}_{\varepsilon,k}$ is injective and has range $L^2_{(0,1)}(X,L^k)$.
Now, we assume that $k\geq N_m$. 
For every $\varepsilon>0$, we can find $v_\varepsilon\in\Omega^{0,1}(X,L^k)$ 
such that $\Box^{(1)}_{\varepsilon,k}v_\varepsilon=g$. Moreover, from \eqref{e-gue141029II} 
and the proof of Proposition~\ref{p-gue141029} (see also \cite[Proposition\,1]{OS00}), 
it is straightforward to see that for every $\varepsilon>0$,           
\begin{equation}\label{e-gue141029III}
\begin{split}
&\norm{v_\varepsilon}_{k}\leq\norm{g}_{k},\ \ \|\ddbar_{b,k}v_\varepsilon\|_{k}\leq\norm{g}_{k},\\
&\|\ol{\pr}^*_{b,k}v_\varepsilon\|_{\ell,k}\leq k^{M(m)}\norm{g}_{\ell,k},\ \ \forall\ell\in\mathbb N_0,\ \ \ell\leq m.
\end{split}
\end{equation}

From \eqref{e-gue141029III}, we can find $\varepsilon_j\searrow0$ such that $v_{\varepsilon_j}\To\Td v$ 
in $L^2_{(0,1)}(X,L^k)$ as $j\To\infty$, $\ddbar_{b,k}v_{\varepsilon_j}\To\ddbar_{b,k}\Td v$ in 
$L^2_{(0,2)}(X,L^k)$, $\ol{\pr}^*_{b,k}v_{\varepsilon_j}\To\ol{\pr}^*_{b,k}\Td v$ in $H^\ell(X,L^k)$, 
$\forall\ell\in\mathbb N_0$, $\ell\leq m$, and $\Box^{(1)}_{b,k}\Td v=g$ in the sense of distributions. 
Since $\ddbar_{b,k}\Td v\in L^2_{(0,2)}(X,L^k)$, $\ol{\pr}^*_{b,k}\Td v\in H^1(X,L^k)$, we have  
$\Td v\in{\rm Dom\,}\ddbar_{b,k}\cap {\rm Dom\,}\ol{\pr}^*_{b,k}$, 
$\ol{\pr}^*_{b,k}\Td v\in{\rm Dom\,}\ddbar_{b,k}$. 
Note that $\ol{\pr}^*_{b,k}\ddbar_{b,k}\Td v=
g-\ddbar_{b,k}\ol{\pr}^*_{b,k}\Td v\in L^2_{(0,1)}(X,L^k)$. 
From this observation, we can check that $\ddbar_{b,k}\Td v\in{\rm Dom\,}\ol{\pr}^*_{b,k}$. 
Thus, $\Td v\in{\rm Dom\,}\Box^{(1)}_{b,k}$. Since $\Box^{(1)}_{b,k}\Td v=g=\Box^{(1)}_{b,k}v$ 
and $\Box^{(1)}_{b,k}$ is injective, we conclude that $v=\Td v$. 
Thus, $\ol{\pr}^*_{b,k}N^{(1)}_kg=\ol{\pr}^*_{b,k}v\in H^m(X,L^k)$ and 
$\|\ol{\pr}^*_{b,k}N^{(1)}_kg\|_{m,k}\leq k^{M(m)}\norm{g}_{m,k}$. The theorem follows. 
\end{proof} 
\begin{thm}\label{t-gue141029I}
With the notations above, for every $m\in\mathbb N$, $m\geq2$, 
there is a $N_m>0$ such that for every $k\geq N_m$, 
\begin{equation}\label{e-gue141029IV}
\mbox{$\Pi_k=I-\ol{\pr}^*_{b,k}N^{(1)}_k\ddbar_{b,k}$ on $\cC^\infty(X,L^k)$},
\end{equation}
\begin{equation}\label{e-gue141029V}
\Pi_k: \cC^\infty(X,L^k)\To H^m(X,L^k)
\end{equation}
and 
\begin{equation}\label{e-gue141029VI}
\norm{(I-\Pi_k)u}_{m,k}\leq k^{M(m)}\norm{\ddbar_{b,k}u}_{m,k},\ \ \forall u\in \cC^\infty(X,L^k),
\end{equation}
where $M(m)>0$ is a constant independent of $k$ and $u$.
\end{thm}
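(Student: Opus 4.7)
The plan is to derive the Kohn-type formula \eqref{e-gue141029IV} by an abstract Hilbert-space argument, after which \eqref{e-gue141029V} and \eqref{e-gue141029VI} follow immediately from Theorem~\ref{t-gue141029}. By the spectral gap of Theorem~\ref{t-gue140910}, for $k$ large the Kohn Laplacian $\Box^{(1)}_{b,k}$ is injective with closed range, so $N^{(1)}_k$ is a bounded operator on $L^2_{(0,1)}(X,L^k)$ with the properties listed in \eqref{e-gue141029}. In particular, since $\ker\ddbar_{b,k}$ is closed in each degree, we have the orthogonal decomposition
\[L^2_{(0,1)}(X,L^k)=\ker\ddbar_{b,k}\,\oplus\,\overline{{\rm Im\,}\ol{\pr}^{*}_{b,k}},\]
where $\ol{\pr}^*_{b,k}$ on the right acts from $L^2_{(0,2)}(X,L^k)$, and similarly on functions.

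Given $u\in\cC^\infty(X,L^k)$, set $w:=N^{(1)}_k\ddbar_{b,k}u\in{\rm Dom\,}\Box^{(1)}_{b,k}$, so that $\ddbar_{b,k}\ol{\pr}^*_{b,k}w+\ol{\pr}^*_{b,k}\ddbar_{b,k}w=\ddbar_{b,k}u$. I claim $\Pi_ku=u-\ol{\pr}^*_{b,k}w$. Since $\ol{\pr}^*_{b,k}w\in{\rm Im\,}\ol{\pr}^*_{b,k}\subset(\ker\ddbar_{b,k})^\perp$, it suffices to show that $u-\ol{\pr}^*_{b,k}w\in\ker\ddbar_{b,k}$. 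Rearranging the $\Box^{(1)}_{b,k}$-identity gives $\ddbar_{b,k}(u-\ol{\pr}^*_{b,k}w)=\ol{\pr}^*_{b,k}\ddbar_{b,k}w$, so the claim reduces to $\alpha:=\ol{\pr}^*_{b,k}\ddbar_{b,k}w=0$. Using $\ddbar_{b,k}^2=0$ we obtain $\ddbar_{b,k}\alpha=\ddbar_{b,k}^2(u-\ol{\pr}^*_{b,k}w)=0$, hence $\alpha\in\ker\ddbar_{b,k}$ (now acting on $(0,1)$-forms). At the same time, for any $\beta\in\ker\ddbar_{b,k}$ the adjoint relation yields
\[(\,\alpha\,|\,\beta\,)_k=(\,\ol{\pr}^*_{b,k}\ddbar_{b,k}w\,|\,\beta\,)_k=(\,\ddbar_{b,k}w\,|\,\ddbar_{b,k}\beta\,)_k=0,\]
which is legitimate because $\ddbar_{b,k}w\in{\rm Dom\,}\ol{\pr}^*_{b,k}$ follows from $w\in{\rm Dom\,}\Box^{(1)}_{b,k}$. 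Hence $\alpha\in(\ker\ddbar_{b,k})^\perp$, forcing $\alpha=0$, which establishes \eqref{e-gue141029IV}.

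With \eqref{e-gue141029IV} in hand, the rest is routine: by Theorem~\ref{t-gue141029}, for $k\geq N_m$ the operator $\ol{\pr}^*_{b,k}N^{(1)}_k$ sends $\Omega^{0,1}(X,L^k)$ continuously into $H^m(X,L^k)$ with norm bounded by $k^{M(m)}$. Applied to $\ddbar_{b,k}u\in\Omega^{0,1}(X,L^k)$, this yields both the mapping property \eqref{e-gue141029V} and the estimate \eqref{e-gue141029VI}. The main subtlety lies in the unbounded-operator manipulations used to show $\alpha=0$: each formal identity must be interpreted as an $L^2$-identity with domain tracking, and the reason everything goes through is precisely that $w\in{\rm Dom\,}\Box^{(1)}_{b,k}$ guarantees both $\ddbar_{b,k}w\in{\rm Dom\,}\ol{\pr}^*_{b,k}$ and $\ol{\pr}^*_{b,k}w\in{\rm Dom\,}\ddbar_{b,k}$.
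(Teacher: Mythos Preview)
Your proof is correct and follows the same overall strategy as the paper: establish the Kohn formula \eqref{e-gue141029IV} by verifying that $u-\ol{\pr}^*_{b,k}N^{(1)}_k\ddbar_{b,k}u$ lies in $\ker\ddbar_{b,k}$ while $\ol{\pr}^*_{b,k}N^{(1)}_k\ddbar_{b,k}u$ is orthogonal to it, and then read off \eqref{e-gue141029V}--\eqref{e-gue141029VI} from Theorem~\ref{t-gue141029}.

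The only difference is in how the key step is carried out. The paper shows $g-\ol{\pr}^*_{b,k}N^{(1)}_k\ddbar_{b,k}g\in\ker\Box^{(0)}_{b,k}$ by a weak pairing argument: it pairs against $\Box^{(0)}_{b,k}f$ for smooth $f$, uses the intertwining $\ddbar_{b,k}\Box^{(0)}_{b,k}=\Box^{(1)}_{b,k}\ddbar_{b,k}$ and $N^{(1)}_k\Box^{(1)}_{b,k}=I$, and invokes $m\geq2$ to place the relevant term in ${\rm Dom\,}\Box^{(0)}_{b,k}$. Your argument is more direct: you compute $\ddbar_{b,k}(u-\ol{\pr}^*_{b,k}w)=\ol{\pr}^*_{b,k}\ddbar_{b,k}w=:\alpha$ and then kill $\alpha$ by showing it is simultaneously in $\ker\ddbar_{b,k}$ and in $(\ker\ddbar_{b,k})^\perp$, using only the domain properties built into ${\rm Dom\,}\Box^{(1)}_{b,k}$. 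This has the mild advantage that the formula \eqref{e-gue141029IV} itself is obtained without appealing to the Sobolev regularity (the condition $m\geq2$ is needed only for \eqref{e-gue141029V}--\eqref{e-gue141029VI}), but otherwise the two arguments are equivalent.
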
 

\begin{proof}
Fix $m\in\mathbb N$, $m\geq2$ and let $N_m>0$ be as in Theorem~\ref{t-gue141029I}. 
We assume that $k\geq N_m$. Let $g\in \cC^\infty(X,L^k)$. 
From Theorem~\ref{t-gue141029}, we know that 
$\ol{\pr}^*_{b,k}N^{(1)}_k\ddbar_{b,k}g\in H^m(X,L^k)$. 
Since $m\geq2$, it is clearly that 
$\ol{\pr}^*_{b,k}N^{(1)}_k\ddbar_{b,k}g\in{\rm Dom\,}\Box^{(0)}_{b,k}$. 
Moreover, it is easy to check that 
\begin{equation}\label{e-gue141029VII}
\ol{\pr}^*_{b,k}N^{(1)}_k\ddbar_{b,k}g\perp{\rm Ker\,}\ddbar_{b,k}=
{\rm Ker\,}\Box^{(0)}_{b,k}.
\end{equation}
We claim that 
\begin{equation}\label{e-gue141029VIII}
g-\ol{\pr}^*_{b,k}N^{(1)}_k\ddbar_{b,k}g\in{\rm Ker\,}\Box^{(0)}_{b,k}.
\end{equation} 
Let $f\in \cC^\infty(X,L^k)$. We have 
\[\begin{split}
&(\,g-\ol{\pr}^*_{b,k}N^{(1)}_k\ddbar_{b,k}g\,|\,\Box^{(0)}_{b,k}f\,)_{k}=
(\,\Box^{(0)}_{b,k}g\,|\,f\,)_{k}-
(\,\ol{\pr}^*_{b,k}N^{(1)}_k\ddbar_{b,k}g\,|\,\Box^{(0)}_{b,k}f\,)_{k}\\
&=(\,\Box^{(0)}_{b,k}g\,|\,f\,)_{k}-
(\,\ddbar_{b,k}g\,|\,N^{(1)}_k\Box^{(1)}_{b,k}\ddbar_{b,k}f\,)_{k}=
(\,\Box^{(0)}_{b,k}g\,|\,f\,)_{k}-(\,\ddbar_{b,k}g\,|\,\ddbar_{b,k}f\,)_{k}=0.
\end{split}\]
The claim \eqref{e-gue141029VIII} follows. From \eqref{e-gue141029VII} and \eqref{e-gue141029VIII}, 
we get \eqref{e-gue141029IV}.
 Theorem~\ref{t-gue141029} and \eqref{e-gue141029IV} yield 
 \eqref{e-gue141029V} and \eqref{e-gue141029VI}.
\end{proof}

From Theorem~\ref{t-gue141029I} and the Sobolev embedding theorem, we get Theorem~\ref{t-mainb}.

\section{Asymptotic expansion of the Szeg\H{o} kernel}\label{s-gue140912} 

In this section, we will prove Theorem~\ref{t-gue150129} and Theorem~\ref{t-main}. 
Let $s$ be a local trivializing section of $L$ 
on an open set $D\subset X$ and let $\Pi_{k,s}$ be the localized operator of $\Pi_k$ 
(see \eqref{e-gue141001}). Let $\mathcal{S}_k$ and $\mathcal{G}_k$ be as in 
Theorem~\ref{t-gue140826a}. 
From the constructions of $\mathcal{G}_k$ and $\mathcal{S}_k$, it is straightforward to see that 
we can find $\Td{\mathcal{G}_k}:H^s_{{\rm comp\,}}(D)\To H^{s+1}_{{\rm loc\,}}(D)$, 
$\Td{\mathcal{S}}_k:H^s_{{\rm comp\,}}(D)\To H^s_{{\rm loc\,}}(D)$, for every $s\in\mathbb Z$, 
such that $\Td{\mathcal{G}_k}$ and $\Td{\mathcal{S}_k}$ 
are properly supported on $D$, 
\begin{equation}\label{e-gue150131II}\begin{split}
&\Td{\mathcal{S}}_k-\mathcal{S}_k=O(k^{-\infty}):
H^s_{{\rm comp\,}}(D)\To H^s_{{\rm loc\,}}(D),\ \ \forall s\in\mathbb Z,\\
&\Td{\mathcal{G}_k}-\mathcal{G}_k=O(k^{-\infty}):
H^s_{{\rm comp\,}}(D)\To H^{s+1}_{{\rm loc\,}}(D),\ \ \forall s\in\mathbb Z,
\end{split}\end{equation}
and 
\begin{equation}\label{e-gue150130abf}
\Td\chi\,\Td{\mathcal{S}}_k\,\chi=O(k^{-\infty}):
H^s_{{\rm comp\,}}(D)\To H^s_{{\rm loc\,}}(D),\ \ \forall s\in\mathbb Z,
\end{equation}
for every $\Td\chi, \chi\in C^\infty_0(D)$ with $\supp\Td\chi\cap\supp\chi=\emptyset$, and
\begin{equation}\label{e-gue141031}
\mbox{$\Box^{(0)}_{s,k}\,\Td{\mathcal{G}_k}+\Td{\mathcal{S}_k}=I+R_k$ on $D$},
\end{equation}
where $R_k$ is properly supported on $D$ and 
\begin{equation}\label{e-gue150130ab}
R_k=O(k^{-\infty}):H^s_{{\rm loc\,}}(D)\To H^{s-1}_{{\rm loc\,}}(D),\ \ \forall s\in\mathbb Z. 
\end{equation}
From \eqref{e-gue141031}, it is easy to see that
\begin{equation}\label{e-gue140912}
\Pi_{k,s}+\Pi_{k,s}R_k=\Pi_{k,s}\Td{\mathcal{S}_k}\ \ \mbox{on $D$}.
\end{equation}

\begin{thm}\label{t-gue140912}
With the notations above, for every $\ell\in\mathbb N_0$, there is a $N_\ell>0$ such that for 
every $k\geq N_\ell$\,, $\Td\chi\,\Pi_{k}\chi=O(k^{-\infty}):\cC^\infty(X,L^k)\To\cC^\ell(X,L^k)$, 
for every $\chi\in \cC^\infty_0(D)$, $\Td\chi\in\cC^\infty(X)$ with 
$\supp\Td\chi\cap\supp\chi=\emptyset$, and
\begin{equation} \label{e-gue13630IVabm2}
\Pi_{k,s}-\mathcal{S}_k=O(k^{-\infty}):\cC^\infty_0(D)\To\cC^\ell(D).
\end{equation}
\end{thm}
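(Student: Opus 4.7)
The plan is to deduce \eqref{e-gue13630IVabm2} from the Hodge-type identity \eqref{e-gue140912} and the sharp Sobolev estimate for $I-\Pi_k$ established in Section~\ref{s-gue140909}, and then to extract the off-diagonal decay from the one already known for $\mathcal{S}_k$ in \eqref{e-gue150130abf}. Rewriting \eqref{e-gue140912} as
\[
\Pi_{k,s} \;=\; \Pi_{k,s}\Td{\mathcal{S}}_k \;-\; \Pi_{k,s} R_k,
\]
and using $\Td{\mathcal{S}}_k\equiv\mathcal{S}_k \mod O(k^{-\infty})$ from \eqref{e-gue150131II}, everything comes down to the two claims (as operators $\cC^\infty_0(D)\To \cC^\ell(D)$)
\[
\text{(A)}\ \ \Pi_{k,s}\Td{\mathcal{S}}_k\equiv \Td{\mathcal{S}}_k,\qquad
\text{(B)}\ \ \Pi_{k,s} R_k\equiv 0 \quad\mod O(k^{-\infty}).
\]

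The common tool is Theorem~\ref{t-gue141029I}. Translated through the isometry $U_{k,s}$ and using the explicit form $\ddbar_{s,k}=\ddbar_b+k(\ddbar_b\phi)\wedge$ from \eqref{e-gue140824IV}, it reads, for every $m\in\mathbb N$ and all $k$ sufficiently large,
\[
\norm{(I-\Pi_{k,s})v}_{m}\;\leq\; k^{M(m)}\norm{\ddbar_{s,k}v}_{m},\qquad v\in \cC^\infty_0(D).
\]
For (A) apply this with $v=\Td{\mathcal{S}}_k f$, $f\in \cC^\infty_0(D)$: the smallness $\ddbar_{s,k}\Td{\mathcal{S}}_k\equiv\ddbar_{s,k}\mathcal{S}_k\equiv 0 \mod O(k^{-\infty})$ in \eqref{e-gue141031b}, together with \eqref{e-gue150131II}, makes $\ddbar_{s,k}v$ rapidly decreasing in every Sobolev norm, whence $(I-\Pi_{k,s})\Td{\mathcal{S}}_k f=O(k^{-\infty})$ in every $H^m$, and Sobolev embedding yields the $\cC^\ell$-statement. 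For (B) take $v=R_kf$; by \eqref{e-gue150130ab}, $R_kf$ is itself $O(k^{-\infty})$ in every Sobolev norm, and the semi-classical $\ddbar_{s,k}$ absorbs at most one factor of $k$, so both $\norm{R_kf}_m$ and $k^{M(m)}\norm{\ddbar_{s,k}R_kf}_m$ are $O(k^{-\infty})$. Combining (A) and (B) proves \eqref{e-gue13630IVabm2}.

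For the off-diagonal statement, when both cut-offs lie in $D$ the conclusion is immediate: \eqref{e-gue13630IVabm2} gives $\Td\chi\,\Pi_{k,s}\chi\equiv \Td\chi\,\mathcal{S}_k\chi \mod O(k^{-\infty})$, and the right-hand side is $O(k^{-\infty})$ by \eqref{e-gue150130abf} and Sobolev embedding. To upgrade to $\Td\chi\in\cC^\infty(X)$, cover $X$ by finitely many trivializing charts $(D_j,s_j)$ with $(D_0,s_0)=(D,s)$, pick a subordinate partition of unity $\{\psi_j\}$, and decompose $\Td\chi=\sum_j\psi_j\Td\chi$. On each piece one repeats the construction of Sections~\ref{s-gue140824I}--\ref{s-gue140909} in the $s_j$-trivialization to obtain the local parametrix relation $\Pi_{k,s_j}\equiv\mathcal{S}_{k,j}\mod O(k^{-\infty})$; on overlaps the transition data between $s$ and $s_j$ are CR and enter through an everywhere-nonvanishing factor raised to the $k$-th power, which is harmless on compacta. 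Matching supports then reduces $\psi_j\Td\chi\,\Pi_k\chi$ to a situation where either both cut-offs live in a common chart with disjoint supports (handled above), or the supports are automatically separated, to which the same $O(k^{-\infty})$ bound applies after writing $\Pi_k$ through mixed localizations $\Pi_{k,s_j,s}$.

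The main obstacle is claim~(B): although $R_k=O(k^{-\infty})$ in Sobolev norms is given by \eqref{e-gue150130ab}, one cannot conclude $\Pi_{k,s} R_k=O(k^{-\infty})$ from polynomial boundedness of $\Pi_{k,s}$ alone, because no such global polynomial estimate for $\Pi_{k,s}$ is readily available in the Levi-flat setting (the Kohn Laplacian is not hypoelliptic). The quantitative Sobolev inequality for $I-\Pi_{k,s}$ coming from Theorem~\ref{t-gue141029I}, whose proof in turn rests on the spectral gap of Theorem~\ref{t-gue140910}, Proposition~\ref{p-gue141029}, and Kohn--Nirenberg elliptic regularization, is what bypasses this difficulty; once it is invoked, the remainder of the argument is bookkeeping with the isomorphisms \eqref{e-gue140824III}, the semi-classical splitting of $\ddbar_{s,k}$, and Sobolev embedding.
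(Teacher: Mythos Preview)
Your argument is essentially the paper's own: both hinge on the identity \eqref{e-gue140912}, the estimate \eqref{e-gue141029VI} from Theorem~\ref{t-gue141029I} applied to $\Td{\mathcal{S}}_kf$ (your claim (A)), and the negligibility of $R_k$ (your claim (B)). The one organizational difference worth noting is how the off-diagonal statement for general $\Td\chi\in\cC^\infty(X)$ is obtained. The paper applies the global inequality \eqref{e-gue141029VI} directly to $U_{k,s}\Td{\mathcal{S}}_kf\in\cC^\infty(X,L^k)$, yielding immediately that $\Pi_kU_{k,s}-U_{k,s}\Td{\mathcal{S}}_k=O(k^{-\infty}):\cC^\infty_0(D)\to\cC^\ell(X,L^k)$ with target on all of $X$; since $\Td{\mathcal{S}}_k$ is properly supported in $D$ and satisfies \eqref{e-gue150130abf}, multiplying by any $\Td\chi$ with support disjoint from ${\rm Supp\,}\chi$ kills the right-hand side. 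Your route---localizing first to $\Pi_{k,s}$ on $D$ and then globalizing by a partition of unity, repeating the parametrix construction in every chart $(D_j,s_j)$ and invoking mixed localizations $\Pi_{k,s_j,s}$---reaches the same conclusion but is more laborious and, as sketched, a bit imprecise (the relation $\Pi_{k,s_j}\equiv\mathcal{S}_{k,j}$ governs inputs supported in $D_j$, not in $D$). The simpler fix is just to observe that your own computation for (A) and (B), read through $U_{k,s}$, already gives the global $\cC^\ell(X,L^k)$ estimate, exactly as the paper does in \eqref{e-gue141031V}--\eqref{e-gue150131I}.
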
 

\begin{proof}
Fix $\ell\in\mathbb N_0$. From Theorem~\ref{t-gue141029I}, there exists 
$N_\ell>0$ such that for every $k\geq N_\ell$, 
\begin{equation}\label{e-gue141031I}
\begin{split}
&\mbox{$\Pi_k=I-\ol{\pr}^*_{b,k}N^{(1)}_k\ddbar_{b,k}$ on $\cC^\infty(X,L^k)$},\ \ 
\Pi_k: \cC^\infty(X,L^k)\To H^{\ell+n}(X,L^k),\\
&\norm{(I-\Pi_k)u}_{n+\ell,k}\leq k^{M(\ell)}\norm{\ddbar_{b,k}u}_{n+\ell,k},\ \ 
\forall u\in \cC^\infty(X,L^k),
\end{split}
\end{equation}
where $M(\ell)>0$ is a constant independent of $k$ and $u$. 
Now, we assume that $k\geq N_\ell$. 
By the Sobolev embedding theorem we have $H^{\ell+n}(X,L^k)\subset \cC^\ell(X,L^k)$.  

Fix $N_1>0$ and let $u\in\cC^\infty_0(D)$. Consider 
\begin{equation}\label{e-gue141031II}
v=U_{k,s}\Td{\mathcal{S}_k}u-\Pi_k(U_{k,s}\Td{\mathcal{S}_k}u)=(I-\Pi_k)(U_{k,s}\Td{\mathcal{S}_k}u).
\end{equation}
From \eqref{e-gue140912}, we have 
\begin{equation}\label{e-gue141031III}
\begin{split}
\mbox{$v=U_{k,s}(\Td{\mathcal{S}_k}-\Pi_{k,s}\Td{\mathcal{S}_k})u$ on $D$},\ \ 
\mbox{$v=U_{k,s}(\Td{\mathcal{S}_k}u)-\Pi_k(U_{k,s}(I+R_k)u)$ on $X$}.
\end{split}
\end{equation}
From \eqref{e-gue141031I} and \eqref{e-gue141031II}, we obtain 
\begin{equation}\label{e-gue141031IV}
\norm{(I-\Pi_k)(U_{k,s}\Td{\mathcal{S}_k}u)}_{n+\ell,k}\leq 
k^{M(\ell)}\norm{\ddbar_{b,k}(U_{k,s}\Td{\mathcal{S}_k}u)}_{n+\ell,k}.
\end{equation}
Note that 
$\ddbar_{s,k}\Td{\mathcal{S}_k}=O(k^{-\infty}):
H^s_{{\rm comp\,}}(D)\To H^{s-1}_{{\rm loc\,}}(D)$
for all $s\in\mathbb Z$. From this observation, \eqref{e-gue141031IV} and the second formula 
of \eqref{e-gue141031III} we conclude that 
\begin{equation}\label{e-gue141031V}
U_{k,s}\Td{\mathcal{S}_k}-\Pi_kU_{k,s}-\Pi_kU_{k,s}R_k=O(k^{-\infty}):\cC^\infty_0(D)\To\cC^\ell(X,L^k).
\end{equation}
From \eqref{e-gue150130ab} and \eqref{e-gue141031I}, it is easy to see that 
\begin{equation}\label{e-gue150131}
\Pi_kU_{k,s}R_k=O(k^{-\infty}):\cC^\infty_0(D)\To\cC^\ell(X,L^k).
\end{equation}
From \eqref{e-gue141031V} and \eqref{e-gue150131}, we conclude that 
\begin{equation}\label{e-gue150131I}
U_{k,s}\Td{\mathcal{S}_k}-\Pi_kU_{k,s}=O(k^{-\infty}):\cC^\infty_0(D)\To\cC^\ell(X,L^k).
\end{equation}
From \eqref{e-gue150131I} and \eqref{e-gue150131II}, \eqref{e-gue13630IVabm2} follows. 

Finally, from \eqref{e-gue150131I}, \eqref{e-gue150130abf} and noting that $\Td{\mathcal{S}_k}$ is 
properly supported on $D$, we deduce that 
$\Td\chi\,\Pi_{k}\chi=O(k^{-\infty}):\cC^\infty(X,L^k)\To\cC^\ell(X,L^k)$, 
for every $\chi\in \cC^\infty_0(D)$, $\Td\chi\in\cC^\infty(X)$ with 
$\supp\Td\chi\cap\supp\chi=\emptyset$. 
\end{proof}
\begin{proof}[Proof of Theorem~\ref{t-gue150129}]
This follows immediately from Theorems \ref{t-gue150130} and \ref{t-gue140912}.
\end{proof}

\begin{proof}[Proof of Theorem~\ref{t-main}]
Let $\mathcal{A}_k$ be as in Theorem~\ref{t-main}. It is not difficult to see that for every 
$s\in\mathbb Z$ and $N\in\mathbb N$, there exists $n(N,s)>0$ independent of $k$, such that 
\begin{equation}\label{e-gue150131b}
\mathcal{A}_k=O(k^{n(N,s)}):H^s_{{\rm comp\,}}(D)\To\cC^N_0(D).
\end{equation}
From \eqref{e-gue150131b}, \eqref{e-gue13630IVabm2} and since 
$\mathcal{A}_k:H^s_{{\rm comp\,}}(D)\To\cC^\infty_0(D)$ for every $s\in\mathbb Z$, we conclude that 
\begin{equation}\label{e-gue150131bI}
\Pi_{k,s}\mathcal{A}_k\equiv\mathcal{S}_k\mathcal{A}_k\mod O(k^{-\infty}).
\end{equation}
From \eqref{e-gue150131bI} and Theorem~\ref{t-gue140826aI}, Theorem~\ref{t-main} follows. 
\end{proof}

\section{Kodaira Embedding theorem for Levi-flat CR manifolds}\label{s-gue140915}

In this section, we will prove Theorem~\ref{t-embleviflat}. 
Let $s$ be a local trivializing section of $L$ on an open set $D\subset X$. 
Fix $p\in D$ and let $x=(x_1,\ldots,x_{2n-1})$, $z_j=x_{2j-1}+ix_{2j}$, $j=1,\ldots,n-1$, 
be local coordinates of $X$ defined in some small neighbourhood of $p$ such that 
\eqref{e-gue140826m} hold. We may assume that the local coordinates $x$ defined on $D$. 
We write $x'=(x_1,\ldots,x_{2n-2})$. Let $M>1$ be a large constant so that 
\begin{equation}\label{e-gue140916}
\abs{-2{\rm Im\,}\ddbar_b\phi(x)+u\omega_0(x)}^2\leq\frac{M^2}{8},\ \ 
\forall x\in D, \abs{u}\leq1.
\end{equation}
Consider
\[
\tau\in\cC^\infty_0(\Real,[0,1]),\: \text{$\tau=1$ on $\big[\tfrac{1}{4},\tfrac{1}{2}\big]$, 
$\supp\tau\subset[0,1]$},
\]
\[
\chi\in\cC^\infty_0(\Real,[0,1]),\:\text{$\chi=1$ on $\big[-\tfrac{1}{2},\tfrac{1}{2}\big]$, 
$\supp\chi\subset[-1,1]$, $\chi(t)=\chi(-t)$, $t\in\Real$}.
\] 
Fix $0<\delta<1$. Put 
\begin{equation}\label{e-gue140916Ibf}
\alpha_\delta(x,\eta,k):=\tau\Big(\frac{\langle\,\eta\,|\,\omega_0(x)\,\rangle}{\delta}\Big)
\chi\Big(\frac{4\abs{\eta}^2}{M^2}\Big)\in S^0_{{\rm cl\,}}(1,T^*D)
\end{equation}
and let $\mathcal{A}_{k,\delta}$ be a properly supported classical semi-classical pseudodifferential operator on $D$ with 
\[\mathcal{A}_{k,\delta}(x,y)\equiv\frac{k^{2n-1}}{(2\pi)^{2n-1}}
\int e^{ik\langle x-y,\eta\rangle}\alpha_\delta(x,\eta,k)d\eta\mod O(k^{-\infty}).
\]
Fix $\ell\in\mathbb N$, $\ell\geq2$.  In view of Theorem~\ref{t-main}, we see that there is a $N_\ell>0$ 
such that for every $k\geq N_\ell$, 
$\Pi_{k,s}\mathcal{A}_{k,\delta}(x,y)\in \cC^\ell(D\times D)$ and
\begin{equation} \label{e-gue140916I}
(\Pi_{k,s}\mathcal{A}_{k,\delta})(x,y)\equiv\int e^{ik\psi(x,y,u)}a_\delta(x,y,u,k)du\mod O(k^{-\infty})\ \ \text{in $\cC^\ell(D\times D)$},
\end{equation}
where
\begin{equation}  \label{e-gue140916II}
\begin{split}
&a_\delta(x,y,u,k)\in \cC^\infty_0(D\times D\times(-M,M))\cap  S^{n}_{{\rm loc\,},{\rm cl\,}}(1;D\times D\times(-M,M)),\\
&a_\delta(x,y,u,k)\sim\sum^\infty_{j=0}a_{j,\delta}(x,y,u)k^{n-j}\text{ in }S^{n}_{{\rm loc\,}}
(1;D\times D\times(-M,M)).
\end{split}
\end{equation}
From \eqref{e-gue13630Vabm}, \eqref{e-gue140916} and \eqref{e-gue140916I}, we get 
\begin{equation}\label{e-gue140916III}
a_{0,\delta}(x,x,u)=\frac{1}{2}\pi^{-n}
\abs{\det R^L_x}\tau\Big(\frac{u}{\delta}\Big),\ \ 
\forall (x,x,u)\in D\times D\times(-M,M).
\end{equation}
From now on, we assume that $k\geq N_\ell$. 

We will use the following rescaling of the coordinates:
\[
F^*_k:\Real^{n-1}\to\Real^{n-1},\quad 
F^*_ky:=\Big(\frac{y_1}{\sqrt{k}},\frac{y_2}{\sqrt{k}},\ldots,
\frac{y_{2n-2}}{\sqrt{k}},\frac{y_{2n-1}}{k}\Big).
\]
We introduce the shorthand notations
\[
\boldsymbol{\chi}(y):=\chi(y_1)\ldots\chi(y_{2n-2})
\chi(y_{2n-1}),
\]
\[\boldsymbol{\chi}(k,y):=\chi(\sqrt{k}y_1)\ldots\chi(\sqrt{k}y_{2n-2})
\chi(ky_{2n-1}).\]
hence $\boldsymbol{\chi}(y)=\boldsymbol{\chi}(k,F^*_ky)$.

For $j=1,\ldots,n-1$, let $\lambda_j=\lambda_j(p)$ are the eigenvalues of 
$R^L_p$ with respect to $\langle\,\cdot\,|\,\cdot\,\rangle$
and let $\alpha_j\in\Complex$ 
be as in \eqref{e-gue140826m}. Set
\[
R(w)=\sum^{n-1}_{l=1}(\alpha_lw_l-\ol\alpha_l\ol w_l)
+iuy_{2n-1}+\frac{1}{2}\sum^{n-1}_{j=1}\lambda_j\abs{w_j}^2
\]
where $w_j=y_{2j-1}+iy_{2j}$.
Let 
\begin{equation}\label{e-gue140916IV}
u_{k,\delta,p}:=\Pi_kU_{k,s}\mathcal{A}_{k,\delta}
\Bigl(e^{kR(w)}
\boldsymbol{\chi}(k,y)\Bigr),
\end{equation}
so $u_{k,\delta,p}$ is a global $\cC^\ell$ CR section. We write 
$u_{k,\delta,p}=U_{k,s}\Td u_{k,\delta,p}$ on $D$, with $\Td u_{k,\delta,p}\in C^{\ell}(D)$. Then, 
$\abs{u_{k,\delta,p}(x)}_{h^{k}}=\abs{\Td u_{k,\delta,p}(x)}$, $x\in D$.
Put  
\[
\begin{split}
\psi_0(x,y,u)&:=\psi(x,y,u)-i\sum^{n-1}_{j=1}(\alpha_jw_j-\ol\alpha_j\ol w_j)
+uy_{2n-1}-\frac{i}{2}\sum^{n-1}_{j=1}\lambda_j\abs{w_j}^2\\
&=\psi(x,y,u)-iR(w).
\end{split}
\]
From \eqref{e-gue140916I}, we can check that we have 
$\operatorname{mod} O(k^{-\infty})$ in $\cC^\ell(D)$,
\begin{equation}\label{e-gue140916V}
\begin{split}
\Td u_{k,\delta,p}(x)
&\equiv\int e^{ik\psi_0(x,y,u)}a_\delta(x,y,u,k)\boldsymbol{\chi}(k,y)
\\
&\equiv\int e^{ik\psi_0(x,F^*_ky,u)}k^{-n}a_\delta(x,F^*_ky,u,k)
\boldsymbol{\chi}(y)dudy.
\end{split}
\end{equation}
Put
\begin{equation}\label{e-gue140917}
\widehat u_{k,\delta,p}:=\exp\Bigl(\!-k\sum\limits^{n-1}_{j=1}(\alpha_jz_j-\ol\alpha_j\ol z_j)\!\Bigr)
\Td u_{k,\delta,p}\in \cC^\ell(D).
\end{equation}
\begin{lem}\label{l-gue140916}
With the notations above, there is a $k_0>0$ such for all $k\geq k_0$ and $p\in X$, 
\begin{equation}\label{e-gue140916VI}
\begin{split}
\frac{1}{8}\delta c_p\leq\abs{\widehat u_{k,\delta,p}(p)}\leq 2\delta c_p,\ \ \frac{1}{32}\delta^2 c_p\leq \abs{\frac{1}{k}\frac{\pr\widehat u_{k,\delta,p}}{\pr x_{2n-1}}(p)}\leq 2\delta^2 c_p,\ \ \abs{\frac{1}{k}\frac{\pr\widehat u_{k,\delta,p}}{\pr x_{j}}(p)}\leq\delta^4,
\end{split}
\end{equation}
where $j=1,2,\ldots,2n-2$, and $c_p=\frac{1}{2}\pi^{-n}\abs{\det R^L_p}\int\boldsymbol{\chi}(y)dy$.
\end{lem}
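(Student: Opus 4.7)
The plan is to substitute $x=p=0$ into the oscillatory integral representation \eqref{e-gue140916V} for $\widetilde u_{k,\delta,p}$ and extract the leading behavior as $k\to\infty$. At $x=0$ the exponential prefactor in \eqref{e-gue140917} is $1$, so $\widehat u_{k,\delta,p}(0)=\widetilde u_{k,\delta,p}(0)$. I would first plug the Taylor expansion \eqref{e-gue140826Im} into the definition $\psi_0:=\psi-i\sum(\alpha_jw_j-\bar\alpha_j\bar w_j)+uy_{2n-1}-\frac{i}{2}\sum\lambda_j|w_j|^2$ and specialize to $x=0$: the terms of $\psi(0,y,u)$ that are linear in $y$ (the $\alpha_j$- and $uy_{2n-1}$-terms) together with $\frac{i}{2}\sum\lambda_j|w_j|^2=\frac{i}{2}\sum\lambda_j|z-w|^2\big|_{z=0}$ cancel exactly, giving $\psi_0(0,0,u)=0$ and $\nabla_y\psi_0(0,0,u)=0$. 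Thus $k\psi_0(0,F_k^*y,u)$ remains bounded on the support of the cut-offs, converging as $k\to\infty$ to an explicit real quadratic function $\Phi(y',u)$ of $y'=(y_1,\ldots,y_{2n-2})$ and $u$ (reality uses $\tau_{j,l}+\bar\tau_{l,j}=0$, which forces $\sum\tau_{j,l}w_j\bar w_l$ to be purely imaginary).

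Next I would apply dominated convergence (using $\operatorname{Im}\psi\ge 0$) to obtain
\[
\widehat u_{k,\delta,p}(0)=\int e^{i\Phi(y',u)}\tfrac{1}{2}\pi^{-n}|\det R^L_p|\tau(u/\delta)\chi(y_{2n-1})\chi(y_1)\cdots\chi(y_{2n-2})\,du\,dy+o_k(1).
\]
After choosing the local trivialization so that the pluriharmonic quadratic part of $\phi$ vanishes at $p$ (i.e.\ $a_{l,t}=0$ in \eqref{e-gue140826m}, achieved by multiplying $s$ by a local CR function), $\Phi$ becomes linear in $u$; since $\tau(u/\delta)$ is supported in $u\in[0,\delta]$, this gives $|\Phi|\le C\delta$ on the support and $e^{i\Phi}=1+O(\delta)$. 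Fubini then yields $\widehat u_{k,\delta,p}(0)=c_p\bigl(\int\tau(s)\,ds\bigr)\delta+O(\delta^2)+o_k(1)$ with $\int\tau(s)\,ds\in[1/4,1]$; taking $k_0=k_0(\delta)$ large absorbs the $o_k(1)$ term and yields the first line of \eqref{e-gue140916VI}.

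For the derivatives I would differentiate under the integral. For $j=2n-1$ the prefactor is independent of $x_{2n-1}$ and \eqref{e-gue140826Im} gives $\partial_{x_{2n-1}}\psi_0(0,F_k^*y,u)=u+O(k^{-1/2})$, so the same dominated-convergence argument produces $\tfrac{1}{k}\partial_{x_{2n-1}}\widehat u_{k,\delta,p}(0)\to ic_p\bigl(\int s\tau(s)\,ds\bigr)\delta^2$ with $\int s\tau(s)\,ds\in[3/32,1/2]$, giving the second line of \eqref{e-gue140916VI}. For $j=2l-1$ or $j=2l$ with $l\le n-1$, a direct calculation from \eqref{e-gue140826Im} gives $\partial_{x_{2l-1}}\psi_0(0,F_k^*y,u)=2\operatorname{Im}\alpha_l-i\lambda_l\bar w_l/\sqrt{k}+O(k^{-1})$, while the prefactor contributes $\partial_{x_{2l-1}}e^{-k\sum(\alpha_j z_j-\bar\alpha_j\bar z_j)}\big|_{x=0}=-2ik\operatorname{Im}\alpha_l$; the $2ik\operatorname{Im}\alpha_l\cdot\widetilde u_{k,\delta,p}(0)$ piece coming from the constant term in $ik\,\partial_{x_{2l-1}}\psi_0$ is exactly cancelled by the prefactor contribution, leaving a residual of size $\lambda_l\sqrt{k}\int\bar w_l(\cdots)\,du\,dy=O(\sqrt{k})$, whence $(1/k)\partial_{x_j}\widehat u_{k,\delta,p}(0)=O(k^{-1/2})$. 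Choosing $k_0\gtrsim\delta^{-8}$ then makes this $\le\delta^4$.

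The hard part will be the cancellation in the $j\le 2n-2$ case: it is the \emph{raison d'\^etre} of the twisting factor $e^{-k\sum(\alpha_j z_j-\bar\alpha_j\bar z_j)}$ in \eqref{e-gue140917}, matched to the linear exponential $e^{k\sum(\alpha_l w_l-\bar\alpha_l\bar w_l)}$ in the input of $\Pi_k\mathcal{A}_{k,\delta}$. Without it, $(1/k)\partial_{x_j}\widehat u$ would be of order $1$ rather than $O(k^{-1/2})$, destroying the asymmetry between $\partial_{x_{2n-1}}$ and the other derivatives that powers the separation of tangent vectors in the subsequent proof of Theorem~\ref{t-embleviflat}.
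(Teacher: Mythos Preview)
Your argument follows the same path as the paper's proof: plug $x=0$ into the rescaled oscillatory integral \eqref{e-gue140916V}, pass to the limit $k\to\infty$ by dominated convergence, and read off the three estimates from the explicit bounds $\tfrac14\le\int\tau(s)\,ds\le1$ and $\tfrac{1}{16}\le\int s\tau(s)\,ds\le1$ (after the substitution $u=\delta s$). The paper writes the limit simply as $c_p\int\tau(u/\delta)\,du$ and the analogous expressions, whereas you keep track of the residual quadratic phase
\[
\Phi(y',u)=\lim_{k\to\infty} k\,\psi_0(0,F_k^\ast y,u)
=\tfrac{i}{2}\sum_{j,l}(a_{l,j}+a_{j,l})w_jw_l-\tfrac{i}{2}\sum_{j,l}(\overline a_{l,j}+\overline a_{j,l})\overline w_j\overline w_l
+iu\sum_{j,l}\tau_{j,l}w_j\overline w_l,
\]
which is real but not zero in general. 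You then kill the $u$-independent piece by normalizing the local trivialization so that $a_{l,t}=0$ and observe that the remaining $u$-dependent piece is $O(\delta)$ on the support, so $e^{i\Phi}=1+O(\delta)$ and the paper's clean formulas hold up to an $O(\delta)$ correction.

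This is a genuine refinement: as written, the paper's displayed limits tacitly ignore $\Phi$, and without some device like yours the lower bound in the first line of \eqref{e-gue140916VI} is not obvious (the $a_{l,t}$-part of $\Phi$ could cause cancellation in the $y'$-integral). Your normalization is legitimate on a Levi-flat manifold: one can find local CR coordinates $\zeta_l=z_l+O(|x|^2)$ and replace $s$ by $e^{g}s$ with $g=2\sum a_{l,t}\zeta_l\zeta_t$, which is CR and has $\operatorname{Re}g=\sum(a_{l,t}z_lz_t+\overline a_{l,t}\overline z_l\overline z_t)+O(|x|^3)$. Note that this modifies the very definition of $u_{k,\delta,p}$ (through the new $\alpha_j,\lambda_j,\ldots$ in \eqref{e-gue140826m}); since the downstream application only needs the \emph{existence} of peak sections satisfying \eqref{e-gue140916VI}, this is harmless. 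The resulting $k_0$ depends on $\delta$ (you need $k_0\gtrsim\delta^{-8}$ for the third estimate and $\delta$ small enough to absorb the $O(\delta)$ errors), and on $\sup_X|\tau_{j,l}|$, $\sup_X\lambda_j$; compactness of $X$ makes these uniform in $p$, exactly as the paper argues at the end.
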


\begin{proof}
From \eqref{e-gue140916V}, \eqref{e-gue140916III}, \eqref{e-gue140826Im} 
and note that $\psi_0(0,0,u)=0$, $\forall u\in\Real$, we can check that 
\[\begin{split}
&\lim_{k\To\infty}\abs{\widehat u_{k,\delta,p}(p)}
=\frac{1}{2}\pi^{-n}\abs{\det R^L_p}\int\tau\Big(\frac{u}{\delta}\Big)\boldsymbol{\chi}(y)dydu,\\
&\lim_{k\To\infty}\abs{\frac{1}{k}\frac{\pr\widehat u_{k,\delta,p}}{\pr x_{2n-1}}(p)}=\frac{1}{2}\pi^{-n}\abs{\det R^L_p}\int u\tau\Big(\frac{u}{\delta}\Big)\boldsymbol{\chi}(y)dydu,\\
&\lim_{k\To\infty}\abs{\frac{1}{k}\frac{\pr\widehat u_{k,\delta,p}}{\pr x_{j}}(p)}=0,\ \ j=1,2,\ldots,2n-2.
\end{split}\]
Since $\frac{\delta}{4}\leq\int\tau\big(\frac{u}{\delta}\big)du\leq\delta$ and 
$\frac{\delta^2}{16}\leq\int u\tau\big(\frac{u}{\delta}\big)du\leq\delta^2$,  there is $k_0>0$ such that for every $k\geq k_0$, \eqref{e-gue140916VI} hold. Since $X$ is compact, $k_0$ can be taken to be independent of the point $p$. 
\end{proof} 

For every $j=1,2,\ldots,n-1$, let 
\begin{equation}\label{e-gue140916a}
\begin{split}
&u^j_{k,\delta,p}:=
\Pi_kU_{k,s}\mathcal{A}_{k,\delta}
\Bigl(e^{kR(w)}
\sqrt{k}(y_{2j-1}+iy_{2j})\boldsymbol{\chi}(k,y)\Bigr).
\end{split}
\end{equation}
Then, $u^j_{k,\delta,p}$ is a global $\cC^\ell$ CR section. On $D$, we write 
$u^j_{k,\delta,p}=U_{k,s}\Td u^j_{k,\delta,p}$, with
$\Td u^j_{k,\delta,p}\in \cC^\ell(D)$. 
From \eqref{e-gue140916I}, we can check that
\begin{equation}\label{e-gue140916aI}
\begin{split}
&\Td u^j_{k,\delta,p}(x)
\equiv\int e^{ik\psi_0(x,F^*_ky,u)}
k^{-n}a_\delta(x,F^*_ky,u,k)(y_{2j-1}+iy_{2j})\boldsymbol{\chi}(y)dudy,
\end{split}
\end{equation}
$\operatorname{mod} O(k^{-\infty})\ \ \mbox{in $\cC^\ell(D)$}$. 
Put
\begin{equation}\label{e-gue140917I}
\widehat u^j_{k,\delta,p}:=
\exp\!\Big(\!\!-k\sum\limits^{n-1}_{l=1}(\alpha_lz_l-\ol\alpha_l\ol z_l)\!\Big)
\Td u^j_{k,\delta,p}\in \cC^\ell(D),\ \ j=1,2,\ldots,n-1.
\end{equation}
\begin{lem}\label{l-gue140916I}
With the notations above, there exists $k_0>0$ such that for all $p\in X$ and $k\geq k_0$\,, 
\begin{equation}\label{e-gue140916b}
\begin{split}
&\abs{\widehat u^j_{k,\delta,p}(p)}\leq\delta^4,\:\:
\abs{\frac{1}{k}\frac{\pr\widehat u^j_{k,\delta,p}}{\pr x_{2n-1}}(p)}\leq\delta^4,\ \ 
\abs{\frac{1}{k}\frac{\pr\widehat u^j_{k,\delta,p}}{\pr z_{j}}(p)}
\geq\frac{1}{8}\delta\lambda_jd_p,\ \ j=1,2,\ldots,n-1,\\
&\abs{\frac{1}{k}\frac{\pr\widehat u^j_{k,\delta,p}}{\pr\ol z_{s}}(p)}
\leq\delta^4,\ \ j,s=1,2,\ldots,n-1,\\
&\abs{\frac{1}{k}\frac{\pr\widehat u^j_{k,\delta,p}}{\pr z_{s}}(p)}
\leq\delta^4,\ \ j,s=1,2,\ldots,n-1,\ \ j\neq s,\\
\end{split}
\end{equation}
where $\{\lambda_j\}_{j=1}^{n-1}$ 
are the eigenvalues of 
$R^L_p$ with respect to $\langle\,\cdot\,|\,\cdot\,\rangle$
and \[d_p=
\frac{1}{2\pi^{n}}\abs{\det R^L_p}\int\abs{y_1+iy_2}^2\boldsymbol{\chi}(y)dy.\]
\end{lem}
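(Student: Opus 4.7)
The proof closely parallels that of Lemma~\ref{l-gue140916}, the essential new ingredient being the extra factor $w_j := y_{2j-1}+iy_{2j}$ in the oscillatory integral \eqref{e-gue140916aI}. My plan is to differentiate under the integral in \eqref{e-gue140916aI} and \eqref{e-gue140917I}, obtaining oscillatory-integral expressions for $\widehat u^j_{k,\delta,p}(p)$ and each of its partial derivatives at $p$, and then to take the $k\to\infty$ limit using the leading symbol formula \eqref{e-gue140916III} and the Taylor expansion \eqref{e-gue140826Im} of $\psi$ near $p$.

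The role of the exponential prefactor $\exp(-k\sum_l(\alpha_lz_l-\ol{\alpha}_l\ol{z}_l))$ in \eqref{e-gue140917I} is to cancel the leading $O(k)$ contribution produced by the value $\pr_{z_j}\psi(0,0,u)=-i\alpha_j$ when one differentiates the phase $e^{ik\psi_0}$; after this cancellation, each $\tfrac{1}{k}\pr_{\ast}\widehat u^j_{k,\delta,p}(p)$ admits a controlled limit. For the first four bounds (on $\widehat u^j(p)$ and the derivatives in $x_{2n-1}$, $\ol{z}_s$, and $z_s$ with $s\ne j$), the leading limiting integrand takes the form (even function of $y$)$\times w_j$; against the even cutoffs $\chi$ this is odd in $y_{2j-1}$ and $y_{2j}$, so the leading integral vanishes, and the subleading remainder is bounded by $\delta^4$ once $k\ge k_0(\delta)$ is taken sufficiently large.

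For the fifth (key) bound, the derivative $\pr_{z_j}\psi(0,y,u)$ contains, besides the $-i\alpha_j$ already cancelled by the prefactor, the Levi-form term $-i\lambda_j\ol{w}_j$ arising from $\tfrac{i}{2}\lambda_j|z_j-w_j|^2-\tfrac{i}{2}\lambda_j(z_j\ol{w}_j-\ol{z}_jw_j)$ in \eqref{e-gue140826Im}. Combined with the factor $w_j$ already present in the integrand, this produces $\lambda_j|w_j|^2$, an \emph{even} quantity, so that the leading limiting integral equals
\[
\frac{\lambda_j}{2}\pi^{-n}\abs{\det R^L_p}\int\tau\Big(\frac{u}{\delta}\Big)\,|w_j|^2\,\chi(y_{2n-1})\chi(y_1)\cdots\chi(y_{2n-2})\,dy\,du.
\]
Using the estimates $\tfrac{\delta}{4}\le\int\tau(u/\delta)\,du\le\delta$ and the definition of $d_p$, the modulus of this expression lies in $[\tfrac{1}{8}\delta\lambda_jd_p,\,2\delta\lambda_jd_p]$, and compactness of $X$ then gives a single $k_0$ valid uniformly in $p\in X$.

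The main technical obstacle I anticipate is controlling the non-trivial unit-modulus phase $\Phi_0(w,\ol{w},u)$ that survives in the $k\to\infty$ limit of $ik\psi_0(0,F^*_ky,u)$ and multiplies every integrand; the clean odd-in-$y_{2j-1}$ symmetry underlying the first four bounds requires either a careful Taylor expansion in $\delta$ of $e^{\Phi_0(w,\ol w,\delta v)}$ around $v=0$, or an adjustment of the local CR trivialization of $L$ at $p$ that kills the pure holomorphic-quadratic coefficients $a_{l,m}$ in \eqref{e-gue140826m}; after such normalization the quadratic form $\Phi_0(w,\ol w,0)$ becomes trivial and the symmetry cancellations apply at leading order in $\delta$, so that both the sharp upper bounds $\le\delta^4$ and the sharp lower bound $\ge\tfrac18\delta\lambda_jd_p$ follow from the explicit evaluation above together with a standard stationary-phase error estimate.
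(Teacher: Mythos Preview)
Your approach coincides with the paper's: evaluate the oscillatory integral \eqref{e-gue140916aI} and its $x$-derivatives at $x=p$, pass to the limit $k\to\infty$ using \eqref{e-gue140916III} and the expansion \eqref{e-gue140826Im} of $\psi$, and conclude by compactness of $X$. You have in fact been more explicit than the paper in flagging the residual unit-modulus factor $e^{\Phi_0}$ that survives in the limit $ik\psi_0(0,F^*_ky,u)\to\Phi_0(w,\ol w,u)$; the paper's proof silently drops it.

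Your worry about this phase breaking the parity argument is, however, largely overdrawn. The quantity $\Phi_0$ is a quadratic form in $y'=(y_1,\dots,y_{2n-2})$, hence even under the \emph{joint} reflection $y'\mapsto -y'$; since the cutoffs $\chi(y_l)$ are even and the extra factor $w_j$ is odd under that same reflection, the limiting integrals for $\widehat u^j_{k,\delta,p}(p)$ and $\tfrac1k\,\partial_{x_{2n-1}}\widehat u^j_{k,\delta,p}(p)$ vanish regardless of $\Phi_0$ --- you need neither parity in $y_{2j-1}$, $y_{2j}$ separately nor any normalization of the $a_{l,m}$ or $\tau_{j,l}$. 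Where your concern is genuine is the lower bound on $\bigl|\tfrac1k\,\partial_{z_j}\widehat u^j_{k,\delta,p}(p)\bigr|$: there the limiting integrand is $e^{\Phi_0}\,a_{0,\delta}(0,0,u)\,\lambda_j|w_j|^2\,\chi(y_1)\cdots\chi(y_{2n-1})$, and one must show its integral has modulus comparable to the same integral without $e^{\Phi_0}$. Your fix --- replace the trivializing section $s$ by $se^{g}$ for a local CR function $g$ chosen to annihilate the symmetric part of $(a_{l,m})$, so that $\Phi_0(w,\ol w,0)=0$, and then use $|u|\le\delta$ on $\operatorname{supp}\tau(\cdot/\delta)$ to bound the remaining $\tau_{j,l}$-contribution by $O(\delta)$ --- is the standard and correct way to handle this, and is in fact already needed for the lower bounds in Lemma~\ref{l-gue140916}.
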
 

\begin{proof}
From \eqref{e-gue140916aI}, \eqref{e-gue140916III}, \eqref{e-gue140826Im} 
and observing that $\psi_0(0,0,u)=0$ for all $u\in\Real$, it is straightforward to check that 
for every $j,s,t=1,\ldots,n-1$, $s\neq j$,
\[\begin{split}
\lim_{k\To\infty}\Big|\frac{1}{k}&\frac{\pr\widehat u^j_{k,\delta,p}}{\pr z_{j}}(p)\Big|\\
&=\frac{\lambda_j}{2\pi^{n}}\abs{\det R^L_p}\int\tau\!\left(\frac{u}{\delta}\right)
\abs{y_{2j-1}+iy_{2j}}^2\boldsymbol{\chi}(y)dydu,
\end{split}\]
\[
\lim_{k\To\infty}\abs{\widehat u^j_{k,\delta,p}(p)}=
\lim_{k\To\infty}\abs{\frac{1}{k}\frac{\pr\widehat u^j_{k,\delta}}{\pr x_{2n-1}}(p)}=
\lim_{k\To\infty}\abs{\frac{1}{k}\frac{\pr\widehat u^j_{k,\delta,p}}{\pr z_{s}}(p)}=
\lim_{k\To\infty}\abs{\frac{1}{k}\frac{\pr\widehat u^j_{k,\delta}}{\pr\ol z_{t}}(p)}=0.
\]
Since $\frac{\delta}{4}\leq\int\tau\Big(\frac{u}{\delta}\Big)du\leq\delta$, there is a constant $k_0>0$ 
such that \eqref{e-gue140916b} holds for every $k\geq k_0$. Since $X$ is compact, $k_0$ can be 
taken to be independent of the point $p$. The lemma follows.
\end{proof} 
Consider the $\cC^\ell$ map 
\begin{equation}\label{e-gue140916bII}
\Phi_{k,\delta,p}:D\to\Complex^{n},\:\: x\longmapsto \left(\frac{\Td u_{k,\delta,p}}{\Td u_{k,\delta^2,p}}(x),
\frac{\Td u^1_{k,\delta,p}}{\Td u_{k,\delta^2,p}}(x),\ldots,
\frac{\Td u^{n-1}_{k,\delta,p}}{\Td u_{k,\delta^2,p}}(x)\right).
\end{equation}

The following Lemma is a consequence of \eqref{e-gue140916b} and \eqref{e-gue140916VI} together with a 
straightforward computation and therefore we omit the details. 

\begin{lem}\label{l-gue140916II}
With the notations above, there are $k_0>0$ and $0<\delta_0<1$  
such that for all $k\geq k_0$, $0<\delta\leq\delta_0$ and $p\in X$, the differential of 
$\Phi_{k,\delta,p}$ is injective at $p$\,.
\end{lem}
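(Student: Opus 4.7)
The gauge factor $\exp\!\bigl(k\sum_{l=1}^{n-1}(\alpha_l z_l-\ol\alpha_l\ol z_l)\bigr)$ cancels in every quotient defining the components of $\Phi_{k,\delta,p}$, so
\[
\Phi_{k,\delta,p}(x)=\Bigl(\frac{\widehat u_{k,\delta,p}}{\widehat u_{k,\delta^2,p}}(x),\,\frac{\widehat u^1_{k,\delta,p}}{\widehat u_{k,\delta^2,p}}(x),\ldots,\frac{\widehat u^{n-1}_{k,\delta,p}}{\widehat u_{k,\delta^2,p}}(x)\Bigr)
\]
on $D$. Both the values and the first-order derivatives at $p$ of the numerators and the denominator are precisely the quantities bounded in Lemma~\ref{l-gue140916} and Lemma~\ref{l-gue140916I}; Lemma~\ref{l-gue140916} will be invoked once with the parameter $\delta$ (for $\widehat u_{k,\delta,p}$) and once with $\delta^2$ (for $\widehat u_{k,\delta^2,p}$).

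Setting $v:=\widehat u_{k,\delta,p}$, $w:=\widehat u_{k,\delta^2,p}$, $v_j:=\widehat u^j_{k,\delta,p}$, the first step is to feed the pointwise bounds $|v(p)|\asymp\delta$, $|w(p)|\asymp\delta^2$, $|v_j(p)|=O(\delta^4)$ together with the derivative estimates of Lemmas~\ref{l-gue140916}--\ref{l-gue140916I} into the quotient rule $\pr_\ell(a/b)=\pr_\ell a/b-(a/b^2)\pr_\ell b$. A straightforward $\delta$-power accounting then shows that at $p$ there are constants $A_0,B_0>0$ depending only on $c_p$, $d_p$ and $\min_j\lambda_j(p)$ such that
\begin{equation}\label{e-plan1}
\tfrac{1}{k}\bigl|\pr_{x_{2n-1}}(v/w)(p)\bigr|\geq A_0,\qquad \tfrac{1}{k}\bigl|\pr_{z_s}(v/w)(p)\bigr|+\tfrac{1}{k}\bigl|\pr_{\ol z_s}(v/w)(p)\bigr|=O(\delta^2),
\end{equation}
and, for each $j\in\{1,\ldots,n-1\}$,
\begin{equation}\label{e-plan2}
\tfrac{1}{k}\bigl|\pr_{z_j}(v_j/w)(p)\bigr|\geq B_0\delta^{-1},\qquad \tfrac{1}{k}\bigl|\pr_\ell(v_j/w)(p)\bigr|=O(\delta^2)
\end{equation}
for every $\pr_\ell\in\{\pr_{x_{2n-1}},\,\pr_{\ol z_s}\ (1\leq s\leq n-1),\,\pr_{z_s}\ (s\neq j)\}$. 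The only delicate point is the lower bound in \eqref{e-plan1}: both summands $\pr_{x_{2n-1}}v/w$ and $(v/w)\pr_{x_{2n-1}}w/w$ have $\delta$-scale of order $k$, so one must check that the first dominates by estimating $|\pr_{x_{2n-1}}v(p)|\,|w(p)|\gtrsim k\delta^4$ against $|v(p)|\,|\pr_{x_{2n-1}}w(p)|\lesssim k\delta^5$. All other estimates in \eqref{e-plan1}--\eqref{e-plan2} follow by pure $\delta$-power counting.

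The injectivity of $d\Phi_{k,\delta,p}|_p$ then follows by a diagonal-dominance argument. Write $\xi=a\pr_{x_{2n-1}}+\sum_{s=1}^{n-1}(\zeta_s\pr_{z_s}+\ol\zeta_s\pr_{\ol z_s})\in T_pX$ with $a\in\Real$, $\zeta_s\in\Complex$, and assume $d\Phi_{k,\delta,p}|_p(\xi)=0$. Solving the equation $d(v_j/w)(\xi)=0$ for $\zeta_j$ against its dominant coefficient $\pr_{z_j}(v_j/w)(p)$ and using \eqref{e-plan2} yields $|\zeta_j|\leq C\delta^3\bigl(|a|+\sum_s|\zeta_s|\bigr)$ for a constant $C$ independent of $\delta$ and $k$. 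Summing over $j=1,\ldots,n-1$ and absorbing the $\sum_s|\zeta_s|$ term on the right for $\delta$ small gives $\sum_j|\zeta_j|\leq C'\delta^3|a|$. Plugging this into $d(v/w)(\xi)=0$ and using \eqref{e-plan1} produces $A_0|a|\leq C''\delta^5|a|$, forcing $a=0$ as soon as $\delta\leq\delta_0$ for some $\delta_0>0$; then $\zeta_j=0$ for every $j$ and hence $\xi=0$. Since all the constants depend continuously on $p$ through $c_p$, $d_p$ and $\min_j\lambda_j(p)$, which are strictly positive on the compact manifold $X$, the thresholds $\delta_0$ and $k_0$ may be chosen uniformly in $p$.
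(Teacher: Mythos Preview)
Your proof is correct and is precisely the ``straightforward computation'' the paper alludes to but omits: you invoke the estimates \eqref{e-gue140916VI} (applied with both $\delta$ and $\delta^2$) and \eqref{e-gue140916b}, feed them through the quotient rule, and read off injectivity of $d\Phi_{k,\delta,p}|_p$ by a diagonal-dominance argument. The $\delta$-power bookkeeping and the uniformity in $p$ via compactness are handled correctly.
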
 

Let $\dist (\cdot,\cdot)$ denote the Riemannian distance on $X$ and for $x\in X$ and $r>0$, 
put $B(x,r):=\set{y\in X;\, \dist (x,y)<r}$. From now on, we fix $k>k_0$ and $0<\delta<\delta_0$, 
where $k_0>0$ and $0<\delta_0<1$ are as in Lemma~\ref{l-gue140916II}. 
Since $X$ is compact there exists $r_k>0$ 
such that for every $x_0\in X$, $\Td u_{k,\delta^2,x_0}(x)\neq0$ for every 
$x\in B(x_0,2r_k)$ and the maps $\Phi_{k,\delta,x_0}$ and $d\Phi_{k,\delta,x_0}$
are injective on $B(x_0,2r_k)$. We can find $x_1,x_2,\ldots,x_{d_k}\in X$ such that
\begin{equation}\label{e-gue140917II}
X=B(x_1,r_k)\cup B(x_2,r_k)\cup\ldots\cup B(x_{d_k},r_k).
\end{equation}
For every $j=1,2,\ldots,d_k$, let 
$u_{k,\delta^2,x_j}, u_{k,\delta,x_j},u^1_{k,\delta,x_j},\ldots,u^{n-1}_{k,\delta,x_j}\in \cC^\ell(X,L^k)$
be as in \eqref{e-gue140916IV} and \eqref{e-gue140916a}. Consider the map
$\Phi_{k,\delta}:X\To\Complex\mathbb P^{(n+1)d_k-1}$,
\begin{equation}\label{e-gue140917III}
\begin{split}
\Phi_{k,\delta}\!=\!\big[u_{k,\delta^2,x_1},u_{k,\delta,x_1},u^1_{k,\delta,x_1},\ldots,
u^{n-1}_{k,\delta,x_1},\ldots,u_{k,\delta^2,x_{d_k}},
u_{k,\delta,x_{d_k}},u^1_{k,\delta,x_{d_k}},
\ldots,u^{n-1}_{k,\delta,x_{d_k}}\big].
\end{split}
\end{equation}
Let $q\in X$. Then, $q\in B(x_j,r_k)$ for some $j=1,2,\ldots,d_k$. From the discussion before \eqref{e-gue140917II}, we see that $u_{k,\delta^2,x_j}(q)\neq0$. Thus, $\Phi_{k,\delta}$ is well-defined as a $\cC^\ell$ map. 

\begin{thm}\label{t-gue140917}
With the notations above, the differential of $\Phi_{k,\delta}$ is injective at every $x\in X$ and for every $x_0, y_0\in X$ with $\dist (x_0,y_0)\leq\frac{r_k}{2}$, we have $\Phi_{k,\delta}(x_0)\neq\Phi_{k,\delta}(y_0)$.
\end{thm}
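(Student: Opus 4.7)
The plan is to reduce both assertions to the corresponding properties of the local maps $\Phi_{k,\delta,x_j}$, exploiting the finite cover $X=\bigcup_{j=1}^{d_k}B(x_j,r_k)$ and the two features built into the choice of $r_k$ above \eqref{e-gue140917II}: on each enlarged ball $B(x_j,2r_k)$ the function $\Td u_{k,\delta^2,x_j}$ is nowhere vanishing, and both $\Phi_{k,\delta,x_j}$ and $d\Phi_{k,\delta,x_j}$ are injective there. I would also shrink $r_k$ if necessary so that each $B(x_j,2r_k)$ lies inside a common trivializing neighbourhood of $L$ around $x_j$; this is harmless since only finitely many centres are involved.

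For the injectivity of $d\Phi_{k,\delta}$ at a point $x_0\in X$, I would pick $j$ with $x_0\in B(x_j,r_k)$. On a neighbourhood of $\Phi_{k,\delta}(x_0)$ in $\Complex\mathbb P^{(n+1)d_k-1}$ the homogeneous coordinate associated with $u_{k,\delta^2,x_j}$ is non-vanishing, so dividing by it supplies $n$ affine coordinates
\[
\frac{u_{k,\delta,x_j}}{u_{k,\delta^2,x_j}},\ \ \frac{u^1_{k,\delta,x_j}}{u_{k,\delta^2,x_j}},\ \ldots,\ \frac{u^{n-1}_{k,\delta,x_j}}{u_{k,\delta^2,x_j}}.
\]
Writing each global section as $u_{k,\delta,x_j}=U_{k,s}\Td u_{k,\delta,x_j}$ in the trivializing neighbourhood, the common factor $e^{k\phi}s^k$ cancels in these ratios, so the composition of $\Phi_{k,\delta}|_{B(x_j,r_k)}$ with the projection to these affine coordinates coincides exactly with $\Phi_{k,\delta,x_j}$ as defined in \eqref{e-gue140916bII}. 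Since $d\Phi_{k,\delta,x_j}$ is injective at $x_0\in B(x_j,2r_k)$ by the very choice of $r_k$, it follows that $d\Phi_{k,\delta}|_{x_0}$ is injective.

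For the separation claim, given $x_0,y_0\in X$ with ${\rm dist\,}(x_0,y_0)\leq r_k/2$, I would again pick $j$ with $x_0\in B(x_j,r_k)$; the triangle inequality then places $y_0\in B(x_j,3r_k/2)\subset B(x_j,2r_k)$, so $\Td u_{k,\delta^2,x_j}$ is non-zero at both $x_0$ and $y_0$. Assuming $\Phi_{k,\delta}(x_0)=\Phi_{k,\delta}(y_0)$, I would work in the fixed trivialization around $x_j$: the two arrays of local scalar values are then proportional, and dividing the $j$-th block at each point by $\Td u_{k,\delta^2,x_j}$ yields $\Phi_{k,\delta,x_j}(x_0)=\Phi_{k,\delta,x_j}(y_0)$. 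The injectivity of $\Phi_{k,\delta,x_j}$ on $B(x_j,2r_k)$ then forces $x_0=y_0$.

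The genuine difficulty is not in this theorem itself, which is a clean localization, but in the preceding Lemmas~\ref{l-gue140916}, \ref{l-gue140916I}, and \ref{l-gue140916II}, which produce CR sections with controlled one-jets yielding both the local non-vanishing of $\Td u_{k,\delta^2,p}$ and the local injectivity of $\Phi_{k,\delta,p}$ with injective differential. Within the present argument the only delicate point is the bookkeeping that translates equality in projective space into equality of the specific affine ratios defining $\Phi_{k,\delta,x_j}$; that translation is exactly what the non-vanishing of $\Td u_{k,\delta^2,x_j}$ on the enlarged ball $B(x_j,2r_k)$ makes available.
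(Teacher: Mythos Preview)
Your proof is correct and follows essentially the same approach as the paper: both arguments reduce to the local maps $\Phi_{k,\delta,x_j}$ by passing to affine coordinates via division by $u_{k,\delta^2,x_j}$, and then invoke the injectivity of $\Phi_{k,\delta,x_j}$ and $d\Phi_{k,\delta,x_j}$ on $B(x_j,2r_k)$ guaranteed by the choice of $r_k$. Your write-up is in fact slightly more careful about the bookkeeping (the cancellation of the trivializing factor $e^{k\phi}s^k$ in the ratios, and the need for $B(x_j,2r_k)$ to sit inside a trivializing chart), points the paper leaves implicit.
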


\begin{proof}
Let $q\in X$. Assume that $q\in B(x_1,r_k)$. Then, $u_{k,\delta^2,x_1}(q)\neq0$. On $B(x_1,r_k)$, consider the map $\Psi:B(x_1,r_k)\To\Complex^{(n+1)d_k-1}$,
\begin{equation}\label{e-gue140917IV}
\begin{split}
\Psi=\Bigl(\frac{u_{k,\delta,x_1}}{u_{k,\delta^2,x_1}},
\frac{u^1_{k,\delta,x_1}}{u_{k,\delta^2,x_1}},\ldots,
\frac{u^{n-1}_{k,\delta,x_1}}{u_{k,\delta^2,x_1}},
\ldots,\frac{u_{k,\delta^2,x_{d_k}}}{u_{k,\delta^2,x_1}},
\frac{u_{k,\delta,x_{d_k}}}{u_{k,\delta^2,x_1}},
\frac{u^1_{k,\delta,x_{d_k}}}{u_{k,\delta^2,x_1}},
\ldots,\frac{u^{n-1}_{k,\delta,x_{d_k}}}{u_{k,\delta^2,x_1}}\Bigr).
\end{split}
\end{equation}
From the discussion before \eqref{e-gue140917II}, we see that $d\Phi_{k,\delta,x_1}$ is injective on $B(x_1,2r_k)$. 
Thus, $d\Psi$ is injective at $q$ and hence $d\Phi_{k,\delta}$ is injective at $q$. 

Let $x_0, y_0\in X$ with $\dist (x_0,y_0)\leq\frac{r_k}{2}$. We may assume that $x_0\in B(x_1,r_k)$. Thus, $x_0, y_0\in B(x_1,2r_k)$. From the discussion before \eqref{e-gue140917II}, we see that $\Phi_{k,\delta,x_1}$ is injective on $B(x_1,2r_k)$. Hence, 
\begin{equation}\label{e-gue140917V}
\Phi_{k,\delta,x_1}(x_0)\neq\Phi_{k,\delta,x_1}(y_0).
\end{equation}
By the definition \eqref{e-gue140916bII} of $\Phi_{k,\delta,x_1}$, relation
\eqref{e-gue140917V} implies that $\Phi_{k,\delta}(x_0)\neq\Phi_{k,\delta}(y_0)$. 
The lemma follows. 
\end{proof}


Let $s$ be a local trivializing section of $L$ on an open set $D\subset X$. As before, we fix 
$p\in D$ and let $x=(x_1,\ldots,x_{2n-1})$, $z_j=x_{2j-1}+ix_{2j}$, $j=1,\ldots,n-1$, 
be local coordinates of $X$ defined in some small neighbourhood of $p$ such that \eqref{e-gue140826m} hold. 
We may assume that the local coordinates $x$ defined on $D$. Take $m>N_\ell$ be a large constant and let 
$u_{m,\delta,p}$ be as in \eqref{e-gue140916IV}. On $D$, we write 
$u_{m,\delta,p}=U_{k,s}\Td u_{m,\delta,p}$, $\Td u_{m,\delta,p}\in \cC^\ell(D)$.
Put $D_{p,m}:=\set{x=(x_1,\ldots,x_{2n-1});\, \abs{x}<\frac{1}{m\log m}}$. We need the following.

\begin{lem}\label{l-gue140919}
With the notations above, there exists $m_0>0$ such that
$r_km_0^{1/3}>4$ and for all $m\geq m_0$ and $p\in X$, 
\begin{equation}\label{e-gue140919II}
\inf\set{\abs{u_{m,\delta,p}(x)}_{h^{m}};\, x\in D_{p,m}}\geq\frac{1}{8}\delta c_p,
\end{equation}
where $c_p=\frac{1}{2}\pi^{-n}\abs{\det R^L_p}\int\boldsymbol{\chi}(y)dy$, and 
for every $q\in X$ with $\dist (q,x)\geq\frac{r_k}{4}$, for all $x\in D_{p,m}$, we have
\begin{equation}\label{e-gue140919III}
\abs{u_{m,\delta,p}(q)}_{h^{m}}\leq\frac{1}{2}\inf\set{\abs{u_{m,\delta,p}(x)}_{h^{m}};\, x\in D_{p,m}},
\end{equation}
where $r_k>0$ is as in  Theorem~\ref{t-gue140917}. 
\end{lem}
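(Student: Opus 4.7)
The plan is to reduce both estimates to bounds on $|\widehat u_{m,\delta,p}|$. Observe that $U_{m,s}$ is pointwise norm-preserving, since $|s^{m}|_{h^{m}}=e^{-m\phi}$ cancels the factor $e^{m\phi}$ appearing in \eqref{isom}; combined with the fact that $\exp\bigl(-m\sum_{j}(\alpha_j z_j-\ol{\alpha_j}\ol{z_j})\bigr)$ has unit modulus (its exponent $\alpha_j z_j-\ol{\alpha_j}\ol{z_j}=2i\,{\rm Im}\,(\alpha_j z_j)$ being purely imaginary), we obtain
\[
|u_{m,\delta,p}(x)|_{h^{m}}=|\Td u_{m,\delta,p}(x)|=|\widehat u_{m,\delta,p}(x)|,\qquad x\in D.
\]
Hence \eqref{e-gue140919II} and \eqref{e-gue140919III} reduce to lower and upper bounds on $|\widehat u_{m,\delta,p}|$ on $D_{p,m}$ and at points far from $D_{p,m}$, respectively.

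For \eqref{e-gue140919II} the strategy is a first-order Taylor expansion on the shrinking set $D_{p,m}$. In the integral representation \eqref{e-gue140916V}, $y$ ranges over a fixed compact set (the support of the $\chi$'s), $u$ over $(-M,M)$, the amplitude $m^{-n}a_\delta(x,F^*_m y,u,m)$ is uniformly bounded in $(x,p,m)$, and $|e^{im\psi_0(x,F^*_m y,u)}|$ is uniformly bounded on the region of integration (because $m|w|^{2}=O(1)$ after the scaling $w=F_m^* y$). Each differentiation in $x$ brings down only an $O(m)$ factor from $e^{im\psi_0}$, yielding $|\nabla_x\widehat u_{m,\delta,p}|=O(m)$ uniformly for $x$ in a fixed small neighborhood of $p$. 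The fundamental theorem of calculus therefore gives, for $x\in D_{p,m}$,
\[
|\widehat u_{m,\delta,p}(x)-\widehat u_{m,\delta,p}(p)|\leq Cm|x|\leq C/\log m.
\]
A slight refinement of the asymptotic computation in the proof of Lemma~\ref{l-gue140916} shows $\lim_{m\to\infty}|\widehat u_{m,\delta,p}(p)|\geq \delta c_p/4$ (from $\int\tau(u/\delta)\,du\geq\delta/4$); combined with $c_p\geq c_0>0$ uniformly on $X$ by compactness, choosing $m_0$ sufficiently large and satisfying $r_km_0^{1/3}>4$ yields \eqref{e-gue140919II} uniformly in $p$.

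For \eqref{e-gue140919III}, the source function in \eqref{e-gue140916IV} has support in a ball of radius $O(1/\sqrt{m})$ about $p$ in local coordinates, and since $\mathcal{A}_{m,\delta}$ is properly supported the image $U_{m,s}\mathcal{A}_{m,\delta}(\cdot)$ of this source stays supported in a small fixed ball about $p$ for $m$ large. For any $q$ with ${\rm dist\,}(q,D_{p,m})\geq r_k/4$ we pick cutoffs $\chi_0\in\cC^\infty_0(D)$ equal to $1$ on this support and $\Td\chi_0\in\cC^\infty(X)$ equal to $1$ near $q$, with disjoint supports (possible for $m$ large). Theorem~\ref{t-gue140912}(i) then yields $\Td\chi_0\,\Pi_m\,\chi_0=O(m^{-\infty}):\cC^\infty(X,L^{m})\to\cC^\ell(X,L^{m})$. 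Since the $\cC^M$-norm of the source grows only polynomially in $m$, we conclude $|u_{m,\delta,p}(q)|_{h^{m}}=O(m^{-\infty})$ uniformly in $p$ and $q$, which is eventually smaller than $\frac{1}{16}\delta c_0$, proving \eqref{e-gue140919III}. The principal obstacle is that the adapted coordinates \eqref{e-gue140826m} depend on $p$, so a priori all constants do too; this is handled by a finite cover of $X$ by trivializing charts in which the relevant quantities vary smoothly in $p$ and are uniformly bounded by compactness.
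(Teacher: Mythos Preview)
Your argument for \eqref{e-gue140919II} is correct and in the same spirit as the paper's, made explicit via the gradient bound and Taylor's theorem.

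For \eqref{e-gue140919III} there is a gap. You write that ``since $\mathcal{A}_{m,\delta}$ is properly supported the image $U_{m,s}\mathcal{A}_{m,\delta}(\cdot)$ of this source stays supported in a small fixed ball about $p$.'' Properly supported only ensures that compactly supported inputs give compactly supported outputs; it says nothing about the \emph{size} of the output support, which a priori could be all of $D$. The repair is standard: the distributional kernel of a semi-classical pseudodifferential operator is $O(m^{-\infty})$ off the diagonal, so one may replace $\mathcal{A}_{m,\delta}$ by a representative whose kernel is supported in $\{|x-y|<r_k/16\}$ at the cost of an $O(m^{-\infty})$ smoothing error, which is harmless since $\Pi_m U_{m,s}$ grows only polynomially in $m$ (Theorem~\ref{t-gue141029I}). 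You must also make the appeal to Theorem~\ref{t-gue140912} uniform in $p$ and $q$: as stated there, the constants in the $O(m^{-\infty})$ bound depend on the specific cutoffs, so you need a fixed finite family of cutoff pairs (from a finite cover of $X$ by balls of radius comparable to $r_k$) rather than cutoffs chosen anew for each $(p,q)$.

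Once repaired, your route for \eqref{e-gue140919III} differs from the paper's. The paper invokes the off-diagonal decay of $\Pi_m$ only when $q\notin D$; for $q\in D$ it analyses the oscillatory integral \eqref{e-gue140919I} directly with the phase information from Theorem~\ref{t-phase}. If $|q'|\geq \tfrac{1}{8}m^{-1/3}/\log m$, the lower bound ${\rm Im}\,\psi\geq c|x'-y'|^2$ from \eqref{e-dgugeXI} forces $m\,{\rm Im}\,\psi_0\geq c'm^{1/3}/(\log m)^2$, hence exponential decay of the integrand; if instead $|q_{2n-1}|\geq\tfrac{1}{8}m^{-1/3}$ with $|q'|$ small, the Taylor expansion \eqref{e-gue140826Im} shows $|\partial_u\psi_0|$ is bounded below by a constant times $|q_{2n-1}|$, and repeated integration by parts in $u$ yields the decay. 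Your approach trades this phase analysis for a black-box appeal to the off-diagonal decay of $\Pi_m$; it is more economical but hides which structural features of $\psi$ are responsible.
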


\begin{proof}
Let $m>N_\ell$ be large enough so that
\begin{equation}\label{e-gue140919}
r_km^{1/3}>4.
\end{equation}
As in \eqref{e-gue140916V}, we have $\operatorname{mod} O(m^{-\infty})$ in $\cC^\ell(D)$
\begin{equation}\label{e-gue140919I}
\begin{split}
\Td u_{m,\delta,p}(x)\equiv\int e^{im\psi_0(x,F^*_my,u)}m^{-n}a_\delta(x,F^*_my,u,m)
\boldsymbol{\chi}(y)dudy. 
\end{split}
\end{equation}
From \eqref{e-gue140919I}, we can repeat the proof of the first formula of \eqref{e-gue140916VI} 
with minor changes and get \eqref{e-gue140919II}. We only need to prove \eqref{e-gue140919III}. 
Let $q\in X$ with $\dist (q,x)\geq\frac{r_k}{4}$, for all $x\in D_{p,m}$. 
If $q\notin D$, from (i) in Theorem~\ref{t-gue150129}, we can check that $\abs{u_{m,\delta,p}(q)}_{h^{m}}=O(m^{-\infty})$.

We may thus assume that $q\in D$. 
For simplicity, we may suppose that $\operatorname{dist}(x_1,x_2)=\abs{x_1-x_2}$ on $D$. 
We write $q=(q_1,\ldots,q_{2n-1})$. Since $\dist (q,x)\geq\frac{r_k}{4}$, for all $x\in D_{p,m}$, 
from \eqref{e-gue140919}, we have $\abs{q}\geq\frac{1}{4m^{1/3}}$ for $m$ large. 
Thus, $\abs{q'}\geq\frac{1}{8m^{1/3}\log m}$ or $\abs{q_{2n-1}}\geq\frac{1}{8m^{1/3}}$, 
where $q'=(q_1,\ldots,q_{2n-2})$. If $\abs{q'}\geq\frac{1}{8m^{1/3}\log m}$, by using the fact that 
$m\,{\rm Im\,}\psi_0(q,F^*_my,u)\geq cm^{1/3}\frac{1}{(\log m)^2}$, 
$\forall y\in\supp\boldsymbol{\chi}(y)$,
where $c>0$ is a constant independent of $m$, we conclude that
\begin{equation}\label{e-gue140919a}
\abs{\Td u_{m,\delta,p}(q)}=O(m^{-\infty})\,,\:\: \text{if $\abs{q'}\geq\frac{1}{8m^{1/3}\log m}\,\cdot$}
\end{equation}
If $\abs{q_{2n-1}}\geq\frac{1}{8m^{1/3}}$ and $\abs{q'}<\frac{1}{8m^{1/3}\log m}$, 
from \eqref{e-gue140826Im}, we can integrate by parts with respect to $u$ several times and conclude that 
\begin{equation}\label{e-gue140919aI}
\abs{\Td u_{m,\delta,p}(q)}=O(m^{-\infty})\,,\:\: \text{if 
$\abs{q_{2n-1}}\geq\frac{1}{8m^{1/3}\log m}$ and $\abs{q'}<\frac{1}{8m^{1/3}\log m}\,\cdot$}
\end{equation}
From \eqref{e-gue140919a} and \eqref{e-gue140919aI}, \eqref{e-gue140919III} follows. 
\end{proof} 

Now, we fix $m\geq N_\ell+m_0$, where $m_0$ is as Lemma~\ref{l-gue140919}. From Lemma~\ref{l-gue140919}, we see that we can find $x_1\in X, x_2\in X,\ldots,x_{d_m}\in X$ such that $X=\bigcup^{d_m}_{j=1}U_{x_j,m}$, where for each $j$, $U_{x_j,m}$ is an open neighbourhood of $x_j$ with $\sup\{\dist (q_1,q_2);\, q_1, q_2\in U_{x_j,m}\}<\frac{r_k}{4}$, and for each $j$, we can find a $\cC^\ell$ global CR section $u_{m,\delta,x_j}$ such that 
\begin{equation}\label{e-gue140919b}
\inf\set{\abs{u_{m,\delta,x_j}(x)}_{h^{m}};\, x\in U_{x_j,m}}>0,
\end{equation}
and 
for every $q\in X$ with $\dist (q,x)\geq\frac{r_k}{4}$, for all $x\in U_{x_j,m}$, we have
\begin{equation}\label{e-gue140919bI}
\abs{u_{m,\delta,x_j}(q)}_{h^{m}}\leq\frac{1}{2}\inf\set{\abs{u_{m,\delta,x_j}(x)}_{h^{m}};\, x\in U_{x_j,m}},
\end{equation}
where $r_k>0$ is as in Theorem~\ref{t-gue140917}. Consider the map:
\begin{equation}\label{e-gue140919bII}
\begin{split}
\Psi_{m,\delta}:X\To\Complex\mathbb P^{d_m-1},\:\:
x\longmapsto[u_{m,\delta,x_1},u_{m,\delta,x_2},\ldots,u_{m,\delta,x_{d_m}}](x).
\end{split}
\end{equation}
Let $q\in X$. Then, $q\in U_{x_j,m}$ for some $j=1,2,\ldots,d_m$. In view of \eqref{e-gue140919b}, we see that $u_{m,\delta,x_j}(q)\neq0$. Thus, $\Psi_{m,\delta}$ is well-defined as a smooth map.
\begin{thm}\label{t-gue140919}
The map $(\Phi_{k,\delta},\Psi_{m,\delta}):X\To\Complex\mathbb P^{(n+1)d_k-1}\times\Complex\mathbb P^{d_m-1}$
is a $\cC^\ell$ CR embedding, where $\Phi_{k,\delta}$ is given by \eqref{e-gue140917III}
\end{thm}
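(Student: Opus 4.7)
The plan is to combine the two complementary properties already established: $\Phi_{k,\delta}$ is an immersion everywhere on $X$ and separates pairs at distance $\leq r_k/2$ (Theorem~\ref{t-gue140917}), while the peak-type estimates \eqref{e-gue140919b}--\eqref{e-gue140919bI} for $\Psi_{m,\delta}$ will handle all remaining pairs. Since each projective coordinate of $(\Phi_{k,\delta},\Psi_{m,\delta})$ is a $\cC^\ell$ CR section of $L^k$ or $L^m$, the combined map is automatically $\cC^\ell$ and CR; and injectivity of the differential of $\Phi_{k,\delta}$ at every point (Theorem~\ref{t-gue140917}) immediately yields injectivity of the differential of the product map.

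To prove injectivity on points, fix $x_0\neq y_0\in X$. If ${\rm dist\,}(x_0,y_0)\leq r_k/2$, Theorem~\ref{t-gue140917} gives $\Phi_{k,\delta}(x_0)\neq\Phi_{k,\delta}(y_0)$. In the remaining case ${\rm dist\,}(x_0,y_0)>r_k/2$, I will argue by contradiction: if $\Psi_{m,\delta}(x_0)=\Psi_{m,\delta}(y_0)$, then fixing unit-length local trivializations of $L^m$ at the two points, the projective equality produces a single positive constant $c$, independent of $j$, such that $|u_{m,\delta,x_j}(y_0)|_{h^m}=c\,|u_{m,\delta,x_j}(x_0)|_{h^m}$ for every $j\in\{1,\dots,d_m\}$. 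Picking $j$ with $x_0\in U_{x_j,m}$, the diameter bound on $U_{x_j,m}$ together with the triangle inequality forces ${\rm dist\,}(y_0,x)\geq r_k/4$ for all $x\in U_{x_j,m}$, and \eqref{e-gue140919bI} then gives $|u_{m,\delta,x_j}(y_0)|_{h^m}\leq\tfrac12|u_{m,\delta,x_j}(x_0)|_{h^m}$, so $c\leq 1/2$. Swapping $x_0$ and $y_0$ and choosing $j'$ with $y_0\in U_{x_{j'},m}$ symmetrically gives $c\geq 2$, a contradiction.

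The combined map will therefore be an injective $\cC^\ell$ CR immersion of the compact manifold $X$, hence a $\cC^\ell$ CR embedding. I expect the main subtlety to be the translation of the projective equality $\Psi_{m,\delta}(x_0)=\Psi_{m,\delta}(y_0)$ into a single scalar $c$ valid for every index $j$: one has to pick unit-length trivializations at $x_0$ and $y_0$ so that pointwise $h^m$-norms of the sections equal the absolute values of their projective coordinates, after which the peak estimate \eqref{e-gue140919bI} applied at both endpoints produces the contradictory constraints $c\leq 1/2$ and $c\geq 2$.
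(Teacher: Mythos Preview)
Your proof is correct and follows essentially the same strategy as the paper: immersivity comes from Theorem~\ref{t-gue140917}, near pairs are separated by $\Phi_{k,\delta}$, and far pairs are separated by $\Psi_{m,\delta}$ via the peak estimates \eqref{e-gue140919b}--\eqref{e-gue140919bI}. The paper leaves the last step as ``straightforward to check'', whereas you spell out the contradiction via the common scalar $c$; your choice of the threshold $r_k/2$ (matching Theorem~\ref{t-gue140917}) rather than the paper's $r_k/4$ in fact makes the triangle-inequality step ${\rm dist}(y_0,x)>r_k/2-r_k/4=r_k/4$ cleanly compatible with the hypothesis of \eqref{e-gue140919bI}.
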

\begin{proof}
In view of Theorem~\ref{t-gue140917}, we only need to show that $(\Phi_{k,\delta},\Psi_{m,\delta})$ is injective. 
Let $q_1, q_2\in X$, $q_1\neq q_2$. Assume first that $\dist (q_1,q_2)\leq\frac{r_k}{4}$. 
From Theorem~\ref{t-gue140917}, we know that $\Phi_{k,\delta}(q_1)\neq\Phi_{k,\delta}(q_2)$ 
and hence $(\Phi_{k,\delta}(q_1),\Psi_{m,\delta}(q_1))\neq(\Phi_{k,\delta}(q_2),\Psi_{m,\delta}(q_2))$. 
We assume that $\dist (q_1,q_2)>\frac{r_k}{4}$. From \eqref{e-gue140919bI}, 
it is straightforward to check that $\Psi_{m,\delta}(q_1)\neq\Psi_{m,\delta}(q_2)$ and thus 
$(\Phi_{k,\delta}(q_1),\Psi_{m,\delta}(q_1))\neq(\Phi_{k,\delta}(q_2),\Psi_{m,\delta}(q_2))$. The theorem follows.
\end{proof}

Note that $\Phi_{k,\delta}$ are defined by collecting many local embedding CR maps and it is difficult to show that $\Phi_{k,\delta}$ is injective on $X$. 

\begin{proof}[Proof of Theorem~\ref{t-embleviflat}]
With the notations above, consider the Segre map
\begin{equation}\label{e:segre}
\begin{split}
\Upsilon:\Complex\mathbb P^{(n+1)d_k-1}\times\Complex\mathbb P^{d_m-1}
&\To\Complex\mathbb P^{(n+1)d_kd_m-1},\\
([z_1,\ldots,z_{(n+1)d_k}],[w_1,\ldots,w_{d_m}])
&\To[z_1w_1,z_1w_2,\ldots,z_1w_{d_m},z_2w_1,\ldots,z_{(n+1)d_k}w_{d_m}],
\end{split}
\end{equation}
which is a holomorphic embedding. By
Theorem~\ref{t-gue140919}, we deduce that
\[\Upsilon\circ(\Phi_{k,\delta},\Psi_{m,\delta}):X\To\Complex\mathbb P^{(n+1)d_kd_m-1},\]
is a $\cC^\ell$ CR embedding. We have proved that for every $M\geq k+N_\ell+m_0$,
we can find CR sections $s_0, s_1,\ldots,s_{d_M}\in \cC^\ell(X,L^M)$, 
such that the map $x\in X\To [s_0(x),s_1(x),\ldots,s_{d_M}(x)]\in\Complex\mathbb P^{d_M}$ is an embedding. 
Theorem~\ref{t-embleviflat} follows. 
\end{proof}

\section*{Acknowledgements}
We are grateful to Masanori Adachi and Xiaoshan Li for several useful conversations.
We would like to thank the referees for their insightful and stimulating reports.

The first-named author was partially supported by Taiwan Ministry of Science of Technology project 
103-2115-M-001-001, 104-2628-M-001-003-MY2 and the Golden-Jade fellowship of Kenda Foundation.
The second-named author was partially supported by the DFG project 
SFB TRR 191 and Universit\'e Paris 7.


\def\cprime{$'$}

\end{document}